\theoremstyle{plain}
\newtheorem{property}{Property}
\newtheorem{theorem}{Theorem}
\newtheorem{lemma}{Lemma}
\newtheorem{cor}{Corollary}
\newtheorem{rem}{Remark}
\newtheorem{ex}{Example}
\theoremstyle{definition}
\newtheorem{definition}{Definition}
\DeclareMathOperator{\tr}{tr}
\DeclareMathOperator{\df}{df}
\DeclareMathOperator{\vect}{vec}
\DeclareMathOperator{\diag}{Diag}
\DeclareMathOperator*{\argmin}{argmin}
\DeclareMathOperator*{\converge}{\longrightarrow}
\newcommand{\R}{\mathbb{R}}
\newcommand{\NN}{\mathbb{N}}
\newcommand{\PP}{\mathbb{P}}
\newcommand{\Proba}{\mathbb{P}}
\newcommand{\LL}{L^2(\PP)}
\newcommand{\X}{\mathcal{X}}
\newcommand{\C}{\mathcal{C}}
\newcommand{\F}{\mathcal{F}}
\newcommand{\D}{\mathcal{D}}
\newcommand{\N}{\mathcal{N}}
\newcommand{\M}{\mathcal{M}}
\renewcommand{\P}{\mathcal{P}}
\newcommand{\e}{\varepsilon}
\newcommand{\s}{\sigma}
\newcommand{\g}{\gamma}
\renewcommand{\a}{\alpha}
\renewcommand{\th}{\theta}
\renewcommand{\b}{\beta}
\renewcommand{\d}{\delta}
\renewcommand{\S}{\Sigma}
\renewcommand{\l}{\lambda}
\renewcommand{\hat}[1]{\widehat{#1}}
\renewcommand{\tilde}[1]{\widetilde{#1}}
\newcommand{\esp}[1]{\mathbb{E} \left[ #1 \right]}
\newcommand{\empesp}[1]{\hat{\mathbb{E}} \left[ #1 \right]}
\newcommand{\linspan}[1]{\textrm{span} \left\{ #1 \right\}}
\newcommand{\norr}[1]{\left\| #1 \right\|_2 }
\newcommand{\paren}[1]{\left( #1 \right)} 
\newcommand{\croch}[1]{\left[ #1 \right]} 
\newcommand{\set}[1]{\left\{ #1 \right\}}
\newcommand{\absj}[1]{\left\lvert #1 \right\rvert} 
\providecommand{\norm}[1]{\left \lVert #1 \right\rVert}
\newcommand{\scal}[2]{\langle #1 , #2 \rangle}
\newcommand{\egaldef}{:=}
\newcommand{\trsp}{^{\top}}
\newcommand{\pt}{\enspace .}
\newcommand{\virg}{\enspace ,}
\newcommand{\ptvirg}{\enspace ;}
\newcommand{\et}{~\textrm{ and }~}
\newcommand{\fh}{\widehat{f}}
\newcommand{\Fh}{\widehat{F}}
\newcommand{\Mh}{\widehat{M}}
\newcommand{\MT}{\mathrm{MT}}
\newcommand{\ST}{\mathrm{ST}}
\newcommand{\SD}{\mathrm{SD}}
\newcommand{\AV}{\mathrm{AV}}
\newcommand{\MSD}{M_{\SD}}
\newcommand{\MAV}{M_{\AV}}
\newcommand{\HK}{$\textrm{H}_{\textrm{K}}(\b)$}
\newcommand{\HypM}{$\textrm{H}_{\textrm{M}}(\b,\d)$}
\newcommand{\HAV}{$\textrm{H}_{\textrm{AV}}(\d,C_1,C_2)$}
\newcommand{\kbd}{\kappa(\b,\d)}
\newcommand{\matheq}[1]{$\displaystyle{#1}$}
\newcommand{\setn}{\{1,\dots,n\}}
\newcommand{\setp}{\{1,\dots,p\}}
\begin{document}
  \begin{frontmatter}
  \title{Comparison bewteen multi-task and single-task oracle risks in kernel ridge regression}
  \runtitle{Comparison between multi-task and single-task oracles}
  \begin{aug}
    \author{\fnms{Matthieu} \snm{Solnon} \ead[label=e1]{matthieu.solnon@ens.fr}}
    \address{ ENS; Sierra Project-team\\
      D\'epartement d'Informatique de l'\'Ecole Normale Sup\'erieure \\
      (CNRS/ENS/INRIA UMR 8548)\\
      23, avenue d'Italie, CS 81321 \\
      75214 Paris Cedex 13, France \\
      \printead{e1}}
    \runauthor{M. Solnon}
    \affiliation{DI-ENS}
  \end{aug}
  \begin{abstract}
    In this paper we study multi-task kernel ridge regression and try to understand when the multi-task procedure performs better than the single-task one, in terms of averaged quadratic risk. 
    In order to do so, we compare  the risks of the estimators with perfect calibration, the \emph{oracle risk}.
    We are able to give explicit settings, favorable to the multi-task procedure, where the multi-task oracle performs better than the single-task one.
    In situations where the multi-task procedure is conjectured to perform badly, we also show the oracle does so. 
    We then complete our study with simulated examples, where we can compare both oracle risks in more natural situations. 
    A consequence of our result is that the multi-task ridge \emph{estimator} has a lower risk than any single-task estimator, in favorable situations. 	
  \end{abstract}
\begin{keyword}[class=MSC]
\kwd[Primary ]{62H05}
\kwd[; secondary ]{62C25}
\kwd{62G08}
\kwd{62J07}
\kwd{68Q32}
\end{keyword}
\begin{keyword}
\kwd{Kernel methods}
\kwd{Multi-task}
\kwd{Oracle risk}
\kwd{Ridge regression}
\end{keyword}
\tableofcontents
\end{frontmatter}

\section{Introduction}

Increasing the sample size is the most common way to improve the performance of  statistical estimators. 
In some cases (see, for instance, the experiments of \citet{Evgeniou} on customer data analysis or those of  \citet{jacob:clustered} on molecule binding problems), having access to some new data may be impossible, often due to experimental limitations.
One way to circumvent those constraints is to use datasets from several related (and, hopefully, ``similar'') problems, as if it gave additional (in some sense) observations on the initial problem. 
The statistical methods using this heuristic are called ``multi-task'' techniques, as opposed to ``single-task'' techniques, where every problem is treated one at a time. 
In this paper, we study kernel  ridge regression in a multi-task framework and try to understand when multi-task can improve over single-task.

The first trace of a multi-task estimator can be found in the work of \citet{stein1956inadmissibility}. 
In this article, Charles Stein showed that the usual maximum-likelihood estimator of the mean of a Gaussian vector (of dimension larger than 3, every dimension representing here a task) is not admissible---that is, there exists another estimator that has a lower risk for every parameter. 
He showed the existence of an estimator that uniformly attains a  lower quadratic risk by shrinking the estimators along the different dimensions towards an arbitrary point. 
An explicit form of such an estimator was given by \citet{james1961estimation}, yielding the famous James-Stein estimator.  
This phenomenon, now known as the ``Stein's paradox'', was widely studied in the following years and the behaviour of this estimator was confirmed by empirical studies, in particular the one from \citet{efron1977stein}. 
This first example clearly shows the goals of the multi-task procedure: an advantage is gained by borrowing information from different tasks (here, by shrinking the estimators along the different dimensions towards a common point), the improvement being scored by the global (averaged) squared risk. 
Therefore, this procedure does not guarantee individual gains on every task, but a global improvement on the sum of those task-wise risks. 

\medskip

We consider here $p \geq 2$ different regression tasks, a framework we refer to as ``multi-task'' regression, and where the performance of the estimators is measured by the fixed-design quadratic risk.
Kernel ridge regression is a classical framework to work with and comes with a natural norm, which often has desirable properties (such as, for instance, links with regularity).
This norm is also a natural ``similarity measure'' between the regression functions. 
\citet{Evgeniou} showed how to extend kernel ridge regression to a multi-task setting, by adding a regularization term that binds the regression functions along the different tasks together. 
One of the main questions that is asked is to assert whether the multi-task estimator has a lower risk than any single-task estimator. 
It was recently proved by \citet{solnon:hal-00610534} that a fully data-driven calibration of this procedure is possible, given some assumptions on the set of matrices used to regularize---which correspond to prior knowledge on the tasks. 
Under those assumptions, the estimator is showed to verify an \emph{oracle inequality}, that is, its risk matches (up to constants) the best possible one, the \emph{oracle risk}.
Thus, it suffices to compare the oracle risks for the multi-task procedure and the single-task one to provide an answer to this question.

\medskip

The multi-task regression setting, which could also be called ``multivariate regression'',  has already been studied in different papers. 
It was first introduced by  \citet{Brown_Zidek_1980} in the case of ridge regression, and then adapted by \citet{Evgeniou} in its kernel form. 
Another view of the meaning of ``task similarity'' is that the functions all share a few common features, and can be expressed by a similar regularization term. 
This idea was expressed in a linear set up (also known as group lasso) by \citet{Obozinski_Wainwright_Jordan_2011} and \citet{Lounici:1277345}, in multiple kernel learning by \citet{koltchinskii2010sparsity} or in semi-supervised learning by \citet{Ando:2005:FLP:1046920.1194905}.
The kernel version of this was also studied \citep{DBLP:journals/ml/ArgyriouEP08,jacob:clustered}, a convex relaxation leading to a trace norm regularization and allowing the calibration of parameters. 
Another point of view was brought by \citet{ben2003exploiting}, defining a multi-task framework in classification, two classification problems being similar if, given a group of permutations of the input set, a dataset of the one can be permuted in a dataset of the other. 
They followed the analysis of \citet{Baxter2000}, which shows very general bounds on the risk of a multi-task estimator in a model-selection framework, the sets of all models reflecting the insight the statistician has on the multi-task setting.   

\medskip

Advantages of the multi-task procedure over the single task one were first shown experimentally in various situations  by, for instance, \citet{Thrun96g}, \citet{Caruana:1997:ML:262868.262872} or \citet{Bakker:2003:TCG:945365.945370}. 
For classification, \citet{ben2003exploiting} compare upper bounds on multi-task and single-task classification errors, and showed that the multi-task estimator could, in some settings, need less training data to reach the same upper bounds. 
The low dimensional linear regression setting was analysed by \citet{2009arXiv0912.5338R}, who showed that, under sparsity assumptions, restricted isometry conditions and using the trace-norm regularization, the multi-task estimator achieves the rates of a single-task estimator with a $np$-sample.
\citet{liang10regularization} also obtained a theoretical  criterion, applicable to the linear regression setting and unfortunately non observable, which tells when the multi-task estimator asymptotically has a lower risk than the lower one. 
A step was recently carried by \citet{FeldmanGF12MTav} in a kernel setting where every function is estimated by a constant. 
They give a closed-form expression of the oracle for two tasks and run simulations to compare the risk of the multi-task estimator to the risk of the single-task estimator. 

\medskip

In this article we study the oracle multi-task risk and compare it to the oracle single-task risk. 
We then find situations where the multi-task oracle is proved to have a lower risk than the single-task oracle. 
This allows us to better understand which situation favors the multi-task procedure and which does not. 
After having defined our model (Section~\ref{sec.model}), we write down the risk of a general multi-task ridge estimator and see that it admits a convenient decomposition using two key elements: the mean of the tasks and the resulting variance (Section~\ref{sec.risk}). 
This decomposition allows us to optimize this risk and get a precise estimation of the oracle risk, in settings where the ridge estimator is known to be minimax optimal (Section~\ref{sec.MTrisk}). 
We then explore several repartitions of the tasks that give the latter multi-task rates, study their single-task oracle risk (Section~\ref{sec.STrisk}) and compare it to their respective multi-task rates. 
This allows us to discriminate several situations, depending whether the multi-task oracle either outperforms its single-task counterpart, underperforms it or whether both behave similarly (Section~\ref{sec.MTSTcomp}). 
We also show that, in the cases favorable to the multi-task oracle detailed in the previous sections, the estimator proposed by \citet{solnon:hal-00610534} behaves accordingly and achieves a lower risk than the single-task oracle (Section~\ref{sec.riskest}). 
We finally study settings where we can no longer explicitly study the oracle risk, by running simulations, and we show that the multi-task oracle continues to retain the same virtues and disadvantages as before (Section~\ref{sec.simus}).    

\medskip

\noindent We now introduce some notations, which will be used throughout the article.

\begin{itemize}
\item The integer $n$ is the sample size, the integer $p$ is the number of tasks.
\item For any $n \times p$ matrix $Y$, we define
\begin{equation*}
   y = \vect(Y) \egaldef \paren{ Y_{1,1}, \ldots, Y_{n,1}, Y_{1,2}, \ldots, Y_{n,2}, \ldots, Y_{1,p}, \ldots, Y_{n,p} } \in \R^{np}, 
\end{equation*}
that is, the vector in which the columns $Y^j \egaldef (Y_{i,j})_{1 \leq i \leq n}$ are stacked.
\item $\mathcal{M}_n(\R)$ is the set of all real square matrices of size $n$.
\item $\mathcal{S}_p(\R)$ is the set of symmetric matrices of size $p$.
\item $\mathcal{S}_p^+(\R)$ is the set of  symmetric positive-semidefinite matrices of size $p$.
\item $\mathcal{S}_p^{++}(\R)$ is the set of symmetric positive-definite matrices of size $p$.
\item $\mathbf{1}$ is the vector of size $p$ whose components are all equal to $1$.
\item $\norr{\cdot}$ is the usual Euclidean norm on $\R^{k}$ for any $k \in \NN$: $\forall u \in \R^k$, $\norr{u}^2 \egaldef \sum_{i=1}^k u_i^2$.
\item For two real sequences $(u_n)$ and $(v_n)$ we write $u_n \asymp v_n$ if there exists positive constants $\ell$ and $L$ such that, for a large enough $n$, $\ell v_n \leq u_n \leq L v_n$.
\item For $(a,b)\in\R^2$, $a\wedge b$ denotes the minimum of $a$ and $b$.
\end{itemize}
\section{Kernel ridge regression in a multi-task setting \label{sec.model}}

  We consider here that each task is treated as a kernel ridge-regression problem and we will then extend the single-task ridge-regression estimator in a multi-task setting. 

\subsection{Model and estimator}
Let $\Omega$ be a set, $\mathcal{A}$ be a $\s$-algebra on $\Omega$ and $\PP$ be a probability measure on $\mathcal{A}$. 
We observe $\D_n = (X_i,Y_i^1,\ldots,Y_i^p)_{i=1}^n \in (\X \times  \R^p)^{n}$. 
For each task $j \in \{ 1,\ldots,p\}$, $\D_n^j = (X_i,y_i^j)_{i=1}^n$ is a sample with distribution $\P^j$, whose first marginal distribution is $\P$, for which a simple regression problem has to be solved. 

We assume that for every $j\in\setp,~F^j \in \LL$, $\S$ is a symmetric positive-definite matrix of size $p$ such that the vectors $(\e_i^j)_{j=1}^p$ are independent and identically distributed (i.i.d.) with normal distribution $\N(0,\S)$, with mean zero and covariance matrix~$\Sigma$, and
\begin{equation} \label{modele}
  \forall i \in \{ 1,\ldots,n\},\forall j \in \{ 1,\ldots,p\},  ~  y_i^j = F^j(X_i) + \e_i^j \pt
\end{equation} 
We suppose here, for simplicity, that $\S = \s^2 I_p$, with $\s^2 \in \R_+^{\star}$. 

\begin{rem}
This implies that the outputs of every task are independent, which slightly simplifies the setting but allow lighter calculations. 
It is to be noted, though, that the analysis carried afterwards can still take place without this assumption. 
This can be dealt by diagonalizing $\S$, majoring the quantities of interest using the largest eigenvalue of $\S$ and minoring those quantities by its smallest eigenvalue. 
The comparisons shown in Section~\ref{sec.MTSTcomp} are still valid, only being enlarged by the condition number of $\S$. A fully data-driven estimation of~$\S$ was proposed by  \citet{solnon:hal-00610534}.
\end{rem}

We consider here a fixed-design setting, that is, we consider the input points as fixed and want to predict the output of the functions $F^j$ on those input points only. 
The analysis could be transfered to the random-design setting by using tools developped by \citet{hsu2011analysis}.

For an estimator $(\Fh^1,\dots,\Fh^p)$, the natural quadratic risk to consider is 
\begin{equation*}
  \esp{\frac{1}{np}\sum_{j=1}^p\sum_{i=1}^n(\Fh^j(X_i)-F^j(X_i))^2 | (X_1,\dots,X_n)} \pt
\end{equation*}
For the sake of simplicity, all the expectations that follow will implicitly be written conditional on $(X_1,\dots,X_n)$. 
This corresponds to the fixed-design setting, which treats the input points as fixed.   
\begin{rem}
 We will use the following notations from now on :
\begin{equation*}
 f = \vect\paren{(f^j(X_i))_{i,j}},~ f^j = \vect\paren{(f^j(X_i))_{i=1}^n} ~\textrm{ and }~ y = \vect\paren{(Y_i^j)_{i,j}} \virg
\end{equation*}
so that, when using such vectorized notations, the elements are stacked task by task, the elements refering to the first task always being stored in the first entries of the vector, and so on.
\end{rem}

We want to estimate $f$ using elements of a particular function set. 
Let $\F\subset\LL$ be a reproducing kernel Hilbert space (RKHS) \citep{Aronszajn}, with kernel~$k$ and feature map $\Phi: \X \to \F$, which give us the positive semidefinite kernel matrix $K = (k(X_i,X_{\ell}))_{1 \leq i , \ell \leq n} \in \mathcal{S}_n^+(\R)$.

As done by \citet{solnon:hal-00610534} we extend the multi-task estimators generalizing the ridge-regression used in \citet{Evgeniou}. Given a positive-definite matrix $M \in \mathcal{S}_p^{++}(\R)$, we consider the estimator 

\begin{equation}\label{min.multi.M}
  \Fh_M \in \argmin_{g \in \F^p} \Bigg\{ \underbrace{\frac{1}{np} \sum_{i=1}^n \sum_{j=1}^p (y_i^j - g^j(X_i))^2}_{\textrm{Empirical risk}} + \underbrace{\sum_{j=1}^p \sum_{\ell=1}^p M_{j,l}\scal{g^j}{g^{\ell}}_{\F}}_{\textrm{Regularization term}} \Bigg\} \pt
\end{equation}
This leads to the fixed-design estimator
\begin{equation*}
  \fh_M = A_{M}y \in \R^{np}\virg
\end{equation*}
with
\begin{equation*}
A_{M}  = A_{M,K} 
\egaldef \tilde{K}_{M}(\tilde{K}_{M}+np I_{np})^{-1} = (M^{-1} \otimes K)\left((M^{-1} \otimes K)+np I_{np}\right)^{-1} \virg
\end{equation*}
where $\otimes$ denotes the Kronecker product (see the textbook of \citet{Horn_Johnson_Matrix_analysis} for simple properties of the Kronecker product).
\begin{rem}
 This setting also captures the single-task setting. 
Taking $j\in\setp$, $f^j = (f^j(X_1),\dots,f^j(X_n))\trsp$ being the target-signal for the $j$th task and $y^j = (y_1^j,\dots,y_n^j)\trsp$ being the observed output of the $j$th task, the single-task estimator for the $j$th task becomes (for $\l\in\R_+$)
 \begin{equation*}
  \fh^j_{\l} = A_{\l}y^j = K(K+n\l I_n)^{-1} y^j\pt
 \end{equation*}

\end{rem}

\subsection{Two regularization terms for one problem}
A common hypothesis that motivates the use of multi-task estimators is that all the target functions of the different tasks lie in a single cluster (that is, the $p$ functions that are estimated are all close with respect to the norm defined on~$\F$). 
Two different regularization terms are usually considered in this setting:

\begin{itemize}
 \item one that penalizes the norms of the $p$ function and their differences, introduced by \citet{Evgeniou}, leading to the criterion (with $(g^j)_{j=1}^p \in \F^p,~ (\a,\b) \in (\R_+)^2$)
  \begin{equation}\label{eq.MSD}
   \frac{1}{np} \sum_{i=1}^n \sum_{j=1}^p (y_i^j - g^j(X_i))^2 + \frac{\a}{p} \sum_{j=1}^p \norm{g^j}_{\F}^2 + \frac{\b}{2p} \sum_{j=1}^p \sum_{k=1}^p \norm{g^j-g^k}_{\F}^2 \ptvirg
  \end{equation}
 \item one that penalizes the norms of the average of the $p$ functions and the resulting variance, leading to the criterion (with $(g^j)_{j=1}^p \in \F^p,~ (\l,\mu) \in (\R_+)^2$)
  \begin{equation} \label{eq.MAV}
   \frac{1}{np} \sum_{i=1}^n \sum_{j=1}^p (y_i^j - g^j(X_i))^2 + \l \norm{\frac{\sum_{j=1}^p g^j}{p}}_{\F}^2 + \mu\croch{\frac{\sum_{j=1}^p \norm{g^j}_{\F}^2}{p}-\norm{\frac{\sum_{j=1}^p g^j}{p}}_{\F}^2} \pt
  \end{equation}
\end{itemize}
As we will see, those two penalties are closely related. Lemma~\ref{lemma.M} indeed shows that the two former penalties can be obtained as a special case of Equation~\eqref{min.multi.M}, the matrix $M$ being respectively 
\begin{equation*} 
\MSD(\a,\b) \egaldef  \frac{\a}{p} \frac{\boldsymbol{1}\boldsymbol{1}\trsp}{p} + \frac{\a+p\b}{p}\paren{I_p-\frac{\boldsymbol{1}\boldsymbol{1}\trsp}{p}}
\end{equation*}
and
\begin{equation*}
 \MAV(\l,\mu) \egaldef \frac{\l}{p}\frac{\boldsymbol{1}\boldsymbol{1}\trsp}{p} + \frac{\mu}{p}\paren{I_p-\frac{\boldsymbol{1}\boldsymbol{1}\trsp}{p}}\pt
\end{equation*}
Thus, we see that those two criteria are related, since $\MSD(\a,\b) = \MAV(\a,\a+p\b)$ for every $(\a,\b)$. Minimizing Equations~\eqref{eq.MSD} and \eqref{eq.MAV} over $\F^p$ respectly give the ridge estimators $\fh_{SD}(\a,\b) = A_{\MSD(\a,\b)}Y$ and $\fh_{\AV}(\l,\mu) = A_{\MAV(\l,\mu)} Y$. 
\begin{rem}
 We can now see that the regularization terms used in Equations~\eqref{eq.MSD} and \eqref{eq.MAV} are equivalent when the parameters are not constrained to be positive. However, if one desires to use the regularization \eqref{eq.MSD} (that is, with $\l = \a$ and $\mu = \a+p\b$) and seeks to calibrate those parameters  by taking them to be nonnegative (which is to be expected if they are seen as regularization parameters), the following problems could occur:
 \begin{itemize}
  \item if the optimization is carried over $(\l,\mu)$, then the selected parameter $\b = \frac{\mu-\l}{p}$ may be negative;
  \item conversely, if the risk of the estimator defined by Equation~\eqref{eq.MSD} is optimized over the parameters $(\a,\a+p\b)$ with the constraints $\a \geq 0$ and $\b \geq 0$, then the infimum over $\R_+^2$ could never be approached.
 \end{itemize}
\end{rem}
 We will also show in the next section that the risk of $ \fh_{\AV}(\l,\mu)$ nicely decomposes in two parts, the first part depending only on $\l$ and the second only on $\mu$, which is not the case for $\fh_{SD}(\a,\b)$ because of the aforementioned phenomenon. 
This makes us prefer the second formulation and use the matrices $\MAV$ instead of the matrices $\MSD$.

\section{Decomposition of the risk \label{sec.risk}}

A fully data-driven selection of the hyper-parameters was proposed by \citet{Arl_Bac:2009:minikernel_long}, for the single-task ridge estimator, and by \citet{solnon:hal-00610534} for the multi-task estimator. 
The single-task estimator is shown to have a risk which is close to the single-task oracle-risk (with a fixed-design)
\begin{equation*}
 \mathfrak{R}^{\star}_{\ST} = \inf_{(\l^1,\dots,\l^p)\in\R_+^p}\set{\frac{1}{np}\esp{\sum_{j=1}^p \norr{\fh^j_{\l^j}-f^j}^2}} \virg	
\end{equation*}
while the multi-task estimator is shown to have a risk which is close to the multi-task oracle risk 
\begin{equation*}
 \mathfrak{R}^{\star}_{\MT} =  \inf_{(\l,\mu)\in\R_+^2}\set{\frac{1}{np}\esp{\norr{\fh_{\MAV(\l,\mu)} -f}^2}} \pt
\end{equation*}
The purpose of this paper is to closely study both oracle risks and, ultimately, to compare them.  
We show in this section how to decompose the risk of an estimator obtained by minimizing Equation~\eqref{eq.MAV} over $(g^j)_{j=1}^p \in \F^p$. 
A key point of this analysis is that the matrix  $\MAV(\l,\mu)$ naturally decomposes over two orthogonal vector-subspaces of $\R^p$. 
By exploiting this decomposition we can simply use the classical bias-variance decomposition to analyse the Euclidean risk of those linear estimators. 

\subsection{Eigendecomposition of the matrix $\MAV(\l,\mu)$}

In this section we show that all the matrices $\MAV(\l,\mu)$ have the same eigenvectors, which gives us a simple decomposition of the matrices $A_{\MAV(\l,\mu)}$. 
Let us denote by $(e_1,\dots,e_p)$ the canonical basis of $\R^p$. The eigenspaces of $p^{-1}\boldsymbol{1}\boldsymbol{1}\trsp$ are orthogonal and correspond to:
\begin{itemize}
  \item $\linspan{e_1+\dots+e_p}$ associated to eigenvalue $1$,
  \item $\linspan{e_2-e_1,\dots,e_p-e_1}$  associated to eigenvalue $0$.
\end{itemize}
Thus, with $(\l,\mu)\in(R^+)^2$, we can diagonalize in an orthonormal basis any matrix $\MAV(\l,\mu)$ as $M = \MAV(\l,\mu)=P\trsp D_{\frac{\l}{p},\frac{\mu}{p}}P$, with $D  = \diag\{\frac{\l}{p},\frac{\mu}{p},\dots,\frac{\mu}{p}\} = D_{\frac{\l}{p},\frac{\mu}{p}}$. 
Let us also diagonalize $K$ in an orthonormal basis : $K = Q\trsp \Delta Q$, $\Delta = \diag\{\gamma_1,\dots,\gamma_n\}$. Then
\begin{equation*}
A_M = A_{\MAV(\l,\mu)} = (P\trsp \otimes Q\trsp) \left[(D^{-1}\otimes \Delta)\left((D^{-1}\otimes \Delta)+npI_{np} \right)^{-1} \right](P \otimes Q) \pt
\end{equation*}
We can then note that $(D^{-1}\otimes \Delta)\left((D^{-1}\otimes \Delta)+npI_{np} \right)^{-1}$ is a diagonal matrix, whose  diagonal entry of index $(j-1)n+i$ ($i\in\{1,\dots,n\}$, $j\in\{1,\dots,p\}$) is 
\begin{equation*}
\left\{ \begin{array}{lr} &\frac{\g_i}{\g_i+n\l} \textrm{ if } j=1\virg\\
                          &\frac{\g_i}{\g_i+n\mu} \textrm{ if } j>1\pt \end{array}\right.
\end{equation*}
In the following section we will use the following notations : 
\begin{itemize}
 \item for every $j\in\setp$, $(h^j_i)_{i=1}^n$ denotes the coordinates of $(f^j(X_i))_{i=1}^n$ in the basis that diagonalizes $K$,
 \item for every $i\in\setn$, $(\nu^j_i)_{j=1}^p$ denotes the coordinates of $(h^j_i)_{j=1}^p$ in the basis that diagonalizes $M$.
\end{itemize}
Or, to sum up, we have : 
\begin{equation*}
\forall j\in\{1,\dots,p\},~\begin{pmatrix} h^j_1\\\vdots\\h^j_n \end{pmatrix} = Q \begin{pmatrix} f^j(X_1)\\\vdots\\f^j(X_n) \end{pmatrix}
\end{equation*}
and 
\begin{equation*}
\forall i\in\setn,~\begin{pmatrix} \nu^1_i\\\vdots\\\nu^p_i \end{pmatrix} = P ~\begin{pmatrix} h^1_i\\\vdots\\h^p_i \end{pmatrix} \pt
\end{equation*}
With the usual notation $\nu^j = (\nu^j_1,\dots,\nu^j_n)\trsp$ and $f$ , we get, by using elementary properties of the Kronecker product,
\begin{equation*}
 \nu = \begin{pmatrix} \nu^1\\\vdots\\\nu^p \end{pmatrix} = (P\otimes Q) f \pt
\end{equation*}

\subsection{Bias-variance decomposition}
We now use a classical bias-variance decomposition of the risk of $\fh_{\AV}(\l,\mu)$ and show that the quantities introduced above allow a simple expression of this risk. 
For any matrix $M \in \mathcal{S}_p^{++}(\R)$, the classical bias-variance decomposition for the linear estimator $\fh_M = A_My$ is 
\begin{align*}
  \frac{1}{np}\esp{\norr{\fh_{M} -f}^2} &= \frac{1}{np}\|(A_{M}-I_{np})f\|_2^2 + \frac{1}{np}\tr(A_{M}\trsp A_{M}\cdot(\S \otimes I_n))  \\
					&= \underbrace{\frac{1}{np}\|(A_{M}-I_{np})f\|_2^2}_{\textrm{Bias}} +\underbrace{\frac{\s^2}{np} \tr(A_{M}\trsp A_{M})}_{\textrm{Variance}} \pt  
\end{align*}
We can now compute both bias and variance of the estimator $\fh_{\AV}(\l,\mu)$ by decomposing $A_{\MAV(\l,\mu)}$ on the eigenbasis introduced in the previous section. 
\begin{description}
\item[$np\times$Variance :]

\begin{align*}
&\s^2\tr(A_{M}\trsp A_{M})  \\
=~~&\s^2 \tr\bigg( (P \otimes Q)\trsp\left[(D^{-1}\otimes \Delta)\left((D^{-1}\otimes \Delta)+npI_{np} \right)^{-1}\right]^2  (P \otimes Q)  \bigg) \\
                                          =~~&\s^2 \tr\bigg(\left[(D^{-1}\otimes \Delta)\left((D^{-1}\otimes \Delta)+npI_{np} \right)^{-1}\right]^2\bigg)\\
                                          =~~&\s^2 \sum_{i=1}^n \left[\left(\frac{\g_i}{\g_i+n\l}\right)^2+(p-1)\left(\frac{\g_i}{\g_i+n\mu}\right)^2\right] \pt
\end{align*}
\item[$np\times$Bias :]
\begin{align*}
  &\|(A_{M}-I_{np})f\|_2^2 \\
 =~~&  \|(P \otimes Q)\trsp \left[(D^{-1}\otimes K)\left((D^{-1}\otimes K)+npI_{np} \right)^{-1} -I_{np}\right](P \otimes Q)f\|_2^2 \\
                         =~~& \|\left[(D^{-1}\otimes \Delta)\left((D^{-1}\otimes \Delta)+npI_{np} \right)^{-1} -I_{np}\right] \nu\|_2^2 \\
                         =~~& (n\l)^2\sum_{i=1}^n\frac{(\nu_i^1)^2}{(\g_i+n\l)^2} +(n\mu)^2 \sum_{i=1}^n\sum_{j=2}^p \frac{(\nu^j_i)^2}{(\g_i+n\mu)^2} \\
                         =~~& (n\l)^2\sum_{i=1}^n\frac{(\nu_i^1)^2}{(\g_i+n\l)^2} +(n\mu)^2 \sum_{i=1}^n \frac{\sum_{j=2}^p(\nu^j_i)^2}{(\g_i+n\mu)^2} \pt
\end{align*}
\end{description}

\noindent Thus, the risk of  $\fh_{\AV}(\l,\mu)$ becomes
\begin{equation}\label{optim.ind}
  \begin{split}
    n\l^2\sum_{i=1}^n\frac{\frac{(\nu_i^1)^2}{p}}{(\g_i+n\l)^2} + \frac{\s^2}{np} \sum_{i=1}^n \left(\frac{\g_i}{\g_i+n\l}\right)^2    ~~~~~~~~~~~~~~~~~~~~~~~~~~~~~~~~~~~~~~~~~~\\+  n\mu^2 \sum_{i=1}^n \frac{\frac{\sum_{j=2}^p(\nu^j_i)^2}{p}}{(\g_i+n\mu)^2} + \frac{\s^2(p-1)}{np} \sum_{i=1}^n\left(\frac{\g_i}{\g_i+n\mu}\right)^2  \pt
  \end{split}
 \end{equation}
This decomposition has two direct consequences: 
\begin{itemize}
 \item the oracle risk of the multi-task procedure can be obtained by optimizing Equation~\eqref{optim.ind} independently over $\l$ and $\mu$;
 \item the estimator $\fh_{\AV}$ can be calibrated by independently calibrating two parameters.
\end{itemize}
It is now easy to optimize over the quantities in Equation~\eqref{optim.ind}. An interesting fact is that both sides have a natural and interesting interpretation, which we give now.

\subsection{Remark}

To avoid further ambiguities and to simplify the formulas we introduce the following notations for every $i\in\setn$:
\begin{equation*}
 \mu_i = \nu_i^1 = \frac{h_i^1+\dots + h_i^p}{\sqrt{p}}
\end{equation*}
and 
\begin{equation*}
 \varsigma_i^2 = \frac{\sum_{j=1}^p(h_i^j)^2}{p} -\left( \frac{\sum_{j=1}^p h_i^j}{p}\right)^2 = \frac{1}{p} \sum_{j=1}^p \paren{h_i^j-\frac{\sum_{j=1}^p h_i^j}{p}}^2 \virg
\end{equation*}
so that 
\begin{equation*}
 p\varsigma_i^2 = \sum_{j=2}^p(\nu_i^j)^2\pt
\end{equation*} 
\begin{rem}
 We can see that for every $i\in\setn$, $\mu_i/\sqrt{p}$ is the average of the $p$ target functions $f^j$, expressed on the basis diagonalizing $K$. Likewise, $\varsigma_i^2$ can be seen as the variance between the $p$ target functions $f^j$ (which does not come from the noise).
\end{rem}
\noindent Henceforth, the risk of $\fh_{\AV}(\l,\mu)$ over $(\l,\mu)$ is decoupled into two parts.  
\begin{itemize}
  \item With the parameter $\l$, a part which corresponds to the risk of a single-task ridge estimator, which regularizes the mean of the tasks functions, with a noise variance $\s^2/p$: 
\begin{equation} \label{risque.moy.lambda}
n\l^2\sum_{i=1}^n\frac{\frac{\mu_i^2}{p}}{(\g_i+n\l)^2} + \frac{\s^2}{np} \sum_{i=1}^n \left(\frac{\g_i}{\g_i+n\l}\right)^2  \pt
\end{equation}
  \item With the parameter $\mu$, a part which corresponds to the risk of a single-task ridge estimator, which regularizes the variance of the tasks functions, with a noise variance $(p-1)\s^2/p$: 
\begin{equation}\label{risque.var.lambda}
n\mu^2 \sum_{i=1}^n \frac{\varsigma_i^2}{(\g_i+n\mu)^2} + \frac{(p-1)\s^2}{np} \sum_{i=1}^n\left(\frac{\g_i}{\g_i+n\mu}\right)^2   \pt
\end{equation}
\end{itemize}

\begin{rem}
  Our analysis can also be used on any set of positive semi-definite matrices $\M$ that are jointly diagonalizable on an orthonormal basis, as was $\set{\MAV(\l,\mu),(\l,\mu)\in\R^2_+}$. 
  The element of interest then becomes the norms of the projections of the input tasks on the different eigenspaces (here, the mean and the resulting variance of the $p$ tasks). 
  An example of such a set is when the tasks are known to be split into several clusters, the assignement of each task to its cluster being known to the statistician. 
  The matrices that can be used then regularize the mean of the tasks and, for each cluster, the variance of the tasks belonging to this cluster.  
\end{rem}

\section{Precise analysis of the multi-task oracle risk \label{sec.MTrisk}}

In the latter section we showed that, in order to obtain the multi-task risk, we just had to optimize several functions, which have the form of the risk of a kernel ridge estimator. 
The risk of those estimators has already been widely studied. 
\citet{Johnstone1994} (see also the article of \citet{CapDeVito} for random design) showed that, for a single-task ridge estimator, if the coefficients of the decomposition of the input function on the eigenbasis of the kernel decrease as $i^{-2\d}$, with $2\d >1$, then the minimax rates for the estimation of this imput function is of order $n^{1/2\d-1}$. 
The kernel ridge estimator is then known to be minimax optimal, under certain regularity assumptions (see the work of \citet{bach2012_column_sampling} for more details). If the eigenvalues of the kernel are known to decrease as $i^{-2\b}$, then a single-task ridge estimator is minimax optimal under the following assumption:
\begin{equation} \tag{\bf\HypM}\label{hyp.minimax}
 1 < 2\d < 4\b+1 \pt
\end{equation}

The analysis carried in the former section shows that the key elements to express this risk are the components of the average of the signals ($\mu_i$) and the components of the variance of the signals ($\varsigma_i$) on the basis that diagonalises the kernel matrix $K$, together with the eigenvalues of this matrix ($\gamma_i$). 
It is then natural to impose the same natural assumptions that make the single-task ridge estimator optimal on those elements.
We first suppose that the eigenvalues of the kernel matrix have a polynomial decrease rate: 
\begin{equation} \tag{\bf\HK}\label{hyp.K}
 \forall i \in \setn,~\gamma_i = ni^{-2\b} \pt
\end{equation}
Then, we assume that the the components of the average of the signals and the variance of the signals also have a polynomial decrease rate:
\begin{equation} \tag{\bf\HAV}\label{hyp.AV}
 \forall i \in \setn,~\left\{ \begin{array}{rcl} \frac{\mu_i^2}{p} &=& C_1 n i^{-2\d}  \\ \varsigma_i^2 &=& C_2 n i^{-2\d} \end{array} \right. \pt
\end{equation}

\begin{rem}
  We assume for simplicity that both Assumptions~\eqref{hyp.K} and \eqref{hyp.AV} hold in equality, although the equivalence $\asymp$ is only needed. 
\end{rem}

\begin{ex}
  This example, related to Assumptions \eqref{hyp.AV}  and

  \noindent \eqref{hyp.K} by taking $\b=m$ and $2\d = k+2$, is detailed by \citet{Wah:1990} and by \citet{gu2002smoothing}. 
  Let $\mathcal{P}\paren{2\pi}$ the set of all square-integrable $2\pi$-periodic functions on $\R$, $m\in\NN^{\star}$ and define $\mathcal{H}=\set{f\in\mathcal{P}\paren{2\pi},~f_{|\croch{0,2\pi}}^{(m)}\in L^2\croch{0,2\pi}} $. 
  This set $\mathcal{H}$ has a RKHS structure, with a reproducing kernel having the Fourier base functions as eigenvectors. 
  The $i$-th eigenvalue of this kernel is $i^{-2m}$. 
  For any function $f\in\mathcal{P}\croch{0,2\pi}\cap\C^k\croch{0,2\pi}$, then its Fourier coefficient are $O(i^{-k})$. 
  For instance, if $f\in\mathcal{P}\croch{0,2\pi}$ such that $\forall x \in \croch{-\pi,\pi},~ f^{(k)}(x) = \absj{x}$, then its Fourier coefficients are $\asymp i^{-(k+2)}$.  
\end{ex}

Under Assumptions~\eqref{hyp.K} and \eqref{hyp.AV}, we can now more precisely express the risk of a multi-task estimator. Equation~\eqref{risque.moy.lambda} thus becomes 
\begin{align*} 
   & n\l^2\sum_{i=1}^n\frac{\frac{\mu_i^2}{p}}{(\g_i+n\l)^2} + \frac{\s^2}{np} \sum_{i=1}^n \left(\frac{\g_i}{\g_i+n\l}\right)^2 \\
  =~~ & n\l^2\sum_{i=1}^n\frac{C_1ni^{-2\d}}{(ni^{-2\b}+n\l)^2} + \frac{\s^2}{np} \sum_{i=1}^n \left(\frac{ni^{-2\b}}{ni^{-2\b}+n\l}\right)^2 \\
  =~~ & C_1\l^2 \sum_{i=1}^n\frac{i^{4\b-2\d}}{(1+\l i^{2\b})^2}+ \frac{\s^2}{np} \sum_{i=1}^n \frac{1}{\left(1+\l i^{2\b}\right)^2} \\
  =~~ &  R(n,p,\s^2,\l,\b,\d,C_1) \virg
\end{align*}
while Equation~\eqref{risque.var.lambda} becomes 
\begin{align*} 
  & n\mu^2 \sum_{i=1}^n \frac{\varsigma_i^2}{(\g_i+n\mu)^2} + \frac{(p-1)\s^2}{np} \sum_{i=1}^n\left(\frac{\g_i}{\g_i+n\mu}\right)^2 \\
=~~ &n\mu^2\sum_{i=1}^n\frac{C_2ni^{-2\d}}{(ni^{-2\b}+n\mu)^2} + \frac{(p-1)\s^2}{np} \sum_{i=1}^n \left(\frac{ni^{-2\b}}{ni^{-2\b}+n\mu}\right)^2 \\
=~~ & C_2\mu^2 \sum_{i=1}^n\frac{i^{4\b-2\d}}{(1+\mu i^{2\b})^2}+ \frac{(p-1)\s^2}{np} \sum_{i=1}^n \frac{1}{\left(1+\mu i^{2\b}\right)^2} \\
=~~ & R(n,p,(p-1)\s^2,\mu,\b,\d,C_2) \virg
\end{align*}
with 
\begin{equation}\label{eq.def.R}
 R(n,p,\s^2,x,\b,\d,C) = Cx^2 \sum_{i=1}^n\frac{i^{4\b-2\d}}{(1+x i^{2\b})^2}+ \frac{\s^2}{np} \sum_{i=1}^n \frac{1}{\left(1+x i^{2\b}\right)^2} \pt
\end{equation}

\begin{rem}
  It is to be noted that the function $R$ corresponds to the risk of a single-task ridge estimator when the decomposition of the input function on the eigenbasis of $K$ has $i^{-2\d}$ for coefficients and when $p=1$. 
  Thus, studying $R$ will allow us to derive both single-task and multi-task oracle rates. 
\end{rem}

\subsection{Study of the optimum of $R(n,p,\s^2,\cdot,\b,\d,C)$}

We just showed that the function $R(n,p,\s^2,\cdot,\b,\d,C)$ was suited to derive both single-task and multi-task oracle risk. 
\citet{bach2012_column_sampling} showed how to obtain a majoration on the function $R(n,p,\s^2,\cdot,\b,\d,C)$, so that its infimum was showed to match the minimax rates under Assumption~\eqref{hyp.minimax}. 

In this section, we first propose a slightly more precise upper bound of this risk function. 
We then show how to obtain a lower bound on this infimum that matches the aforementioned upper bound.
This will be done by precisely localizing the parameter minimizing $R(n,p,\s^2,\cdot,\b,\d,C)$. 

Let us first introduce the following notation:

\begin{definition}
  \begin{equation*}
    R^{\star}(n,p,\s^2,\b,\d,C) = \inf_{\l\in\R_+} \set{R(n,p,\s^2,\l,\b,\d,C)} \pt
  \end{equation*}
\end{definition}

We now give the upper bound on $R^{\star}(n,p,\s^2,\b,\d,C)$. For simplicity, we will denote by $\kbd$ a constant, defined in Equation~\eqref{def.kbd}, which only depends on $\b$ and $\d$. 

\begin{property}\label{prop.maj.risk}
Let $n$ and $p$ be positive integers, $\s$, $\b$ and $\d$ positive  real numbers such that \eqref{hyp.minimax}, \eqref{hyp.K} and \eqref{hyp.AV} hold. Then,
\begin{equation}\label{eq.maj.R.opt}
 R^{\star}(n,p,\s^2,\b,\d,C) \leq  \paren{2^{1/2\d}\paren{\frac{np}{\s^2}}^{1/2\d-1}C^{1/2\d}\kbd}\wedge \frac{\s^2}{p} \pt
\end{equation}
\end{property}
\begin{proof}
  Property~\ref{prop.maj.risk} is proved in Section~\ref{app.proof.prop.maj.risk} of the appendix.
\end{proof}

In the course of showing  Property~\ref{prop.maj.risk}, we obtained an upper bound on the risk function $R$ that holds uniformly on $\R_+$. 
Obtaining a similar (up to multiplicative constants) lower bound that also holds uniformly on $\R_+$ is unrealistic. 
However, we will be able to lower bound $R^{\star}$ by showing that $R$ is minimized by an optimal parameter $\l^{\star}$ that goes to 0 as $n$ goes to $+\infty$.

\begin{property}\label{prop.param.reg.maj}
 If Assumption~\eqref{hyp.minimax} holds,  the risk $R(n,p,\s^2,\cdot,\b,\d,C)$ attains its global minimum over $\R_+$ on $[0,\e\paren{\frac{np}{\s^2}}]$, with
 \begin{equation*}
  \e\paren{\frac{np}{\s^2}} = \sqrt{  C^{(1/2\d)-1}2^{1/2\d}\kbd} \times\frac{1}{\paren{\frac{np}{\s^2}}^{1/2-(1/4\d)}}\paren{1+\eta\paren{\frac{np}{\s^2}}} \virg
 \end{equation*}
 where $\eta(x)$ goes to $0$ as $x$ goes to $+\infty$.
\end{property}
\begin{proof}
  Property~\ref{prop.param.reg.maj} is shown in Section~\ref{proof.prop.param.reg.maj} of the appendix.
\end{proof}

\begin{rem}
 Thanks to the assumption made on $\d$, $\frac{1}{2\d}-1<0$ so that $\paren{\frac{np}{\s^2}}^{\frac{1}{2\d}-1}$ goes to 0 as $\frac{np}{\s^2}$ goes to $+\infty$. 
 This allows us to state that, if the other parameters are constant, $\l^{\star}$ goes to 0 as the quantity $\frac{np}{\s^2}$ goes to $+\infty$.
\end{rem}

We can now give a lower bound on $R^{\star}(n,p,\s^2,\b,\d,C)$. 
We will give two versions of this lower bound. First, we state a general result. 

\begin{property}\label{prop.min.risk}
 For every $(C,\b,\d)$ such that $1<2\d<4\b$ holds, there exits an integer $N$ and a constant $\a\in(0,1)$ such that, for every for every $(n,p,\s^2)$ verifying $\frac{np}{\s^2} \geq N$, we have 
 \begin{equation}\label{eq.min.R.opt}
  R^{\star}(n,p,\s^2,\b,\d,C) \geq  \paren{\a\paren{\frac{np}{\s^2}}^{1/2\d-1}C^{1/2\d}\kbd}\wedge\frac{\s^2}{4p} \pt
 \end{equation}
\end{property}

\begin{proof}
  Property~\ref{prop.min.risk} is proved in Section~\ref{sec.proof.prop.min.risk} of the appendix.
\end{proof}

\begin{rem}
 It is to be noted that $N$ and $\a$ only depend on $\b$ and $\d$. We can also remark that $\a$ can be taken arbitrarily close to 
 \begin{equation*}
  \frac{\int_{0}^{1} \frac{u^{\frac{1}{2\b}-1}}{(1+u)^2}du}{\int_{0}^{+\infty} \frac{u^{\frac{1}{2\b}-1}}{(1+u)^2}du} \wedge \frac{\int_{0}^{1} \frac{u^{\frac{1-2\d}{2\b}+1}}{(1+u)^2}du}{\int_{0}^{+\infty} \frac{u^{\frac{1-2\d}{2\b}+1}}{(1+u)^2}du} \pt
 \end{equation*}
 Numerical computations show that, by taking $\b=\d=2$, this constant is larger than $0.33$.
\end{rem}

\begin{rem}
  The assumption made on $\b$ and $\d$ is slighlty more restrictive than \eqref{hyp.minimax}, under which the upper bound is shown to hold and under which the single-task estimator is shown to be minimax optimal. 
\end{rem}

 We are now ensured that $R$ attains its global minimum on $\R_+$, thus we can give the following definition.

\begin{definition}
 For every $n$, $p$, $\s^2$, $\d$, $\b$ and $C$, under the assumption of Property~\ref{prop.param.reg.maj}, we introduce 
 \begin{equation*}
  \l^{\star}_R \in \argmin_{\l\in\R_+}\set{R(n,p,\s^2,\l,\b,\d,C)} \pt
 \end{equation*}
\end{definition}

 We now give a slightly refined version of Property~\ref{prop.min.risk}, by discussing whether this optimal parameter $\l^{\star}_R $ is larger or lower than the threshold $n^{-2\b}$. 
 This allows us to better understand the effect of regularizarion on the oracle risk $R^{\star}$. 

\begin{property}\label{prop.param.reg.min}
 For every $(\b,\d)$ such that $4\b>2\d>1$, integers $N_1$ and $N_2$ exist such that 
 \begin{enumerate}
  \item for every $(n,p,\s^2)$ verifying  $\frac{np}{\s^2} \geq N_1$ and $n^{1-2\d} \times \frac{p}{\s^2} \leq \frac{1}{N_2}$, then
    \begin{equation*}
      \l^{\star}_R \geq \frac{1}{n^{2\b}}
    \end{equation*}
    and
  \begin{equation*}
     R^{\star}(n,p,\s^2,\b,\d,C) \asymp \paren{\frac{\s^2}{np}}^{1-1/2\d} \pt
   \end{equation*}
 \item for every $(n,p,\s^2)$ verifying  $\frac{np}{\s^2} \geq N_1$ and $n^{1-2\d}  \times \frac{p}{\s^2} \geq N_2$, then
    \begin{equation*}
      \l^{\star}_R \leq \frac{1}{n^{2\b}} 
    \end{equation*}
    and 
   \begin{equation*}
     R^{\star}(n,p,\s^2,\b,\d,C) \asymp R(n,p,\s^2,0,\b,\d,C) \asymp \frac{\s^2}{p} \ptvirg
   \end{equation*}
 \end{enumerate}

\end{property}
 
\begin{proof}
  Property~\ref{prop.param.reg.min} is proved in Section~\ref{sec.proof.prop.param.reg.min} of the appendix.
\end{proof}

\begin{rem}
 If $p \leq n\s^2$ and $\d > 1$ then we are in the first case, for a large enough $n$. 
This is a case where regularization has to be employed in order to obtain optimal convergence rates. 
\end{rem}
\begin{rem}
 If $\s^2$ and $n$ are fixed and $p$ goes to $+\infty$ then we are in the second case. 
 It is then useless to regularize the risk, since the risk can only be lowered by a factor $4$. 
 This also corresponds to a single-task setting where the noise variance $\s^2$ is very small and where the estimation problem becomes trivial. 
\end{rem}

\subsection{Multi-task oracle risk}
We can now use the upper and lower bounds on $R^{\star}$ to control the oracle risk of the multi-task estimator. 
We define 
\begin{equation*}
 \l^{\star} \in \argmin_{\l\in\R_+}\set{ R(n,p,\s^2,\l,\b,\d,C_1)}
\end{equation*}
and
\begin{equation*}
 \mu^{\star} \in \argmin_{\mu\in\R_+}\set{R(n,p,(p-1)\s^2,\mu,\b,\d,C_2)} \pt
\end{equation*}
Property~\ref{prop.param.reg.maj} ensures that $\l^{\star}$ and $\mu^{\star}$ exist, even though they are not necessarily unique. The oracle risk then is 
\begin{equation*}
 \mathfrak{R}^{\star}_{\MT} =  \inf_{(\l,\mu)\in\R_+^2}\set{\frac{1}{np}\esp{\norr{\fh_{\MAV(\l,\mu)} -f}^2}} = \frac{1}{np}\esp{\norr{\fh_{\MAV(\l^{\star},\mu^{\star})} -f}^2} \pt
\end{equation*}
We now state the main result of this paper, which simply comes from the analysis of $R^{\star}$ performed above. 
\begin{theorem}\label{thm.MT.oracle}
 For every $n$, $p$, $C_1$, $C_2$, $\s^2$, $\b$ and $\d$ such that Assumption~\eqref{hyp.minimax} holds, we have 
\begin{equation}\label{eq.maj.oracle.MT}
  \mathfrak{R}^{\star}_{\MT} \leq2^{1/2\d} \paren{\frac{np}{\s^2}}^{1/2\d-1}\kbd \croch{C_1^{1/2\d} + (p-1)^{1-(1/2\d)}C_2^{1/2\d}} \pt
\end{equation}
Furthermore, constants $N$ and $\a \in (0,1)$ exist such that, if $n \geq N$, $p/\s^2 \leq n$ and $2<2\d<4\b$, we have 
\begin{equation}\label{eq.min.oracle.MT}
  \mathfrak{R}^{\star}_{\MT} \geq \a\paren{\frac{np}{\s^2}}^{1/2\d-1}\kbd \croch{C_1^{1/2\d} + (p-1)^{1-(1/2\d)}C_2^{1/2\d}} \pt
\end{equation}
\end{theorem}

\begin{proof}
 The risk of the multi-task estimator $\fh_{\MAV(\l,\mu)}$ can be written as 
 \begin{equation*}
   R(n,p,\s^2,\l,\b,\d,C_1) + R(n,p,(p-1)\s^2,\mu,\b,\d,C_2) \pt
 \end{equation*}
 We then apply Properties~\ref{prop.maj.risk} and \ref{prop.min.risk}, since $p/\s^2 \leq n$ implies that $p/(p-1)\s^2 \leq n$.
 The assumption $\d >1$ ensures that the first setting of Property~\ref{prop.param.reg.min} holds. 
\end{proof}

\begin{rem}
  An interesting fact is that the oracle multi-task risk is of the order $(np/\s^2)^{1/2\d-1}$. 
  This corresponds to the risk of a single-task ridge estimator with sample size $np$. 
\end{rem}

\begin{rem}
  As noted before, the assumption under which the lower bound holds is slightly stronger than Assumption~\eqref{hyp.minimax}.
\end{rem}

\section{Single-task oracle risk \label{sec.STrisk}}

In the former section we obtained a precise approximation of the multi-task oracle risk $\mathfrak{R}^{\star}_{\MT}$. 
We would now like to obtain a similar approximation for the single-task oracle risk $\mathfrak{R}^{\star}_{\ST}$. 
In the light of Section~\ref{sec.risk}, the only element we need to obtain the oracle risk of task $j\in\setp$ is the expression of $(h_i^j)_{i=1}^n$, that is, the coordinates of $(f^j(X_i))_{i=1}^n$ on the eigenbasis of $K$.
Unfortunately, Assumption~\eqref{hyp.AV} does not correspond to one set of task functions $(f^1,\dots,f^p)$. 
Thus, since several single-task settings can lead to the same multi-task oracle risk, we now explicitly define two repartitions of the task functions $(f^1,\dots,f^p)$, for which the single-task oracle risk will be computed. 

\begin{itemize}
\item ``2 points'': suppose, for simplicity, that $p$ is even and that  
\begin{equation} \label{2Points} \tag{2Points}
 f^1  = \dots = f^{p/2} \et f^{p/2+1} = \dots = f^p\pt 
\end{equation}
\item ``1 outlier'': 
\begin{equation} \label{1Out} \tag{1Out}
 f^1 = \dots = f^{p-1} \pt
\end{equation}
\end{itemize}

Both assumptions correspond to settings in which the multi-task procedure would legitimately be used. 
Assumption~\eqref{2Points} models the fact that all the functions lie in a cluster of small radius. 
It supposes that the functions are split into two groups of equal size, in order to be able to explicitly derive the single-task oracle risk.
Assumption~\eqref{1Out} supposes that all the functions are grouped in one cluster, with one outlier.
In order to make the calculations possible, all the functions in one group are assumed to be equal. 
Since this is not a fully convincing situation to study the behaviour of the multi-task oracle, simulation experiments were also run on less restrictive settings.
The results of those experiments are shown in Section~\ref{sec.simus}. 

\begin{rem}
 The hypotheses \eqref{2Points} and \eqref{1Out} made on the functions $f^j$ can be expressed on $(h_i^j)$. Assumption~\eqref{2Points} becomes 
\begin{equation*} 
\forall i \in \setn,~ h^1_i = \dots = h^{p/2}_i \et h^{p/2+1}_i = \dots = h^p_i \virg
\end{equation*}
 while Assumption ~\eqref{1Out} becomes
\begin{equation*} 
\forall i \in \setn,~h^1_i = \dots = h^{p-1}_i \pt
\end{equation*}
\end{rem}
Under those hypotheses we now want to derive an expression of $(h_i^1,\dots,h_i^p)$ given $(\mu_i,\varsigma_i)$ so that we can exactly compute the single-task oracle risk. 
Remember we defined for every $i \in \{1,\dots,n\}$,
\begin{equation*}
 \mu_i = \frac{1}{\sqrt{p}} \sum_{j=1}^p h_i^j 
\end{equation*}
and
\begin{equation*}
 \varsigma_i^2 = \frac{1}{p} \sum_{j=1}^p \paren{h_i^j}^2 - \frac{\mu_i^2}{p} = \frac{1}{p} \sum_{j=1}^p\paren{h_i^j - \frac{\mu_i}{\sqrt{p}}}^2 \pt
\end{equation*}
We also re-introduce the single-task oracle risk:
\begin{equation*}
 \mathfrak{R}^{\star}_{\ST} = \inf_{(\l^1,\dots,\l^p)\in\R_+^p}\set{\frac{1}{np}\sum_{j=1}^p \norr{\fh^j_{\l^j}-f^j}^2} \pt	
\end{equation*}
We now want to closely study this single-task oracle risk, in both settings.

\subsection{Analysis of the oracle single-task risk for the ``2 points'' case \eqref{2Points}}
In this section we write the single-task oracle risk when Assumption~\eqref{2Points} holds. 
As shown in Lemma~\ref{lemma.risk.2Points}, the risk of the estimator $\fh^j_{\l}=A_{\l}y^j$ for the $j$th task, which we denote by $R^j(\l)$, verifies 
 \begin{equation*}
  R(n,1,\s^2,\l,\b,\d,\paren{\sqrt{C_1}-\sqrt{C_2}}^2) \leq R^j(\l) \leq  R(n,1,\s^2,\l,\b,\d,\paren{\sqrt{C_1}+\sqrt{C_2}}^2) \pt
 \end{equation*}
Both upper and lower parts eventually behave similarly. 
In order to simplify notations and to avoid having to constantly write two risks, we will assume that half of the tasks have a risk equal to the right-hand side of the later inequality and the other half a risk equal to the left-hand side of this inequality.
This leads to the following assumption:
\begin{equation} \tag{\bf H$_{\textrm{2Points}}$}\label{hyp.2Points}
 \forall i \in \setn,~ \left\{ \begin{array}{rcl} h_i^1 &=& \sqrt{n}i^{-\d}(\sqrt{C_1}+\sqrt{C_2}) \\ h_i^p &=& \sqrt{n}i^{-\d}(\sqrt{C_1}-\sqrt{C_2}) \end{array} \right. \pt
\end{equation}
This minor change does not affect the convergence rates of the estimator. 
Consequently, if $1 \leq j \leq p/2$ the risk for task $j$ is $R(n,1,\s^2,\l,\b,\d,\paren{\sqrt{C_1}+\sqrt{C_2}}^2)$ so that the oracle risk for task $j$ is, given that $n\s^2 \geq 1$, 
\begin{equation*}
\asymp \paren{\frac{n}{\s^2}}^{1/2\d-1}\kbd\times \paren{\sqrt{C_1}+\sqrt{C_2}}^{1/\d}\virg
\end{equation*}
and if $p/2+1\leq j \leq p$ the risk for task $j$ is $R(n,1,\s^2,\l,\b,\d,\paren{\sqrt{C_1}-\sqrt{C_2}}^2)$ so that the oracle risk for task $j$ is, given that $n\s^2 \geq 1$,
\begin{equation*} 
\asymp \paren{\frac{n}{\s^2}}^{1/2\d-1}\kbd\times \absj{\sqrt{C_1}-\sqrt{C_2}}^{1/\d}\virg
\end{equation*}

\begin{rem}
 We can remark that \eqref{hyp.2Points} implies \eqref{2Points} and that \eqref{hyp.2Points} implies \eqref{hyp.AV}, as shown in Lemma \ref{lemma.hyp.2Points}. Consequently, if \eqref{hyp.2Points} holds, we have, for every $i\in\setn$, $h_i^1 = \frac{\mu_i}{\sqrt{p}} + \varsigma_i$ and $h_i^p = \frac{\mu_i}{\sqrt{p}} - \varsigma_i$.
\end{rem}

\begin{cor}
  For every $n$, $p$, $C_1$, $C_2$, $\s^2$, $\b$ and $\d$ such that $2<2\d<4\b$ and $n\s^2>1$ and that Assumptions~\eqref{hyp.2Points} and \eqref{hyp.K} hold, then 
  \begin{equation} \label{ST_risk_2Points}
 \mathfrak{R}^{\star}_{\ST}  \asymp \paren{\frac{np}{\s^2}}^{1/2\d-1}\frac{\kbd}{2} \times p^{1-1/2\d}   \croch{\paren{\sqrt{C_1}+\sqrt{C_2}}^{1/\d}+\absj{\sqrt{C_1}-\sqrt{C_2}}^{1/\d}}\pt
\end{equation}
\end{cor}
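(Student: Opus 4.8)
The plan is to exploit the fact that the single-task oracle risk decouples completely across tasks: the regularization parameter $\l^j$ enters only the risk of the $j$th task, so the infimum over $(\l^1,\dots,\l^p)\in\R_+^p$ can be taken coordinatewise. This gives
\begin{equation*}
 \mathfrak{R}^{\star}_{\ST} = \frac{1}{p}\sum_{j=1}^p \inf_{\l^j\in\R_+}\set{\frac{1}{n}\esp{\norr{\fh^j_{\l^j}-f^j}^2}} \virg
\end{equation*}
so it suffices to evaluate the oracle risk of each single task and then average.

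First I would identify each per-task risk with the function $R$. Writing the bias--variance decomposition of $\fh^j_{\l}=A_{\l}y^j$ in the eigenbasis of $K$ and inserting Assumptions~\eqref{hyp.K} and \eqref{hyp.2Points}, the bracketing of Lemma~\ref{lemma.risk.2Points} collapses to an equality: the risk of task $j$ equals $R(n,1,\s^2,\l,\b,\d,C)$ with $C=\paren{\sqrt{C_1}+\sqrt{C_2}}^2$ when $1\leq j\leq p/2$ and $C=\paren{\sqrt{C_1}-\sqrt{C_2}}^2$ when $p/2+1\leq j\leq p$. Since \eqref{hyp.2Points} implies \eqref{2Points} (Lemma~\ref{lemma.hyp.2Points}), exactly half of the $p$ tasks fall in each group, and the oracle risk of each is $R^{\star}(n,1,\s^2,\b,\d,C)$ for the corresponding value of $C$.

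Next I would feed these single-task optima into the bounds already established for $R^{\star}$. Combining Property~\ref{prop.maj.risk} (upper bound) and Property~\ref{prop.min.risk} (lower bound), both with $p=1$, and invoking Property~\ref{prop.param.reg.min} to ensure we sit in the regularized regime (so the minimum in each bound is the polynomial term and not $\s^2$), one obtains
\begin{equation*}
 R^{\star}(n,1,\s^2,\b,\d,C) \asymp \paren{\frac{n}{\s^2}}^{1/2\d-1}\kbd\, C^{1/2\d} \virg
\end{equation*}
with $\asymp$-constants depending only on $\b$ and $\d$. Taking $C^{1/2\d}=\paren{\sqrt{C_1}+\sqrt{C_2}}^{1/\d}$ and $C^{1/2\d}=\absj{\sqrt{C_1}-\sqrt{C_2}}^{1/\d}$ for the two groups and averaging over the $p$ tasks (half of each type) yields
\begin{equation*}
 \mathfrak{R}^{\star}_{\ST}\asymp \frac{1}{2}\paren{\frac{n}{\s^2}}^{1/2\d-1}\kbd\croch{\paren{\sqrt{C_1}+\sqrt{C_2}}^{1/\d}+\absj{\sqrt{C_1}-\sqrt{C_2}}^{1/\d}} \pt
\end{equation*}
The announced form then follows from the elementary identity $\paren{n/\s^2}^{1/2\d-1}=\paren{np/\s^2}^{1/2\d-1}p^{1-1/2\d}$.

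The main obstacle is not the algebra but checking that, under the hypotheses $n\s^2>1$ and $2<2\d<4\b$, each single-task problem genuinely lies in the first case of Property~\ref{prop.param.reg.min}, so that both the upper and lower bounds are governed by the polynomial term rather than by $\s^2$. The decisive condition is $n^{1-2\d}/\s^2\leq 1/N_2$ for large $n$, which holds because $n\s^2>1$ gives $n^{1-2\d}/\s^2\leq n^{2-2\d}$, and $2-2\d<0$ since $\d>1$. The remaining point to watch is that the $\asymp$-constants stay uniform in $p$, $C_1$, $C_2$ and $\s^2$, which they do because they originate only from $\a$ and $2^{1/2\d}$, both functions of $\b$ and $\d$ alone.
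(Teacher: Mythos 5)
Your proposal is correct and follows essentially the same route as the paper, which states this corollary without a separate proof because it follows directly from the preceding per-task identifications $R^j(\l)=R(n,1,\s^2,\l,\b,\d,(\sqrt{C_1}\pm\sqrt{C_2})^2)$ under \eqref{hyp.2Points}, the bounds of Properties~\ref{prop.maj.risk} and \ref{prop.min.risk} with $p=1$, and the identity $\paren{n/\s^2}^{1/2\d-1}=\paren{np/\s^2}^{1/2\d-1}p^{1-1/2\d}$. Your explicit verification that $n\s^2>1$ and $\d>1$ place each task in the first (regularized) regime of Property~\ref{prop.param.reg.min}, via $n^{1-2\d}/\s^2\leq n^{2-2\d}\to 0$, is exactly the point the paper compresses into the phrase ``given that $n\s^2\geq 1$'', so no gap remains.
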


\subsection{Analysis of the oracle single-task risk for the ``1 outlier'' case \eqref{1Out}}
In this section we suppose that Assumption~\eqref{1Out} holds.
As shown in Lemma~\ref{lemma.risk.1Out}, we can lower and upper bound the risks of the single-tasks estimators by functions of the shape $R(n,p,\s^2,\l,\b,\d,C)$. 
As in the latter section, to avoid the burden of writing two long risk terms at every step, and since all those risks have the same convergence rates, we suppose from now on the new assumption: 
\begin{equation}\label{hyp.1Out}\tag{\bf H$_{\textrm{1Out}}$}
 \forall i \in \setn \left \{ \begin{array}{rcl} h_i^1 &=& \sqrt{n}i^{-\d}\paren{\sqrt{C_1} + \frac{1}{\sqrt{p-1}}\sqrt{C_2}} \\
 h_i^p &=&  \sqrt{n}i^{-\d}\paren{\sqrt{C_1} -\sqrt{p-1}\sqrt{C_2}} \end{array} \right.\pt
\end{equation}
This minor change does not affect the convergence rates of the estimator. 
Consequently, if $1 \leq j \leq p-1$ the risk for task $j$ is $R(n,1,\s^2,\l,\b,\d,\paren{\sqrt{C_1} + \sqrt{\frac{C_2}{p-1}}}^2)$ so that the oracle risk for task $j$ is, given that $n\s^2 \geq 1$, 
\begin{equation*} 
\asymp \paren{\frac{n}{\s^2}}^{1/2\d-1}\kbd \times \paren{\sqrt{C_1} + \sqrt{\frac{C_2}{p-1}}}^{1/\d}\virg
\end{equation*}
while the risk for task $p$ is $R(n,1,\s^2,\l,\b,\d,\paren{\sqrt{C_1} -\sqrt{(p-1)C_2}}^2)$ so that the oracle risk for task $p$ is, given that $n\s^2 \geq 1$, 
\begin{equation*} 
  \asymp \paren{\frac{n}{\s^2}}^{1/2\d-1}\kbd\times \absj{\sqrt{C_1} -\sqrt{(p-1)C_2}}^{1/\d}\pt
\end{equation*}

\begin{rem}
 We can also remark here that \eqref{hyp.1Out} implies \eqref{1Out} and that \eqref{hyp.1Out} implies \eqref{hyp.AV}, as shown in Lemma~\ref{lemma.risk.1Out}. Consequently, if \eqref{hyp.1Out} holds, we have, for every $i\in\setn$, $h_i^1 = \frac{\mu_i}{\sqrt{p}} + \frac{1}{\sqrt{p-1}}\varsigma_i$ and $h_i^p = \frac{\mu_i}{\sqrt{p}} - \sqrt{p-1}\varsigma_i$.
\end{rem}oracle

\begin{cor}
  For every $n$, $p$, $C_1$, $C_2$, $\s^2$, $\b$ and $\d$ such that $2<2\d<4\b$ and $n\s^2>1$ and that Assumptions~\eqref{hyp.1Out} and \eqref{hyp.K} hold, then 
  \begin{multline} \label{ST_risk_1Out}
    \mathfrak{R}^{\star}_{\ST}  \asymp \paren{\frac{np}{\s^2}}^{1/2\d-1}\kbd\\ \times p^{1-1/2\d}\croch{\frac{p-1}{p}\paren{\sqrt{C_1} + \sqrt{\frac{C_2}{p-1}}}^{1/\d} +\frac{1}{p}\absj{\sqrt{C_1} -\sqrt{(p-1)C_2}}^{1/\d}}\pt
  \end{multline}
\end{cor}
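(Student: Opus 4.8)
The plan is to exploit the complete decoupling of the single-task oracle risk across tasks, which reduces the statement to summing $p$ one-dimensional oracle rates already established for $R^{\star}(n,1,\s^2,\b,\d,\cdot)$, followed by an elementary change of base in the power of $np/\s^2$.

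First I would note that in the definition of $\mathfrak{R}^{\star}_{\ST}$ the parameter $\l^j$ of task $j$ appears only in the $j$-th summand, so the infimum over $(\l^1,\dots,\l^p)\in\R_+^p$ factorizes and, since for $p=1$ the quantity $R(n,1,\s^2,\l,\b,\d,C)$ equals $\frac1n\norr{\fh^j_\l-f^j}^2$,
\begin{equation*}
 \mathfrak{R}^{\star}_{\ST} = \frac{1}{p}\sum_{j=1}^p R^{\star}(n,1,\s^2,\b,\d,C_j) \virg
\end{equation*}
where, by the bias-variance computation of Section~\ref{sec.risk} recorded in Lemma~\ref{lemma.risk.1Out}, the risk of task $j$ is exactly $R(n,1,\s^2,\l,\b,\d,C_j)$ with the constant $C_j$ read off from \eqref{hyp.1Out}: namely $C_j=\paren{\sqrt{C_1}+\sqrt{\frac{C_2}{p-1}}}^2$ for $1\le j\le p-1$ and $C_p=\paren{\sqrt{C_1}-\sqrt{(p-1)C_2}}^2$.

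Second, I would invoke the per-task oracle rate. Under $2<2\d<4\b$ and $n\s^2>1$ (for $n$ large), each single task falls into the first regime of Property~\ref{prop.param.reg.min} applied with the $R$-argument $p$ set to $1$, so that $R^{\star}(n,1,\s^2,\b,\d,C_j) \asymp \paren{\frac{n}{\s^2}}^{1/2\d-1}\kbd\, C_j^{1/2\d}$, which is precisely the per-task asymptotics stated just before the corollary. The main obstacle here is \emph{uniformity}: the implicit constants in $\asymp$ must not depend on the tasks nor on $p$, since both the number of terms and the constants $C_j$ vary with $p$. This is guaranteed because, as noted after Property~\ref{prop.min.risk}, the thresholds $N_1,N_2$ and the constant $\a$ depend only on $\b$ and $\d$. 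One must additionally check that task $p$, whose constant $C_p$ may be small, still lies in this regularization regime rather than in the trivial one; this follows from the localization of the minimizer provided by Property~\ref{prop.param.reg.maj}.

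Finally, I would assemble the sum: there are $p-1$ tasks contributing $C_j^{1/2\d}=\paren{\sqrt{C_1}+\sqrt{\frac{C_2}{p-1}}}^{1/\d}$ and one task contributing $\absj{\sqrt{C_1}-\sqrt{(p-1)C_2}}^{1/\d}$, which gives
\begin{equation*}
 \mathfrak{R}^{\star}_{\ST}\asymp \frac{1}{p}\paren{\frac{n}{\s^2}}^{1/2\d-1}\kbd\croch{(p-1)\paren{\sqrt{C_1}+\sqrt{\frac{C_2}{p-1}}}^{1/\d}+\absj{\sqrt{C_1}-\sqrt{(p-1)C_2}}^{1/\d}} \pt
\end{equation*}
The concluding step is the change of base $\paren{\frac{n}{\s^2}}^{1/2\d-1}=p^{1-1/2\d}\paren{\frac{np}{\s^2}}^{1/2\d-1}$; factoring $p^{1-1/2\d}$ out and distributing the prefactor $\frac1p$ across the bracket then yields exactly \eqref{ST_risk_1Out}.
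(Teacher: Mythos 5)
Your proposal is correct and follows essentially the same route as the paper, whose (implicit) proof is exactly the discussion preceding the corollary: task-wise decoupling of the infimum, the per-task constants $C_j$ read off from Assumption~\eqref{hyp.1Out} via Lemma~\ref{lemma.risk.1Out}, the per-task oracle rate $R^{\star}(n,1,\s^2,\b,\d,C_j)\asymp\paren{\frac{n}{\s^2}}^{1/2\d-1}\kbd\, C_j^{1/2\d}$ from Properties~\ref{prop.maj.risk}, \ref{prop.min.risk} and \ref{prop.param.reg.min}, and the rewriting $\paren{\frac{n}{\s^2}}^{1/2\d-1}=p^{1-1/2\d}\paren{\frac{np}{\s^2}}^{1/2\d-1}$. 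Your explicit check that the $\asymp$ constants are uniform in $j$ and $p$ (via the remark that $N$ and $\a$ depend only on $\b$ and $\d$) is a point the paper leaves tacit, and is a welcome addition.
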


\section{Comparison of  multi-task  and single-task \label{sec.MTSTcomp}}
In the two latter section we obtained precise approximations of the multi-task oracle risk, $\mathfrak{R}^{\star}_{\MT}$, and of the single-task oracle risk, $\mathfrak{R}^{\star}_{\ST}$, under either Assumption~\eqref{hyp.2Points} or \eqref{hyp.1Out}. 
We can now compare both risks in either setting, by studying their ratio 
\begin{equation*}
  \rho =  \frac{\mathfrak{R}^{\star}_{\MT}}{\mathfrak{R}^{\star}_{\ST}} \pt
\end{equation*}
We will express the quantity $\rho$ as a factor of 
\begin{equation*}
  r = \frac{C_2}{C_1} \pt
\end{equation*}
The parameter $r$ controls the amount of the signal which is contained in the mean of the functions.
When $r$ is small, the mean of the tasks contains much more signal than the variance of the tasks, so that the tasks should be ``similar''. 
This is a case where the multi-task oracle is expected to perform better than the single-task oracle. 
On the contrary, when $r$ is large, the variance of the tasks is more important than the mean of the tasks. 
This is a case where the tasks would be described as ``non-similar''. 
It is then harder to conjecture whether the single-task oracle performs better than the multi-task oracle and, as we will see later, the answer to this greatly depends on the setting. 

\subsection{Analysis of the oracle multi-task improvement for the ``2 points'' case \eqref{2Points}}
We now express $\rho$ as a function of $r$ when the tasks are split in two groups. 

\begin{cor}\label{cor.rho.2Points}
  For every $n$, $p$, $C_1$, $C_2$, $\s^2$, $\b$ and $\d$ such that $2<2\d<4\b$ and $n\s^2>p$ and that Assumptions~\eqref{hyp.2Points} and \eqref{hyp.K} hold, then 
  \begin{equation} \label{rho_2Points}
    \rho \asymp \frac{p^{1/2\d-1}+(\frac{p-1}{p})^{1-(1/2\d)}r^{1/2\d}}{\paren{1+\sqrt{r}}^{1/\d}+\absj{1 -\sqrt{r}}^{1/\d}} \pt
  \end{equation}
\end{cor}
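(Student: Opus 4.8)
The plan is to form the ratio $\rho=\mathfrak{R}^{\star}_{\MT}/\mathfrak{R}^{\star}_{\ST}$ directly from the two estimates already established, so that no fresh analytic work is needed---only algebraic simplification followed by a change of variable to $r=C_2/C_1$. First I would check that the standing hypotheses $2<2\d<4\b$ and $n\s^2>p$ let me invoke both earlier results simultaneously. Indeed $n\s^2>p$ gives $p/\s^2<n$ (and a fortiori $n\s^2>1$), while $2<2\d<4\b$ implies \eqref{hyp.minimax}; hence the upper and lower bounds of Theorem~\ref{thm.MT.oracle} both apply and combine into
\begin{equation*}
 \mathfrak{R}^{\star}_{\MT} \asymp \paren{\frac{np}{\s^2}}^{1/2\d-1}\kbd\croch{C_1^{1/2\d} + (p-1)^{1-(1/2\d)}C_2^{1/2\d}}\virg
\end{equation*}
while the hypotheses of Equation~\eqref{ST_risk_2Points} hold verbatim, supplying the matching two-sided estimate for $\mathfrak{R}^{\star}_{\ST}$. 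Since both $\asymp$ relations have positive sides, their quotient is again an $\asymp$ relation, so I may pass to the ratio.

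Second, I would cancel the factors common to the two risks. The power $\paren{np/\s^2}^{1/2\d-1}$ and the constant $\kbd$ occur in both, so they disappear from $\rho$; the explicit $\tfrac12$ in \eqref{ST_risk_2Points} is harmless, being absorbed into the constants hidden in $\asymp$. What remains is to homogenize the two brackets in $C_1$. Factoring $C_1^{1/2\d}$ out of the multi-task bracket gives $C_1^{1/2\d}\croch{1+(p-1)^{1-(1/2\d)}r^{1/2\d}}$, and using $\sqrt{C_1}+\sqrt{C_2}=\sqrt{C_1}\paren{1+\sqrt r}$ together with $\absj{\sqrt{C_1}-\sqrt{C_2}}=\sqrt{C_1}\absj{1-\sqrt r}$ shows that the single-task bracket equals $C_1^{1/2\d}\croch{\paren{1+\sqrt r}^{1/\d}+\absj{1-\sqrt r}^{1/\d}}$. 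Thus the factor $C_1^{1/2\d}$ cancels top and bottom.

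Finally I would divide numerator and denominator by $p^{1-1/2\d}$: the numerator turns into $p^{1/2\d-1}+\paren{\frac{p-1}{p}}^{1-(1/2\d)}r^{1/2\d}$ and the denominator into a constant multiple of $\paren{1+\sqrt r}^{1/\d}+\absj{1-\sqrt r}^{1/\d}$, which is precisely \eqref{rho_2Points}. There is no real obstacle in this argument: it is a bookkeeping computation, and the only subtlety---worth stating explicitly---is the compatibility of the two hypothesis sets used to derive $\mathfrak{R}^{\star}_{\MT}$ and $\mathfrak{R}^{\star}_{\ST}$, which is guaranteed by $n\s^2>p$.
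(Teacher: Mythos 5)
Your proposal is correct and follows exactly the route the paper intends: the corollary is an immediate consequence of dividing the two-sided bound of Theorem~\ref{thm.MT.oracle} by the single-task estimate \eqref{ST_risk_2Points}, cancelling $\paren{np/\s^2}^{1/2\d-1}$, $\kbd$ and $C_1^{1/2\d}$, and normalizing by $p^{1-1/2\d}$. Your hypothesis check ($n\s^2>p$ giving both $p/\s^2\leq n$ and $n\s^2>1$, with $2<2\d<4\b$ covering \eqref{hyp.minimax}, and \eqref{hyp.2Points} implying \eqref{hyp.AV} via Lemma~\ref{lemma.hyp.2Points}) is precisely what makes the two estimates compatible.
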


\begin{rem}
  The right-hand side of Equation~\eqref{rho_2Points} is always smaller than $\frac{1}{2}$. 
  Thus, under the assumptions of Corollary~\ref{cor.rho.2Points}, the multi-task oracle risk can never be arbitrarily worse than the single-task oracle risk.
\end{rem}

We can first see that,  under the assumptions of Corollary~\ref{cor.rho.2Points}, $\rho = \Theta\paren{p^{1/2\d-1}}$ as $r$ goes to 0. 
This is the same improvement that we get we multiplying the sample-size by $p$. 
We also have $\rho = \Theta\paren{\paren{\frac{p-1}{p}}^{1-(1/2\d)}}$ as $r$ goes to $+\infty$, so that the multi-task oracle and the single-task oracle behave similarly. 
 Finally, $\rho =\Theta\paren{\frac{r^{1/2\d}}{\paren{1+\sqrt{r}}^{1/\d}+\absj{1 -\sqrt{r}}^{1/\d}}}$ as $p$ goes to $+\infty$, so that the behaviours we just discussed are still valid with a large number of tasks. 

\subsection{Analysis of the oracle multi-task improvement for the ``1 outlier'' case \eqref{1Out} \label{sub.sec.an1Out}}
We now express $\rho$ as a function of $r$ when the tasks are grouped in one group, with one outlier. 

\begin{cor}\label{cor.rho.1Out}
  For every $n$, $p$, $C_1$, $C_2$, $\s^2$, $\b$ and $\d$ such that $2<2\d<4\b$ and $n\s^2>p$ and that Assumptions~\eqref{hyp.1Out} and \eqref{hyp.K} hold, then 
\begin{equation}\label{rho_1Out}
 \rho \asymp\frac{p^{1/2\d-1}+\paren{\frac{p-1}{p}}^{1-(1/2\d)}r^{1/2\d}}{\frac{p-1}{p}\paren{1+\sqrt{\frac{r}{p-1}}}^{1/\d} +\frac{1}{p}\absj{1-\sqrt{r(p-1)}}^{1/\d}} \pt
\end{equation}
\end{cor}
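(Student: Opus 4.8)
The plan is to read off $\rho$ by dividing the two estimates that have already been established: the multi-task bound of Theorem~\ref{thm.MT.oracle} and the single-task estimate~\eqref{ST_risk_1Out}. First I would verify that the hypotheses $2<2\d<4\b$ and $n\s^2>p$ are exactly what is needed for both ingredients to hold as two-sided ($\asymp$) estimates. The condition $n\s^2>p$ gives $p/\s^2<n$, which places us in the first case of Property~\ref{prop.param.reg.min} and makes the upper bound~\eqref{eq.maj.oracle.MT} and the lower bound~\eqref{eq.min.oracle.MT} match, so that
$$\mathfrak{R}^{\star}_{\MT} \asymp \paren{\frac{np}{\s^2}}^{1/2\d-1}\kbd \croch{C_1^{1/2\d} + (p-1)^{1-(1/2\d)}C_2^{1/2\d}}\virg$$
while the same inequality together with $p\geq 2$ ensures $n\s^2>1$, so that~\eqref{ST_risk_1Out} applies.

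Next I would form the quotient $\rho=\mathfrak{R}^{\star}_{\MT}/\mathfrak{R}^{\star}_{\ST}$ and cancel the common prefactor $\paren{\frac{np}{\s^2}}^{1/2\d-1}\kbd$ shared by both estimates. This is legitimate because the hidden constants in the two $\asymp$ relations depend only on $\b$ and $\d$, not on $r$, so they merge into a single pair of constants for $\rho$. What remains to simplify is
$$\rho \asymp \frac{C_1^{1/2\d} + (p-1)^{1-(1/2\d)}C_2^{1/2\d}}{p^{1-1/2\d}\croch{\frac{p-1}{p}\paren{\sqrt{C_1} + \sqrt{\frac{C_2}{p-1}}}^{1/\d} +\frac{1}{p}\absj{\sqrt{C_1} -\sqrt{(p-1)C_2}}^{1/\d}}}\pt$$

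The core step is to factor out a common power of $C_1$ so that everything is expressed through $r=C_2/C_1$. In the numerator I would write $C_1^{1/2\d} + (p-1)^{1-(1/2\d)}C_2^{1/2\d} = C_1^{1/2\d}\croch{1 + (p-1)^{1-(1/2\d)}r^{1/2\d}}$, and in each denominator term I would pull $\sqrt{C_1}$ out of the base before raising to the power $1/\d$, using $\sqrt{C_1}+\sqrt{C_2/(p-1)} = \sqrt{C_1}\paren{1+\sqrt{r/(p-1)}}$ and $\sqrt{C_1}-\sqrt{(p-1)C_2} = \sqrt{C_1}\paren{1-\sqrt{(p-1)r}}$. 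Every surviving factor $C_1^{1/2\d}$ then cancels between numerator and denominator. Finally, distributing $p^{1-1/2\d}$ into the numerator turns $1$ into $p^{1/2\d-1}$ and $(p-1)^{1-(1/2\d)}$ into $\paren{\frac{p-1}{p}}^{1-(1/2\d)}$, which yields precisely~\eqref{rho_1Out}.

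Since the result is a direct algebraic consequence of two estimates already proved, no genuine analytic obstacle arises; the proof is pure bookkeeping. The only points demanding care are keeping the exponents $1/2\d$ and $1/\d$ distinct throughout the factoring, and leaving the term $\absj{1-\sqrt{(p-1)r}}^{1/\d}$ as is, since its base may vanish at $r=1/(p-1)$. At that value the estimate degenerates, but the stated equivalence is read as an identity of expressions in $r$, so this causes no difficulty in the formal simplification.
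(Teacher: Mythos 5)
Your proposal is correct and follows exactly the route the paper intends: Corollary~\ref{cor.rho.1Out} is stated without a separate proof precisely because it is the quotient of the two-sided bounds \eqref{eq.maj.oracle.MT}--\eqref{eq.min.oracle.MT} of Theorem~\ref{thm.MT.oracle} (whose hypotheses $p/\s^2\leq n$ and $2<2\d<4\b$ you correctly match to $n\s^2>p$) by the single-task estimate \eqref{ST_risk_1Out}, followed by factoring out $C_1^{1/2\d}$ to express everything in $r=C_2/C_1$. Your bookkeeping of the exponents $1/2\d$ versus $1/\d$, the cancellation of the prefactor $\paren{np/\s^2}^{1/2\d-1}\kbd$ with constants depending only on $\b$ and $\d$, and the remark that the identity $\absj{\sqrt{C_1}-\sqrt{(p-1)C_2}}^{1/\d}=C_1^{1/2\d}\absj{1-\sqrt{(p-1)r}}^{1/\d}$ holds even when the base vanishes are all accurate.
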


We can see that,  under the assumptions of Corollary~\ref{cor.rho.1Out}, $\rho = \Theta\paren{p^{1/2\d-1}}$ as $r$ goes to 0. 
As in the latter section, this is the same improvement that we get we multiplying the sample-size by $p$. 
However, $\rho = \Theta\paren{\paren{\frac{p-1}{p}}^{1-1/2\d} \times \frac{p(p-1)^{-1/2\d}}{1+(p-1)^{1-1/\d}}}$ as $r$ goes to $+\infty$. 
This quantity goes to $+\infty$ as $p\longrightarrow +\infty$, so that the multi-task oracle performs arbitrarily worse than the single-task one in this asymptotic setting. 
Finally, $\rho=\Theta\paren{r^{1/2\d}}$ as $p$ goes to $+\infty$. 
This quantity goes to $+\infty$ as $r$ goes to $+\infty$, so that the behaviours we just mentioned stay valid with a large number of tasks.

\subsection{Discussion \label{subsec.diff}}

  When $r$ is small, either under Assumption~\eqref{2Points} or \eqref{1Out}, the mean of the signal is much stronger than the variance. 
  Thus, the multi-task procedure performs better than the single-task one.
  \begin{ex}
    If $r=0$, then all the tasks are equal. The improvement of the multi-task procedure over the single-task one then is $p^{1/2\d-1}$. This was expected: it corresponds to the risk of a ridge regression with a $np$-sample.  
  \end{ex}
  As $r$ goes to $0$, the multi-task oracle outperforms its single-task counterpart by a factor $p^{1/2\d-1}$. When $p$ is large (but, remember, this only holds when $p/\s^2 \leq n$, so $n$ also has to be large), this leads to a substantial improvement. It is easily seen that, for any constant $C>1$, if $r \leq (C-1)^{2\d} (p-1)^{1-2\d}$, then the right-hand side of Equation~\eqref{rho_2Points} becomes smaller than $Cp^{1/2\d-1}$. Thus, if the tasks are similar enough, the multi-task oracle performs as well as the oracle for a $np$-sample, up to a constant.  

\medskip

  On the contrary, when $r$ is large, the variance carries most of the signal, so that the tasks differ one from another. As $r$ goes to $+\infty$, the two settings have different behaviours:
    \begin{itemize}
     \item under Assumption~\eqref{2Points} (that is, when we are faced to two equally-sized groups), the oracle risks of the multi-task and of the single-task estimators are of the same order: they can only differ by a multiplicative constant;
     \item under Assumption~\eqref{1Out} (that is, when we are faced to one cluster and one outlier), the single-task oracle outperforms the multi-task one, by a factor which is approximatly $p^{1/\d}$.  
    \end{itemize}
  
  Finally, Assumption~\eqref{2Points} presents no drawback for the multi-task oracle, since under those hypotheses its performance cannot be worse than the single-task oracle's one. 
  On the contrary, Assumption~\eqref{1Out} presents a case where the use of a multi-task technique greatly increases the oracle risk, when the variance between the tasks is important, while it gives an advantage to the multi-task oracle when this variance is small. 
  The location where the multi-task improvement stops  corresponds to the barrier $\rho = 1$. 
  Studying this object seems difficult, since we only know $\rho$ up to a multiplicative constant.  
  Also, finding the contour lines of the righ-hand side of Equation~\eqref{rho_1Out} does not seem to be an easy task. 
  In Section~\ref{sec.simus}, we will run simulations in situations where the oracle risk can no longer be explicitly derived. 
  We will show that the behaviours found in these two examples still appear in the simulated examples.

\section{Risk of a multi-task estimator \label{sec.riskest}}

 \citet{solnon:hal-00610534} introduced an entirely data-driven estimator to calibrate $\MAV(\l,\mu)$ over $\R_+^2$. 
One of their main results is an oracle inequality, that compares the risk of this estimator to the oracle risk. 
Thus, $\mathfrak{R}^{\star}_{\MT}$ is attainable by a fully data-driven estimator. 
We now show that our estimation of the multi-task oracle risk is precise enough so that we can use it in the mentionned oracle inequality and still have a lower risk than the single-task oracle one.

The following assumption will be used, with $\df(\l) = \tr(A_{\l})$ and $A_{\l} = K(K+n\l I_n)^{-1}$ : 
\begin{equation}\tag{Hdf}\label{Hdf}
  \left.
    \begin{aligned}
      & \forall j \in \set{1, \ldots, p} , \, \exists \l_{0,j} \in (0,+\infty) \, ,  \\
      & \qquad \df(\l_{0,j}) \leq \sqrt{n} \quad \mbox{and} \quad \frac{1}{n} \norm{ (A_{\l_{0,j}}-I_n) f^j }_2^2 \leq \s^2 \sqrt{\frac{\ln n}{n}}
    \end{aligned}
    \enspace \right\}
\end{equation}

We will also denote $\M = \set{\MAV(\l,\mu),~ (\l,\mu)\in\R_+^2}$ and $\Mh_{\textrm{HM}}$ the estimator introduced in \citet{solnon:hal-00610534}, which belongs to $\M$. Theorem 29 of \citet{solnon:hal-00610534} thus states:

 \begin{theorem} \label{thm.oracle.HM}
Let $\a = 2$, $\th \geq 2$, $p\in\NN^{\star}$ and assume~\eqref{Hdf} holds true. An absolute constant $L>0$ and a constant $n_1(\th)$ exist such that the following holds as soon as $n \geq n_1(\th)$.
\begin{equation} \label{resultatoracle.esp.HM}
  \begin{split}
  \esp{\frac{1}{np}\norr{\fh_{\Mh_{\textrm{HM}}}-f}^2} \leq \left( 1+\frac{1}{\ln(n)} \right)^2 \esp{\inf_{M \in \M}\left\{\frac{1}{np} \norr{ \fh_{M}-f}^2  \right\}} \\+L\s^2 (2 + \th)^2p\frac{\ln(n)^3}{n}
 + \frac{p}{n^{\th/2}} \frac{\norr{f}^2  }{n p}
   \pt
  \end{split}
\end{equation}
\end{theorem}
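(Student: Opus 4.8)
The plan is to follow the ``minimal penalty'' route for selecting among linear estimators, specialised to the two-parameter family $\M = \set{\MAV(\l,\mu),(\l,\mu)\in\R_+^2}$. Write $y = f + \e$ with $\e \sim \N(0,\s^2 I_{np})$ and $\fh_M = A_M y$, and recall that $\Mh_{\textrm{HM}}$ is the minimiser over $\M$ of a penalised empirical criterion $\crit(M) = \frac{1}{np}\norr{\fh_M - y}^2 + \pen(M)$, where $\pen$ is (twice, hence the role of $\a=2$) a data-driven surrogate for the expected ideal penalty $\frac{2\s^2}{np}\df(M)$. Introducing the ideal penalty $\penid(M) = \frac{2}{np}\scal{\fh_M - f}{\e} = \frac{2}{np}\scal{(A_M - I_{np})f}{\e} + \frac{2}{np}\scal{A_M\e}{\e}$, expanding the squared norms and using the minimality $\crit(\Mh_{\textrm{HM}}) \leq \crit(M)$ gives, for every competitor $M \in \M$,
\begin{equation*}
  \frac{1}{np}\norr{\fh_{\Mh_{\textrm{HM}}} - f}^2 \leq \frac{1}{np}\norr{\fh_M - f}^2 + \croch{\pen(M) - \penid(M)} + \croch{\penid(\Mh_{\textrm{HM}}) - \pen(\Mh_{\textrm{HM}})} \pt
\end{equation*}
It then suffices to show that $\pen$ is, with high probability and up to the factor $1+1/\ln n$, sandwiched between $\penid$ and its expectation $\frac{2\s^2}{np}\df(M)$, simultaneously at the oracle $M$ and at the random $\Mh_{\textrm{HM}}$.

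The second step is the stochastic control of $\penid(M) - \frac{2\s^2}{np}\df(M)$ uniformly over $\M$. Here I would exploit the joint diagonalisation of Section~\ref{sec.risk}: every $A_{\MAV(\l,\mu)}$ shares the eigenbasis $(P\otimes Q)$ and is determined by the two scalar shrinkage profiles $\g_i/(\g_i+n\l)$ and $\g_i/(\g_i+n\mu)$. Consequently the quadratic term $\scal{A_M\e}{\e}$ and the linear term $\scal{(A_M-I_{np})f}{\e}$ depend on $M$ only through the two effective dimensions attached to $\l$ and $\mu$. I would bound the fluctuation of $\scal{A_M\e}{\e}$ by Gaussian concentration for quadratic forms, controlling deviations through the operator and Frobenius norms of $A_M$ (the latter being at most $\df(M)$), and the linear term by the Gaussian tail. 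To pass from pointwise to uniform control over this continuous family, I would run a peeling argument over dyadic ranges of $\df(M)$, which keeps the entropy logarithmic and generates the $\ln n$ factors.

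The third step plugs in a good competitor. Assumption~\eqref{Hdf} supplies, for each task $j$, a parameter $\l_{0,j}$ with $\df(\l_{0,j}) \leq \sqrt{n}$ and bias at most $\s^2\sqrt{\ln n / n}$; summing over the $p$ tasks caps the degrees of freedom and the bias of an admissible multi-task competitor, which both bounds the remainder and ensures that $\esp{\inf_{M\in\M}\set{\frac{1}{np}\norr{\fh_M - f}^2}}$ is large enough relative to $\s^2(\ln n)^3/n$ for the leading constant to collapse to $(1+1/\ln n)^2$. Assembling the deviation bounds produces the additive remainder $L\s^2(2+\th)^2 p (\ln n)^3/n$, while the complementary event on which the concentration fails, of probability at most of order $n^{-\th/2}$, is handled by crudely bounding $\norr{\fh_{\Mh_{\textrm{HM}}} - f}^2$ by a multiple of $\norr{f}^2$, which yields the final term $\frac{p}{n^{\th/2}}\frac{\norr{f}^2}{np}$.

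The hard part will be the uniform concentration of the second step: because $\M$ is an infinite, continuously parameterised collection, a naive union bound is unavailable, and the sharp $(1+1/\ln n)^2$ leading factor forces the deviation thresholds to be tuned so that the multiplicative slack decays exactly like $1/\ln n$; balancing this tuning against the peeling entropy so as to keep the remainder at order $(\ln n)^3/n$ is the delicate part of the argument. I note that the statement is reproduced verbatim as Theorem~29 of \citet{solnon:hal-00610534}, so within the present paper it is invoked rather than reproved.
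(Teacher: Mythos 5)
Within this paper there is no proof to compare against: as you correctly observed, the statement is quoted verbatim from Theorem~29 of \citet{solnon:hal-00610534} and is invoked here, not reproved, so the only meaningful comparison is with the proof in that source. Measured against it, your sketch has the right architecture (penalised criterion, ideal-penalty comparison via $\crit(\Mh_{\textrm{HM}}) \leq \crit(M)$, Gaussian concentration of $\scal{A_M\e}{\e}$ and $\scal{(A_M-I_{np})f}{\e}$ made uniform over the continuously parametrised family with $\th\ln n$-level deviations producing the $\ln n$ factors, and a crude bound on the bad event of probability $O(n^{-\th/2})$), but you misattribute the role of Assumption~\eqref{Hdf}. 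In the source, the penalty is not directly a surrogate for $\frac{2\s^2}{np}\df(M)$ with known noise level: the estimator is two-stage, first producing a minimal-penalty estimate $\hat{\S}$ of the noise covariance (here $\hat{\s}^2$), then minimising a Mallows-type criterion with penalty $\frac{2}{np}\tr\paren{A_M(\hat{\S}\otimes I_n)}$. Assumption~\eqref{Hdf} is exactly what makes the first stage work: the existence, for each task, of $\l_{0,j}$ with $\df(\l_{0,j})\leq\sqrt{n}$ and bias at most $\s^2\sqrt{\ln n/n}$ localises the jump of the selected dimension as the penalty constant varies, so that $\hat{\s}^2$ concentrates around $\s^2$ at rate compatible with the $(1+1/\ln n)$ slack; it is not used to supply a good competitor or to lower-bound the oracle risk as you suggest (indeed no lower bound on the oracle risk is needed, since the remainder is additive, not multiplicative). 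Two smaller points: the constants $\a=2$ and $\th$ parametrise the noise-estimation stage and the deviation level respectively, rather than $\a$ marking the factor two in the ideal penalty; and on the complementary event one bounds $\norr{\fh_{\Mh_{\textrm{HM}}}-f}$ by $\norr{\e}+\norr{f}$ (the $A_M$ being contractions) and applies Cauchy--Schwarz, the noise contribution being absorbed into the $L\s^2(2+\th)^2 p \ln(n)^3/n$ term, which is why only $\norr{f}^2$ survives in the last term of \eqref{resultatoracle.esp.HM}.
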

We first remark that 
\begin{equation*}
 \esp{\inf_{M \in \M}\left\{\frac{1}{np} \norr{ \fh_{M}-f}^2  \right\}} \leq  \mathfrak{R}^{\star}_{\MT}\pt
\end{equation*}
We can now plug the oracle risk in the oracle inequality~\eqref{resultatoracle.esp.HM}. 
Then, if we suppose that, for $i\in\setn$ and $j\in\setp$, $(h_i^j)^2 = n C^j i^{-2\d}$, we have that 
\begin{equation*}
 \norr{f}^2 = \sum_{j=1}^p\sum_{i=1}^n (h_i^j)^2 = n\sum_{j=1}^p C^j \sum_{i=1}^n i^{-2\d} \leq n\zeta{(2\d)}\sum_{j=1}^p C^j \pt
\end{equation*}
  
\begin{rem}
 Assumption \eqref{2Points} means that for every $i\in\setn$, if $1\leq j\leq p/2$, 
\begin{equation*}
 C^j = \paren{\sqrt{C_1}+\sqrt{C_2}}^2
\end{equation*}
 and if $p/2+1\leq j\leq p$, 
\begin{equation*}
 C^j = \paren{\sqrt{C_1}-\sqrt{C_2}}^2 \pt
\end{equation*}
 Assumption $\eqref{1Out}$ means that for every $i\in\setn$, if $1\leq j \leq p-1$, 
\begin{equation*}
 C^j=\paren{\sqrt{C_1} + \sqrt{\frac{C_2}{p-1}}}^2
\end{equation*}
while 
\begin{equation*}
 C^p = \paren{\sqrt{C_	1} -\sqrt{(p-1)C_2}}^2 \pt
\end{equation*} 
\end{rem}

\begin{property}
 Under Assumptions \eqref{hyp.K} and \eqref{hyp.AV} with $2\d>2$, there exists a constant $N_1$ such that for every $n\geq N_1$, Assumption \eqref{Hdf} holds. 
\end{property}

\begin{proof}
 We can see that Assumption \eqref{Hdf} is made independently on every task. Thus we can suppose that $p=1$. Let us denote $b(\l) = n^{-1} \norr{(A_\l-I_n)f}^2$. We can see that if there exists constants $c>0$ and $d>1$ such that for every $\l\in\R_+$ $b(\l) \leq c\s^2\df(\l)^{-d}$, then Assumption \eqref{Hdf} holds for $n$ large enough. Indeed, let $\l \in \R_+$ such that $\df(\l) \leq \sqrt{n}$. Then, if $b(\l) \leq c\s^2\df(\l)^{-d}$, $b(\l) \leq \s^2 c (\sqrt{n})^{-d} \leq \s^2 c \frac{n^{(-d+1)/2}}{\sqrt{n}}$. It just suffices to see that, for $n$ large enough, $cn^{-d+1} \leq \ln(n)$.

Using Lemmas \ref{lemma.S1} and \ref{lemma.S2} we can see that, for every $\l\in\R_+$,
\begin{equation*}
 b(\l) \leq \frac{\l^{\frac{2\d-1}{2\b}}}{\b}I_1(\b,\d)
\end{equation*}
 and, for $n$ large enough, there exists a constant $\a$ such that, for every $\l\in\R_+$,
 \begin{equation*}
  \df(\l) = \tr{A_{\l}} \geq \a\frac{\l^{\frac{-1}{2\b}}}{2\b}I_2(\b)
 \end{equation*}
 Thus, for $n$ large enough, there exists a constant $c$ (depending on $\s^2$, $\b$ and $\d$) such that, for every $\l\in\R_+$,
 \begin{equation*}
  b(\l) \leq c\s^2 \tr(A_{\l})^{-(2\d-1)} \pt
 \end{equation*}
 Hence, if $2\d>2$, there exists a constant $N_1$ such that for every $n\geq N_1$, Assumption \eqref{Hdf} holds. 
\end{proof}

Thus, we can apply Theorem~\ref{thm.oracle.HM} to the estimator $\fh_{\Mh_{\textrm{HM}}}$ under either Assumption \eqref{2Points} or \eqref{1Out} (and we denote by $\rho$ either $\rho_{2Points}$ or $\rho_{1Out}$).

\begin{property}
 For every positive numbers $(\b,\d,\th,C_1,C_2)$ verifying $4\b>2\d>2$ and $\th>1$,  there exists positive constants $(N(\b,\d,\th),L)$ such that, for every $(n,p,\s^2)$ verifying $n\geq N$ and $\frac{p}{\s^2} \leq n$, if Assumption \eqref{hyp.K} and if either Assumption~\eqref{hyp.2Points} or Assumption~\eqref{hyp.1Out} hold, the ratio between the risk of the estimator  $\fh_{\Mh_{\textrm{HM}}}$ and the single-task oracle risk verifies 
 \begin{equation*}
  \begin{split}
    \frac{ \esp{\frac{1}{np}\norr{\fh_{\Mh_{\textrm{HM}}}-f}^2}}{\mathfrak{R}^{\star}_{\ST}} \leq \left( 1+\frac{1}{\ln(n)} \right)^2\rho ~~~~~~~~~~~~~~~~~~~~~~~~~~~~~~~~~~~~~~~~~~~~~~  \\ ~~~~~~~~~~~~~~~~~~~~~~~~~~~~~~~~+ Cst \times \frac{L\s^2(2 +\th)^2 p\frac{\ln(n)^3}{n} + \frac{p\zeta(2\d)}{n^{\th/2}}\frac{1}{p}\sum_{j=1}^pC^j}{\paren{\frac{n}{\s^2}}^{1/2\d-1}\kbd\times\frac{1}{p}\sum_{j=1}^p (C^j)^{1/2\d}} \pt
  \end{split}
  \end{equation*}
\end{property}
 
\begin{proof}
 This is a straightforward application of the preceding results. 
\end{proof}

We now show that the latter fully data-driven multi-task ridge estimator achieves a lower risk than the single-task ridge oracle, in both settings \eqref{2Points} and \eqref{1Out}. 
\begin{cor}\label{cor.est}
  For every positive numbers $(\b,\d,\th,\s^2,\e)$ verifying $4\b>2\d>2$ and $\th>2$, there exists positive constants $(N,r)$ such that, for every $(n,p,C_1,C_2)$ verifying $n\geq N$, $\frac{p}{\s^2}\leq n^{1/4\d}$ and $\frac{C_2}{C_1} \leq r$, if Assumptions \eqref{hyp.K} holds and if either Assumption~\eqref{hyp.2Points} or Assumption~\eqref{hyp.1Out} hold, the ratio between the risk of the estimator  $\fh_{\Mh_{\textrm{HM}}}$ and the single-task oracle risk verifies 
\begin{equation*}
  \frac{ \esp{\frac{1}{np}\norr{\fh_{\Mh_{\textrm{HM}}}-f}^2}}{\mathfrak{R}^{\star}_{\ST}} < \e \pt
 \end{equation*}
\end{cor}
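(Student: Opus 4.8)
The plan is to start from the bound established in the preceding Property, which has already combined the oracle inequality of Theorem~\ref{thm.oracle.HM} with the explicit single-task oracle rates of Corollaries~\ref{cor.rho.2Points} and \ref{cor.rho.1Out}. Consequently the quantity to control is already displayed as a sum of two pieces: a leading term $\paren{1+\frac{1}{\ln(n)}}^2\rho$ and an additive remainder coming from the non-oracle terms of \eqref{resultatoracle.esp.HM}. It therefore suffices to force each piece below $\e/2$, the first by choosing the similarity threshold $r$ and the second by choosing the sample-size threshold $N$.

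For the leading term I would invoke Corollary~\ref{cor.rho.2Points} under \eqref{hyp.2Points} (respectively Corollary~\ref{cor.rho.1Out} under \eqref{hyp.1Out}): in both cases, as $r=C_2/C_1\to 0$ the ratio $\rho$ is governed by the multi-task gain $p^{1/2\d-1}$, the $r^{1/2\d}$ contributions to the numerator vanishing. Since $\paren{1+\frac{1}{\ln(n)}}^2\to 1$, one fixes $r$ small enough, depending only on $\b$, $\d$ and $\e$, so that $\paren{1+\frac{1}{\ln(n)}}^2\rho<\e/2$ for all $n\geq 3$; this is precisely the ``similar tasks'' regime in which the data-driven estimator inherits the favourable oracle comparison.

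The crux is the remainder term, whose numerator carries the two additive errors $L\s^2(2+\th)^2 p\frac{\ln(n)^3}{n}$ and $\frac{p\zeta(2\d)}{n^{\th/2}}\frac1p\sum_{j=1}^p C^j$, while its denominator is of order $\paren{\frac{n}{\s^2}}^{1/2\d-1}\kbd\,\frac1p\sum_{j=1}^p(C^j)^{1/2\d}$, i.e.\ of the order of the single-task oracle risk itself. Dividing, the first contribution is of order $n^{-1/2\d}p\ln(n)^3$ and the second of order $n^{1-1/2\d-\th/2}p$, up to constants in $\s^2$, $\b$, $\d$ and the $C^j$. Here the hypothesis $\frac{p}{\s^2}\leq n^{1/4\d}$ is essential: substituting $p\leq \s^2 n^{1/4\d}$ turns the first contribution into $O\paren{n^{-1/4\d}\ln(n)^3}$ and, together with $\th>2$, the second into $O\paren{n^{1-1/4\d-\th/2}}$, both tending to $0$. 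This is exactly why the weaker condition $p/\s^2\leq n$ sufficient for the oracle comparison no longer suffices and must be strengthened to $p/\s^2\leq n^{1/4\d}$; checking that these exponents remain negative after dividing by the small single-task oracle risk is the main obstacle and the only genuinely non-routine bookkeeping. Choosing $N$ large then makes the remainder smaller than $\e/2$, and combining the two bounds yields the claim.
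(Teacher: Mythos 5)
Your proposal follows essentially the same route as the paper's own proof: you start from the ratio bound in the preceding Property, shrink the leading term $\paren{1+\frac{1}{\ln(n)}}^2\rho$ by taking $r=C_2/C_1$ small (exactly as the paper does via the limits of $\rho_{2Points}$ and $\rho_{1Out}$), and control the remainder by substituting $p\leq \s^2 n^{1/4\d}$ and using $\th>2$, arriving at the same orders $n^{-1/4\d}\ln(n)^3$ and $n^{1-1/4\d-\th/2}$ that appear in the paper's two displayed computations. Your treatment of the task-dependent constants (noting that $\frac{1}{p}\sum_{j=1}^p C^j$ and $\frac{1}{p}\sum_{j=1}^p (C^j)^{1/2\d}$ enter only as bounded factors) likewise mirrors the paper's opening remark, so the argument is correct to the same degree of rigor as the original.
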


\begin{proof}
 First, we can see that under either Assumption~\eqref{2Points} or Assumption~\eqref{1Out}, both $\frac{1}{p}\sum_{j=1}^pC^j$ and $\frac{1}{p}\sum_{j=1}^p (C^j)^{1/2\d}$ converge, as $p$ goes to $+\infty$, to quantities only depending on $C_1$, $C_2$ and $\d$ and are thus bounded with respect to $p$. 
Then, as it was shown in the previous section, both $\rho_{2Points}$ and $\rho_{1Out}$ go to $0$ as $\frac{C_1}{C_2}$ goes to $0$. Finally, we can see that $\frac{p}{\s^2}\leq n^{1/4\d}$ implies that $\frac{p}{\s^2}\leq n$ and that 
  \begin{equation*}
   \frac{\s^2 p\frac{\ln(n)^3}{n}}{\paren{\frac{n}{\s^2}}^{1/2\d-1}} = \paren{\s^2}^{1/2\d}\times p \times \frac{\ln(n)^3}{n^{1/2\d}} \leq \paren{\s^2}^{1+1/2\d} \times \frac{\ln(n)^3}{n^{1/4\d}} \converge_{n\to +\infty}0 
  \end{equation*}
 together with
 \begin{equation*}
  \frac{\frac{p}{n^{\th/2}}}{\paren{\frac{n}{\s^2}}^{1/2\d-1}} \leq  \paren{\s^2}^{1/2\d} \times n^{1-\th/2-1/4\d}\converge_{n\to +\infty}0 \pt
 \end{equation*}
\end{proof}

\begin{rem}
  The result shown in Corollary~\eqref{cor.est}  establishes that a fully data-driven multi-task estimator outperforms an oracle single-task estimator, which is minimax optimal . 
\end{rem}

\section{Numerical experiments \label{sec.simus}}

The hypotheses we used in the former sections, although sufficient to precisely derive the risk of the estimator, do not reflect realistic situations. 
In this section we study less restrictive settings. 
However, we are no longer able to obtain simple formulas for the oracle risk as we did before. 
Thus, we resort to numerical simulations to illustrate the behaviour of both single-task and multi-task oracles. 

\subsection{Setting A: relaxation of Assumptions~\eqref{hyp.AV} and \eqref{2Points} in order to get one general group of tasks}
In the latter sections we modeled the fact that the $p$ target functions are close. 
However, due to technical constraints we were only able to deal with cases where the functions are split into two groups and are then equal inside each group, thus introducing Assumptions~\eqref{2Points} and \eqref{1Out}. 
We propose here to extend this setting by simulating a more general group of tasks.
Those tasks should all be at a comparable distance from a centroid function. 

We suppose that  $(\e_i^j)_{i\in\setn,j\in\setp}$ is  a sequence of i.i.d. random variables, independent of $(X_i)_{i\in\setn}$, following a Rademacher distribution (that is, such that $\Proba(\e_i^j=1) = \Proba(\e_i^j=-1) = 1/2$). 
The target functions are then defined by 
\begin{equation}
 \forall i\in\setn,~\forall j \in \setp,~ h_i^j = \sqrt{n}i^{-\d}\paren{\sqrt{C_1}+\e_i^j\sqrt{C_2}} \pt 
\end{equation}
Thus, all the $p$ target functions are ``close'' to a centroid function, whose coordinates on the eigenvectors of the kernel matrix are $\sqrt{n}i^{-\d}\sqrt{C_1}$, with a ``dispersion factor''  $\sqrt{C_2}$. In this setting, we can easily express the key elements for the analysis of this risk :
\begin{equation*}
 \frac{\mu_i^2}{p} = ni^{-2\d}\paren{\sqrt{C_1}+\frac{\sum_{j=1}^{p}\e_i^j}{p}\sqrt{C_2}}^2
\end{equation*}
and
\begin{equation*}
 \varsigma_i^2 = ni^{-2\d} \paren{\frac{1}{p}\sum_{j=1}^p\paren{\sqrt{C_1}+\e_i^j\sqrt{C_2}}^2 -\paren{\sqrt{C_1}+\frac{\sum_{j=1}^{p}\e_i^j}{p}\sqrt{C_2}}^2} \pt
\end{equation*}
\begin{rem}
 The theoretical analysis developed previously cannot be applied here, due to the presence of random terms, which depend on $i$, in front of the decay term $ni^{-2\d}$.
\end{rem}

\subsection{Setting B: random drawing of the input points and functions}

Assumptions~\eqref{hyp.K} and \eqref{hyp.AV} model the behaviour of the spectral elements of $f$ and $K$ as if they exactly follow the spectral elements of the kernel operator and the input function. 
Although convenient for the analysis, this setting is unlikely to hold in practice and we propose here to draw the input points $(X_i)_{i=1}^n$ and compute the risk using the eigenvalues of the kernel matrix.

We suppose here that $(X_i)_{i=1}^n$ is a sequence of i.i.d. random variables uniformly drawn on $\croch{-\pi,\pi}$. 
As in the latter section, we also suppose that  we have an i.i.d. sequence of random variables $(\e_i^j)_{i\in\setn,j\in\setp}$, independent of $(X_i)_{i\in\setn}$, following a Rademacher distribution. The target functions are then defined by 
\begin{equation}
 \forall i\in\setn,~\forall j \in \setp,~ f^j(X_i) =\paren{\sqrt{C_1}+\e_i^j\sqrt{C_2}}\absj{X_i} \pt 
\end{equation}
As stated in \citet{Wah:1990} and in \citet{gu2002smoothing}, a natural kernel to use is, with $m \in \NN^{\star}$, 
\begin{equation*}
 R(x,y) = 2\sum_{i=1}^{+\infty}\frac{\cos\paren{i(x-y)}}{i^{2m}} \pt
\end{equation*}
In this setting, the coefficients of the decomposition of $f : x \mapsto \absj{x}$ on the Fourier basis are known to be asymptotically equivalent to $i^{-2}$. Thus, this setting is a natural extension of Assumptions~\eqref{hyp.K} and \eqref{hyp.AV}, with $\b = m$---since the eigenvalues of the kernel $R$ are $i^{-2m}$---and $\d = 2$.

\subsection{Setting C: further relaxation of Assumptions~\eqref{hyp.AV}  and \eqref{2Points} in one group of tasks}
We consider the same tasks than in Setting A, but also allow the regularity of the variance to vary. 
This gives the following model, supposing that  $(\e_i^j)_{i\in\setn,j\in\setp}$ is  a sequence of i.i.d. random variables, independent of $(X_i)_{i\in\setn}$, following a Rademacher distribution:
\begin{equation*}
 \forall i\in\setn,~\forall j \in \setp,~ h_i^j = \sqrt{n}\paren{\sqrt{C_1}i^{-\d_1}+\e_i^j\sqrt{C_2}i^{-\d_2}} \pt 
\end{equation*}
We allow the variance to have a varying regularity and intensity by changing $C_2$ and $\d_2$. This gives us the following quantities of interest: for every $i\in\setn$,
\begin{equation*}
 \frac{\mu_i^2}{p} = ni^{-2\d_1}\paren{\sqrt{C_1}+\frac{\sum_{j=1}^{p}\e_i^j}{p}\sqrt{C_2}i^{-(\d_2-\d_1)}}^2
\end{equation*}
and
\begin{equation*}
  \begin{split}
     \varsigma_i^2 = ni^{-2\d_1} \Bigg(\frac{1}{p}\sum_{j=1}^p\paren{\sqrt{C_1}+\e_i^j\sqrt{C_2}i^{-(\d_2-\d_1)}}^2 ~~~~~~~~~~~~~~~~~~~~~~~~~~~~~~~~~~~~~~~~\\ ~~~~~~~~~~~~~~~~~~~~~~~~~~~~~~~~~~~~~~~~-\paren{\sqrt{C_1}+\frac{\sum_{j=1}^{p}\e_i^j}{p}\sqrt{C_2}i^{-(\d_2-\d_1)}}^2 \Bigg) \pt
  \end{split}
\end{equation*}

\subsection{Setting D: relaxation of Assumptions~\eqref{1Out} and \eqref{hyp.AV}}

Assumption~\eqref{1Out} states that we have one  of $p-1$ identical tasks and one outlier. 
We now simulate a slightly more general setting by having one cluster of $p-1$ around 0 and an outlier. 
This gives the following model, supposing that  $(\e_i^j)_{i\in\setn,j\in\setp}$ is  a sequence of i.i.d. random variables, independent of $(X_i)_{i\in\setn}$, following a Rademacher distribution:
\begin{equation*}
  \forall i \in \set{1,\dots,n},~ \forall j \in \set{1,\dots,p-1},~ h^j_i = \sqrt{n}\e_i^j i^{-2} 
\end{equation*}
and 
\begin{equation*}
  \forall i \in \set{1,\dots,n},~ \ h^p_i = \sqrt{nC_2}\e_i^p i^{-\d_2}\pt 
\end{equation*}
We allow the outlier to have a varying regularity and intensity by changing $C_2$ and $\d_2$. This gives us the following quantities of interest: for every $i\in\setn$,
\begin{equation*}
 \frac{\mu_i^2}{p} = ni^{-\d_1}\paren{\frac{\sqrt{C_1}}{p}\sum_{j=1}^{p-1}\e_i^j+\frac{\e_i^p}{p}\sqrt{C_2}i^{-(\d_2-\d_1)}}^2
\end{equation*}
and
\begin{equation*}
 \varsigma_i^2 = ni^{-\d_1} \paren{\frac{p-1}{p}C_1+\frac{1}{p}C_2i^{-2(\d_2-\d_1)} -\paren{\frac{1}{p}\sum_{j=1}^{p-1}\e_i^j+\frac{\e_i^p}{p}\sqrt{C_2}i^{-(\d_2-\d_1)}}^2} \pt
\end{equation*}

\subsection{Methodology}

 In every setting, we computed the oracle risks of both the multi-task estimator and the single-task one. 
As shown before, for instance in Equation~\eqref{optim.ind}, both the multi-task risk (which has two hyper-parameters, $\l$ and $\mu$) and the single-task risk (which has $p$ hyper-parameters, $\l^1$ to $\l^p$) can be decomposed as a sum of several functions, each depending on a unique hyper-parameter. 
We used  Newton's method to optimize each of those $p+2$ functions over, respectively, $\l$, $\mu$, $\l^1,\dots,\l^p$. 
Our stopping criterion was that the derivative of the function being optimized was inferior to $10^{-5}$, in absolute value. 
We replicated each experiment $N=100$ times. 
This gives us $N$  independent realisations of  $(\mathfrak{R}^{\star}_{\MT},\mathfrak{R}^{\star}_{\ST})$, the randomness coming from the repartition of the tasks and, in Setting~B, from the drawing of the input points $(X_i)_{i=1}^n$. 

 In Settings A and B, we  first test the hypothesis $ \mathbb{H}_0 = \set{\Proba\paren{\mathfrak{R}^{\star}_{\MT} < \mathfrak{R}^{\star}_{\ST}} < 0.5 }$ against $ \mathbb{H}_1 = \set{\Proba\paren{\mathfrak{R}^{\star}_{\MT} < \mathfrak{R}^{\star}_{\ST}} \geq 0.5 }$. 
This amounts to testing whether the median of $\frac{\mathfrak{R}^{\star}_{\MT}}{\mathfrak{R}^{\star}_{\ST}}$ is larger than one. 
For every iteration $i\in\set{1,\dots,N}$, we observe $B_i = \mathbf{1}_{\mathfrak{R}^{\star}_{\MT} < \mathfrak{R}^{\star}_{\ST}}$. 
Since the random variables $(B_i)_{i\in\set{1,\dots,N}}$ follow a Bernouilli distribution of parameter $\Proba\paren{\mathfrak{R}^{\star}_{\MT} < \mathfrak{R}^{\star}_{\ST}}$, we can apply Hoeffding's inequality \citep{Massart} and see that, for every $\e >0$, $[\bar{B}_N-\e,1]$ is a confidence interval of level $1-e^{-2N\e^2}$ for $\Proba\paren{\mathfrak{R}^{\star}_{\MT} < \mathfrak{R}^{\star}_{\ST}}$.
This leads to the following p-value: 
\begin{equation*}
  \pi_1 = \left\{ \begin{array}{lr} e^{-2N\paren{\bar{B}_N-0.5}^2}  & \textrm{if }  \bar{B}_N \geq 0.5 \virg \\ 0 & \textrm{otherwise} \pt \end{array} \right.  
\end{equation*}
 
In those two settings, we also test the hypothesis $\mathbb{H}_0 = \set{\esp{\frac{\mathfrak{R}^{\star}_{\MT}}{\mathfrak{R}^{\star}_{\ST}}} > 1}$ against $\mathbb{H}_1 = \set{\esp{\frac{\mathfrak{R}^{\star}_{\MT}}{\mathfrak{R}^{\star}_{\ST}}} \leq  1}$.
Let us denote by $\empesp{\frac{\mathfrak{R}^{\star}_{\MT}}{\mathfrak{R}^{\star}_{\ST}}}$ the empirical mean of the random variables $\frac{\mathfrak{R}^{\star}_{\MT}}{\mathfrak{R}^{\star}_{\ST}}$, $\hat{\textrm{Std}}\croch{\frac{\mathfrak{R}^{\star}_{\MT}}{\mathfrak{R}^{\star}_{\ST}}}$ the resulting standard deviation and $\Phi$ the cumulative distribution function of a standard gaussian distribution. 
Then, a classical use of the central limit theorem and of Slutsky's Lemma gives that 
\begin{equation*}
\croch{ 0, \empesp{\frac{\mathfrak{R}^{\star}_{\MT}}{\mathfrak{R}^{\star}_{\ST}}}+\frac{\e}{\sqrt{n}}\hat{\textrm{Std}}\croch{\frac{\mathfrak{R}^{\star}_{\MT}}{\mathfrak{R}^{\star}_{\ST}}}}
\end{equation*}
 is an asymptotic confidence interval of level $\Phi(\e)$ for $\esp{\frac{\mathfrak{R}^{\star}_{\MT}}{\mathfrak{R}^{\star}_{\ST}}}$.
This leads to the following asymptotic p-value: 
\begin{equation*}
\pi_2  = \Phi\croch{\sqrt{n}\paren{\empesp{\frac{\mathfrak{R}^{\star}_{\MT}}{\mathfrak{R}^{\star}_{\ST}}}-1} \hat{\textrm{Std}}\croch{\frac{\mathfrak{R}^{\star}_{\MT}}{\mathfrak{R}^{\star}_{\ST}}}^{-1} } \pt
\end{equation*}
The results of those tests are shown in Table~\ref{table.A} for Setting A and in Table~\ref{table.B} for Setting B.

In Settings C and D, we use the same asymptotic framework and show error bars corresponding to the asymptotic confidence interval 
\begin{equation*}
\croch{ \empesp{\frac{\mathfrak{R}^{\star}_{\MT}}{\mathfrak{R}^{\star}_{\ST}}} - \frac{z_{0.975}}{\sqrt{n}} \hat{\textrm{Std}}\croch{\frac{\mathfrak{R}^{\star}_{\MT}}{\mathfrak{R}^{\star}_{\ST}}}, \empesp{\frac{\mathfrak{R}^{\star}_{\MT}}{\mathfrak{R}^{\star}_{\ST}}} + \frac{z_{0.975}}{\sqrt{n}} \hat{\textrm{Std}}\croch{\frac{\mathfrak{R}^{\star}_{\MT}}{\mathfrak{R}^{\star}_{\ST}}}}  
\end{equation*}
 of level $95\%$, where $z_{\a}$ denotes the quantile of order $\a$ of the standard gaussian distribution.
The results of those simulations are shown in Figure~\ref{fig.var} for Setting~C and in Figure~\ref{fig.out} for Setting~D.

We used the following values for the parameters: $n=50$, $p=5$, $\sigma^2=1$ and $C_1 = 1$. We finally settled $\d= 2$ in Settings A and B and $\d_1 = 2$ in Settings C and D.

\begin{table}[!h]
 \centering
 \begin{tabular}{|c|c|c|c|c|c|c|c|}
\hline
  $C_2$&$r = \frac{C_2}{C_1} $ & $\b$ & $\bar{B}_{100} $ & $\pi_1$ & $\empesp{\frac{\mathfrak{R}^{\star}_{\MT}}{\mathfrak{R}^{\star}_{\ST}}}$ & $\hat{\textrm{Std}}\croch{\frac{\mathfrak{R}^{\star}_{\MT}}{\mathfrak{R}^{\star}_{\ST}}}$ & $ \pi_2$ \\
\hline
  $0.01$ & $0.01$ & 2 & $1$ &  $< 10^{-15}$ & $0.434 $ & $0.0324$ &  $< 10^{-15}$ \\
\hline and
  $0.1$ & $0.1$ & 2  & $1$ &  $< 10^{-15}$ & $0.672 $ & $0.0747$ &  $< 10^{-15}$\\
\hline
  $0.5$ & $0.5$ & 2  & $0.94$ &  $< 10^{-15}$  & $0.898$ & $0.0913$ &  $< 10^{-15}$\\
\hline
  $1$ & $1$ & 2  & $ 0.51$ &  $9.80 \times 10^{-1}$ & $1.01$ & $ 0.129$ &  $0.773$\\
\hline
  $5$ & $5$ & 2  & $ 0.38$ &  $1$  & $0.998$ & $ 0.0292$ &  $0.302$\\
\hline
  $10$ & $10$ & 2  & $ 0.42$ &  $1$ & $0.996$ & $ 0.0172$ &  $9.90\times 10^{-3}$  \\
\hline
  $100$ & $100$ & 2 & $ 0.76$ &  $1.35\times 10^{-6}$ & $0.997$ & $ 5.44\times 10^{-3}$ &  $5.97\times 10^{-10}$ \\
\hline
  $0.01$ & $0.01$ & 4  &$1$ &  $< 10^{-15}$ & $0.426$ & $0.0310$ &  $< 10^{-15}$ \\
\hline
  $0.1$ & $0.1$ & 4 &  $1$ &  $< 10^{-15}$  & $0.703$ & $0.0737$ &  $< 10^{-15}$\\
\hline
  $0.5$ & $0.5$ & 4  & $0.75$ &  $3.73\times 10^{-6}$  & $0.934$ & $0.113$ &  $1.80\times 10^{-9}$ \\
\hline
  $1$ & $1$ & 4  & $ 0.31$ &  $1$ & $1.08$ & $ 0.163$ &  $1.00$  \\
\hline
  $5$ & $5$ & 4  & $ 0.38$ &  $1$ & $1.01$ & $ 0.0439$ &  $0.965$  \\
\hline
  $10$ & $10$ & 4  & $ 0.43$ &  $1$& $0.993$ & $ 0.0304$ &  $0.0113$ \\
\hline
  $100$ & $100$ & 4  & $ 0.83$ &  $3.48\times 10^{-10}$ & $0.992$ & $ 0.0103$ &  $1.22\times 10^{-14}$ \\
\hline
\end{tabular}
\caption{Comparison of the multi-task oracle risk to the single-task oracle risk in Setting A.}
\label{table.A}
\end{table}~~~~~~~~~~~~~~~~~~~~~~~~~~~~~~~~~~~~~~~~

\begin{table}[!h]
 \centering
 \begin{tabular}{|c|c|c|c|c|c|c|c|}
\hline
  $C_2$&$r = \frac{C_2}{C_1} $ & $m$ & $\bar{B}_{100} $ &  $\pi_1$  & $\empesp{\frac{\mathfrak{R}^{\star}_{\MT}}{\mathfrak{R}^{\star}_{\ST}}}$ & $\hat{\textrm{Std}}\croch{\frac{\mathfrak{R}^{\star}_{\MT}}{\mathfrak{R}^{\star}_{\ST}}}$ &  $\pi_2$ \\
\hline
  $0.01$ & $0.01$ & 2 & $1$ &  $< 10^{-15}$ & $0.570 $ & $0.0409$ &  $< 10^{-15}$\\
\hline
  $0.1$ & $0.1$ & 2  & $1$ &  $< 10^{-15}$ & $0.745$ & $0.0333$ &  $< 10^{-15}$ \\
\hline
  $0.5$ & $0.5$ & 2  & $0.99$ &  $< 10^{-15}$ & $0.907$ & $0.0406$ &  $ < 10^{-15}$\\
\hline
  $1$ & $1$ & 2  & $ 0.80$ &  $1.52 \times 10^{-8}$  & $0.961$ & $ 0.0459 $ &  $< 10^{-15}$\\
\hline
  $5$ & $5$ & 2  & $ 0.55$ &  $0.607 $ & $0.995$ & $0.205  $ &  $2.59\times 10^{-3}$\\
\hline
  $10$ & $10$ & 2  & $ 0.53$ &  $0.835 $ & $0.996$ & $ 0.114 $ &  $6.23\times 10^{-4}$ \\
\hline
  $100$ & $100$ & 2 & $ 0.81$ &  $4.50\times 10^{-9}$ & $0.996$ & $ 6.35\times 10^{-3}$ &  $1.03\times 10^{-11}$\\
\hline
  $0.01$ & $0.01$ & 4  &$1$ &  $< 10^{-15}$& $0.527$ & $0.0409$ &  $< 10^{-15}$  \\
\hline
  $0.1$ & $0.1$ & 4 &  $1$ &  $< 10^{-15}$  & $0.756$ & $0.0534$ &  $< 10^{-15}$\\
\hline
  $0.5$ & $0.5$ & 4  & $0.93$ & $< 10^{-15}$   & $0.917$ & $0.0650$ &  $ < 10^{-15}$\\
\hline
  $1$ & $1$ & 4  & $ 0.49$ &  $1$& $1.01$ & $0.0896$ &  $ 0.855 $ \\
\hline
  $5$ & $5$ & 4  & $ 0.40$ &  $1$ & $0.997$ & $0.0295$ &  $ 0.170$\\
\hline
  $10$ & $10$ & 4  & $ 0.41$ &  $1$  & $0.998$ & $0.0179$ &  $ 0.114 $ \\
\hline
  $100$ & $100$ & 4  & $ 0.84$ & $9.10\times 10^{-11}$ & $0.994$ & $ 8.71\times 10^{-3}$ &  $7.36\times 10^{-14}$  \\
\hline
\end{tabular}
\caption{Comparison of the multi-task oracle risk to the single-task oracle risk in Setting B.}
\label{table.B}
\end{table}
~~~~~~~~~~~~~~~~~~~~~~~~~~~~~~~~~~~~~~~~

\begin{figure}
\hbox{\hspace{-1cm}\includegraphics[width=1.15\textwidth]{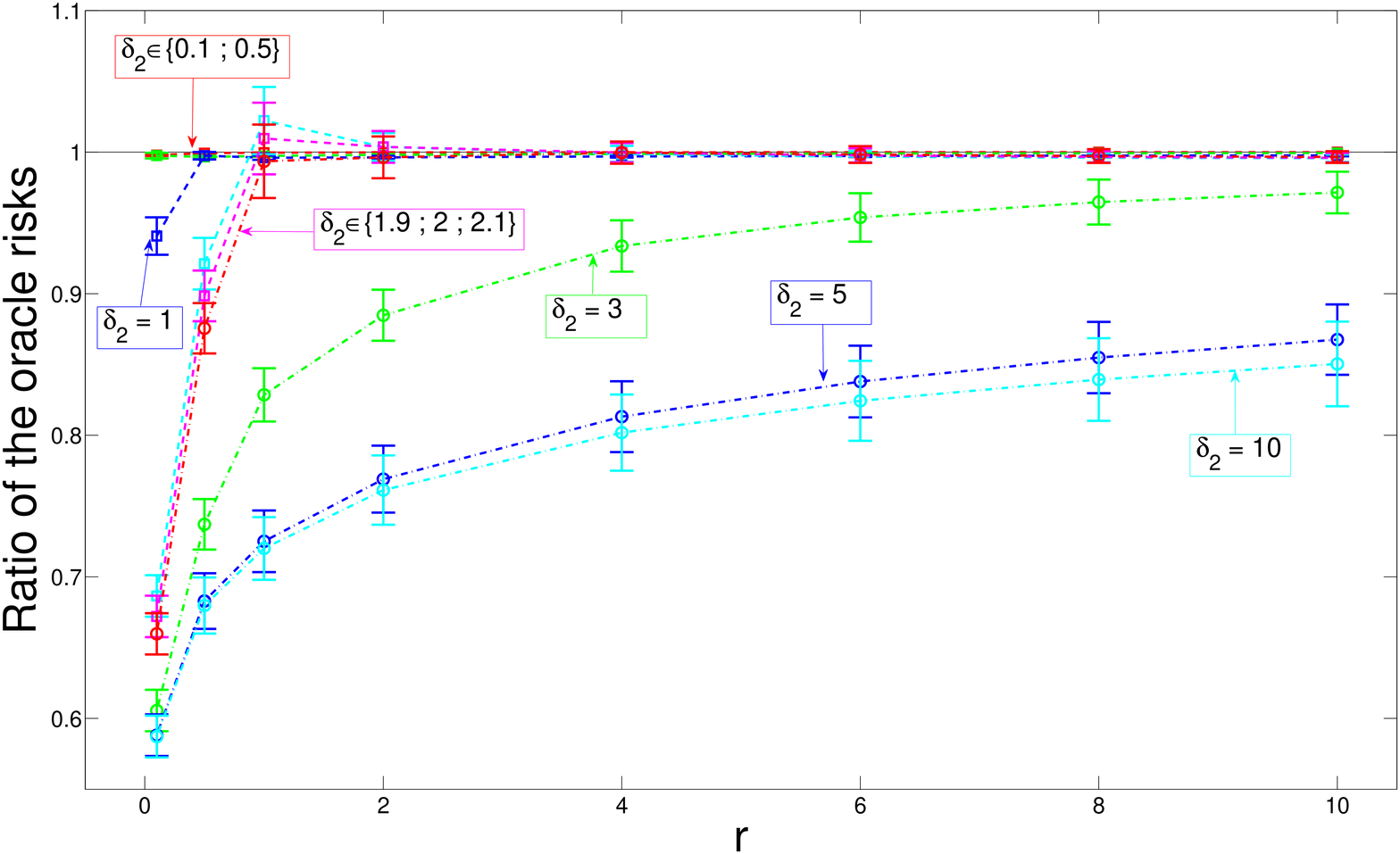}}
\caption{Further relaxation of Assumption~\eqref{2Points} (Experiment~C), improvement of multi-task compared to single-task: $\esp{\frac{\mathfrak{R}^{\star}_{\MT}}{\mathfrak{R}^{\star}_{\ST}}}$. Best seen in colour. 
}
\label{fig.var}
\end{figure}

\begin{figure}
\hbox{\hspace{-1cm}\includegraphics[width=1.15\textwidth]{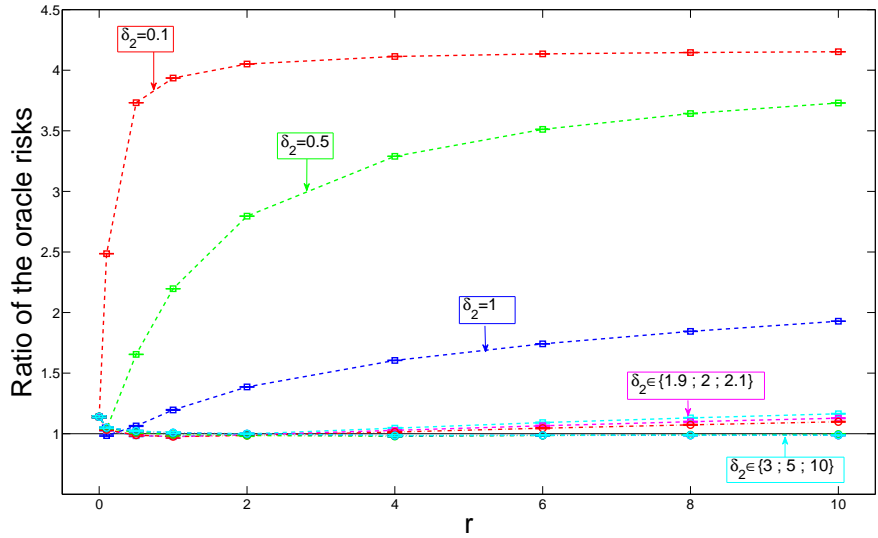}}
\caption{Relaxation of Assumption~\eqref{1Out} (Experiment~D), improvement of multi-task compared to single-task: $\esp{\frac{\mathfrak{R}^{\star}_{\MT}}{\mathfrak{R}^{\star}_{\ST}}}$. Best seen in colour.
}
\label{fig.out}
\end{figure}

\subsection{Interpretation}
  When all the tasks are grouped in one cluster (Settings A, B and C), the same phenomenon as under Assumption~\eqref{2Points} appears. 
  In situations where the mean component of the signal has more weight than the variance component (in Settings A and B, that is when $r$ is small, in Setting C, this occurs when $\d_2$ is large and $C_2$ is small) then the multi-task oracle seems to outperform the single-task one. 
  On the contrary, when the mean component of the signal is negligible compared to the variance component (likewise, this occurs in Settings A and B when $r$ is large and in Setting C when $\d_2$ is small or when $C_2$ large), then both oracles seem to perform similarly.

  Adversary settings to the multi-task oracle appear when one task is added outside of a cluster (Setting D). 
  When this outlier is less regular than the tasks belonging to the cluster (that is, when $\d_2$ is large), the single-task oracle performs better than  the multi-task one, which confirms the theoretical analysis performed in Section~\ref{sub.sec.an1Out}. 

\section{Conclusion}
This paper shows the existence of situations where the multi-task kernel ridge regression, with a perfect parameter calibration, can perform better than the single-task one. 
This happens when the tasks are distributed given simple specifications, which are studied both theoretically and on simulated examples. 

\medskip

The analysis performed here allows us to have a precise estimation of the risk of the multi-task oracle (Theorem~\ref{thm.MT.oracle}), this result holding under a few hypotheses on the regularity of the kernel, of the mean of the tasks and of its resulting variance. 
Several simple single-task settings are then investigated, with the constraint that they respect the latter assumptions. 
This theoretical grounding, backed-up by our simulated examples, allows us to understand better when and where the multi-task procedure outperforms the single-task one. 
\begin{itemize}
  \item The situation where all the regression functions are close in the RKHS (that is, their differences are extremely regular) is favorable to the multi-task procedure, when using the matrices $\M = \set{\MAV(\l,\mu),(\l,\mu)\in\R_+^2}$. 
	In this setting, the multi-task procedure can do much better than the single-task one (as if it had $p$ times more input points).
        It is also shown to never do worse (up to a multiplicative constant)~!
  \item On the contrary, when one outlier lies far apart from this cluster, this multi-task procedure suddenly performs badly, that is, arbitrarily worse than the single-task one.  
	This comes as no surprise, since the addition of a far less regular task naturally destroys the joint learning of a group of tasks. 
	In this case, the use of a multi-task procedure which clusters the tasks together (because of the choice of $\M$) is inadapted to the situation. 
\end{itemize}

\medskip

Our analysis can easily be adapted to a slightly wider set of assumptions on the tasks than the one presented here (all the tasks are grouped together, in one cluster). 
It is for instance possible to treat the case where the tasks are grouped in two (or more) clusters---when the allocation of each task to its cluster is known to the statistician, at the price of introducing more hyperparameters. 
We are still limited, though, to certain cases of hypotheses,  reflected on the set of matricial hyperparameters $\M$. 
The failure of the multi-task oracle on the case where one outlier stays outside of one group of tasks can be seen, not as the impossibility to use multi-task techniques in this situation, but rather as the fact the set of matrices used here, $\M = \set{\MAV(\l,\mu),(\l,\mu)\in\R_+^2}$, is inadapted to the situation. 
We can at least see two different solutions to this kind of inadaptation. 
First, the use of prior knowledge can help the statistician to craft an \emph{ad hoc} set $\M$.  
Second, we could seek to automatically adapt to the situation in order to learn a good set $\M$ from data. 

Learning more complex sets $\M$ is an important---but complex---challenge, that we want to address in the future. 
This question can at least be split into three (not necessarily independent) problems, that call for the elaboration of new tools: 
\begin{itemize}
  \item a careful study of the risk, to find a set $\M^{\star} \subset \mathcal{S}_p^{++} (\R)$ of candidate matrices;
  \item optimization tools, to derive an algorithm able to select a matrix in this set $\M^{\star}$;
  \item new concentration of measure results, to be able to show oracle inequalities that control the risk of the output of the algorithm.  
\end{itemize}

 \medskip
Our estimation of the multi-task oracle risk is also shown to be precise enough so that we can plug it in an oracle inequality, hereby showing the existence of a multi-task estimator that has a lower risk than the single-task oracle (under the same favorable circumstances as before).   

\medskip

Finally, it would be intereting to extend the analysis developped here to the random-design setting. 
This could be done, for instance, by using the tools brought by \citet{hsu2011analysis}, that link random-design convergence rates to fixed-design ones.

\paragraph{Acknowledgments:} The author thanks Sylvain Arlot and Francis Bach for inspireful discussions and their precious comments, which greatly helped to inprove the quality of this paper. 

\bibliographystyle{plainnat}
\bibpunct{(}{)}{;}{n}{,}{,}
\bibliography{biblio_en.bib}

\appendix
\part*{Appendices}

\section{Decomposition of the matrices $\MSD(\a,\b)$ and $\MAV(\l,\mu)$}
We now give a few technical results that were used in the former sections.
\begin{lemma} \label{lemma.M}
 The penalty used in Equation~\eqref{eq.MSD} can be obtained by using in Equation~\eqref{min.multi.M} the matrix $\MSD(\a,\b)$, such that 
\begin{equation} \label{def_MSD}
\MSD(\a,\b) =  \frac{\a}{p} \frac{\boldsymbol{1}\boldsymbol{1}\trsp}{p} + \frac{\a+p\b}{p}\paren{I_p-\frac{\boldsymbol{1}\boldsymbol{1}\trsp}{p}} \pt
\end{equation}
The penalty used in Equation~\eqref{eq.MAV} can be obtained by using in Equation~\eqref{min.multi.M} the matrix $\MAV(\a,\b)$, such that 
\begin{equation}\label{def_MAV}
 \MAV(\l,\mu) = \frac{\l}{p}\frac{\boldsymbol{1}\boldsymbol{1}\trsp}{p} + \frac{\mu}{p}\paren{I_p-\frac{\boldsymbol{1}\boldsymbol{1}\trsp}{p}}\pt
\end{equation}
\end{lemma}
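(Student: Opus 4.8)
The plan is to observe that both candidate penalties are homogeneous quadratic forms in the family $(g^j)_{j=1}^p$, and that the general regularization term $\sum_{j,\ell} M_{j,\ell}\scal{g^j}{g^{\ell}}_{\F}$ of Equation~\eqref{min.multi.M} is the most general such form (with $M$ symmetric, since only the symmetric part of $M$ is seen by the form). Hence it suffices, for each penalty, to expand it into a linear combination of the inner products $\scal{g^j}{g^{\ell}}_{\F}$, to read off the coefficient matrix $M$ from its diagonal and off-diagonal terms, and finally to check that this matrix coincides with the claimed expression $\MSD(\a,\b)$ (resp. $\MAV(\l,\mu)$).

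For the penalty of Equation~\eqref{eq.MSD}, first I would use the identity $\norm{g^j-g^k}_{\F}^2 = \norm{g^j}_{\F}^2 - 2\scal{g^j}{g^k}_{\F} + \norm{g^k}_{\F}^2$ together with $\sum_{j,k}\norm{g^j}_{\F}^2 = p\sum_j\norm{g^j}_{\F}^2$, so that the double difference term reduces to $\b\sum_j\norm{g^j}_{\F}^2 - \frac{\b}{p}\sum_{j,k}\scal{g^j}{g^k}_{\F}$. Adding the first term yields total diagonal coefficient $\frac{\a}{p}+\b-\frac{\b}{p}$ and off-diagonal coefficient $-\frac{\b}{p}$. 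On the other side, using that $\boldsymbol{1}\boldsymbol{1}\trsp/p$ has all entries equal to $1/p$, I would compute directly that $\MSD(\a,\b)$ has diagonal $\frac{\a}{p^2}+\frac{\a+p\b}{p}\cdot\frac{p-1}{p}=\frac{\a}{p}+\b-\frac{\b}{p}$ and off-diagonal $\frac{\a}{p^2}-\frac{\a+p\b}{p^2}=-\frac{\b}{p}$, which agree.

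For the penalty of Equation~\eqref{eq.MAV}, expanding $\norm{\frac{\sum_j g^j}{p}}_{\F}^2 = \frac{1}{p^2}\sum_{j,k}\scal{g^j}{g^k}_{\F}$ produces a contribution with coefficient $\frac{\l-\mu}{p^2}$ on every inner product, while $\frac{\mu}{p}\sum_j\norm{g^j}_{\F}^2$ adds $\frac{\mu}{p}$ to the diagonal only. The resulting diagonal $\frac{\mu}{p}+\frac{\l-\mu}{p^2}$ and off-diagonal $\frac{\l-\mu}{p^2}$ match the entries of $\MAV(\l,\mu)$, obtained the same way. Conceptually the match is transparent once one notes that $\boldsymbol{1}\boldsymbol{1}\trsp/p$ is the orthogonal projector onto $\linspan{\boldsymbol{1}}$ and $I_p-\boldsymbol{1}\boldsymbol{1}\trsp/p$ its complement: Equation~\eqref{eq.MAV} penalizes the mean (the component along $\boldsymbol{1}$) with weight $\l$ and the residual variance (the orthogonal component) with weight $\mu$, so $\MAV$ must be diagonal in this eigenbasis with eigenvalues $\l/p$ and $\mu/p$, which is exactly its stated form.

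There is no genuine obstacle here; the only point demanding care is the bookkeeping of the double sums and the factors of $p$ — in particular tracking which manipulations touch the diagonal entries alone and which touch every entry, and noting that the quadratic form determines only the symmetric part of $M$ (harmless, since the claimed matrices are symmetric). The SD case is marginally more delicate than the AV case, owing to the factor $p$ generated by $\sum_{j,k}\norm{g^j}_{\F}^2 = p\sum_j\norm{g^j}_{\F}^2$ in the difference term.
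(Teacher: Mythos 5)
Your proof is correct and follows essentially the same route as the paper's: both expand the penalties of Equations~\eqref{eq.MSD} and \eqref{eq.MAV} into linear combinations of the inner products $\scal{g^j}{g^{\ell}}_{\F}$ and read off the coefficient matrix, the only cosmetic difference being that you verify the match with $\MSD(\a,\b)$ and $\MAV(\l,\mu)$ entrywise while the paper reorganizes the resulting matrix algebraically into the projector form. Your entrywise computations all check out, so nothing is missing.
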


\begin{proof}
 For the first part, since
 \begin{align*}
  \sum_{j=1}^p\sum_{k=1}^p \norm{ g^j - g^k}_{\F}^2 &= \sum_{j,k} \scal{g^j}{g^j}_{\F} - 2\scal{g^j}{g^k}_{\F} +  \scal{g^k}{g^k}_{\F}  \\
  &= 2p \sum_{j=1}^p\scal{g^j}{g^j}_{\F} - 2\sum_{j,k}\scal{g^j}{g^k}_{\F}\virg	
 \end{align*}
the penalty term of Equation~\eqref{eq.MSD} can be written as
\begin{equation*}
 \frac{\a}{p} \sum_{j=1}^p \scal{g^j}{g^j}_{\F} + \b\sum_{j=1}^p \scal{g^j}{g^j}_{\F} - \frac{\b}{p} \sum_{j,k}\scal{g^j}{g^k}_{\F}\virg
\end{equation*}
leading to the matrix 
\begin{equation*}
\frac{\a+p\b}{p} I_p -  \frac{\b}{p} \boldsymbol{1}\boldsymbol{1}\trsp =  \frac{\a}{p} \frac{\boldsymbol{1}\boldsymbol{1}\trsp}{p} + \frac{\a+p\b}{p}\paren{I_p-\frac{\boldsymbol{1}\boldsymbol{1}\trsp}{p}} = \MSD(\a,\b) \pt
\end{equation*}

 For the second part, since 
\begin{equation*}
 \norm{\sum_{j=1}^p  g^j}_{\F}^2 = \sum_{j,k} \scal{g^j}{g^k}_{\F} \virg	
\end{equation*}
the penalty term of Equation~\eqref{eq.MAV} can be written as
\begin{equation*}
\frac{\l}{p^2}\sum_{j,k} \scal{g^j}{g^k}_{\F} + \frac{\mu}{p}\sum_{j=1}^p \scal{g^j}{g^j}_{\F} - \frac{\mu}{p^2} \sum_{j,k} \scal{g^j}{g^k}_{\F} \virg
\end{equation*}
leading to the matrix 
\begin{equation*}
\frac{\l-\mu}{p^2}\boldsymbol{1}\boldsymbol{1}\trsp + \frac{\mu}{p}I_p = \frac{\l}{p}\frac{\boldsymbol{1}\boldsymbol{1}\trsp}{p} + \frac{\mu}{p}\paren{I_p-\frac{\boldsymbol{1}\boldsymbol{1}\trsp}{p}}  = \MAV(\l,\mu) \pt
\end{equation*}

\end{proof}

\section{Useful control of some sums}

Let us introduce the following integrals :
\begin{align*}
I_1 = I_1(\b,\d)&=  \int_0^{+\infty} \frac{u^{\frac{1-2\d}{2\b}+1}}{(1+u)^2}du \virg \\
I_2 = I_2(\b) &=  \int_{0}^{+\infty} \frac{u^{\frac{1}{2\b}-1}}{(1+u)^2}du = I_1(\b,0) \pt
\end{align*}
Under Assumption~\eqref{hyp.minimax}, both integrals converge. We also introduce their discrete counterparts. For every $n\in\NN^{\star}$ and every $\l\in\R_+$ : 
  \begin{align*}
   S_1(n,\l) &=  \sum_{i=1}^n\frac{i^{4\b-2\d}}{(1+\l i^{2\b})^2} \virg \\
   S_2(n,\l) &= \sum_{i=1}^n \frac{1}{\left(1+\l i^{2\b}\right)^2} \pt
  \end{align*}

We here give a first elementary technical result.

\begin{lemma} \label{lemma.tstar}
 The map defined on $\R_+$ by
\begin{equation*}
 t \mapsto \frac{t^{4\b-2\d}}{(1+\l t^{2\b})^2}
\end{equation*}
is positive, increasing on $[0,t^{\star}]$ and decreasing on $[t^{\star},+\infty)$ to 0, with
\begin{equation*}
 t^{\star} = \paren{\frac{4\b-2\d}{2\d\l}}^{1/2\b}
\end{equation*}
\end{lemma}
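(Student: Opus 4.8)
The plan is to study the function $\varphi(t) = t^{4\b-2\d}(1+\l t^{2\b})^{-2}$ on $\R_+$ directly via its derivative, since it is a smooth, positive function on $(0,+\infty)$ and the claimed monotonicity structure (increase then decrease, with a single critical point $t^{\star}$) is exactly what one expects from a unimodal function. First I would note positivity is immediate: both $t^{4\b-2\d}>0$ for $t>0$ and the denominator $(1+\l t^{2\b})^2>0$, so $\varphi(t)>0$ on $(0,+\infty)$. The behaviour at the endpoints is also easy to record: since \eqref{hyp.minimax} gives $4\b-2\d > 4\b - (4\b+1) = -1$, and in the regime of Lemma~\ref{lemma.tstar} we moreover have $4\b-2\d>0$, the function tends to $0$ as $t\to 0^+$; as $t\to+\infty$ the numerator grows like $t^{4\b-2\d}$ while the denominator grows like $\l^2 t^{4\b}$, so $\varphi(t)\sim \l^{-2}t^{-2\d}\to 0$, confirming the decay to $0$ at infinity.

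The main computation is to differentiate. Writing $\varphi = N/D$ with $N = t^{4\b-2\d}$ and $D = (1+\l t^{2\b})^2$, the quotient rule gives a sign governed by $N'D - ND'$. Here $N' = (4\b-2\d)t^{4\b-2\d-1}$ and $D' = 2(1+\l t^{2\b})\cdot 2\b\l t^{2\b-1} = 4\b\l t^{2\b-1}(1+\l t^{2\b})$. The plan is to factor out the common positive powers of $t$ and the factor $(1+\l t^{2\b})$, reducing the sign of $\varphi'(t)$ to the sign of the linear-in-$t^{2\b}$ expression
\begin{equation*}
 (4\b-2\d)(1+\l t^{2\b}) - 4\b\l t^{2\b} = (4\b-2\d) - 2\d\l t^{2\b} \pt
\end{equation*}
This vanishes precisely when $\l t^{2\b} = (4\b-2\d)/(2\d)$, that is, at
\begin{equation*}
 t^{\star} = \paren{\frac{4\b-2\d}{2\d\l}}^{1/2\b} \virg
\end{equation*}
matching the claimed value. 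Since the bracketed expression is strictly decreasing in $t^{2\b}$ (its coefficient $-2\d\l$ is negative), it is positive for $t<t^{\star}$ and negative for $t>t^{\star}$; hence $\varphi' >0$ on $(0,t^{\star})$ and $\varphi'<0$ on $(t^{\star},+\infty)$, giving the asserted increase then decrease.

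I do not anticipate a genuine obstacle here: the result is a single-variable calculus exercise, and the only point requiring a little care is the bookkeeping of which regime of $\b,\d$ is in force. The statement needs $4\b-2\d>0$ for $t^{\star}$ to be a positive real number and for the function to genuinely start by increasing from $0$; under \eqref{hyp.minimax} alone one only has $4\b-2\d>-1$, so strictly speaking the clean ``increasing then decreasing'' picture uses the stronger $2\d<4\b$ that appears elsewhere in the paper. I would simply verify that the factored derivative has the claimed sign on each subinterval under the relevant hypothesis, and confirm that the critical point is the unique interior zero of $\varphi'$, so that the monotonicity changes exactly once at $t^{\star}$.
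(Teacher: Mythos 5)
Your proof is correct and follows essentially the same route as the paper: differentiate, factor out the positive terms $t^{4\b-2\d-1}(1+\l t^{2\b})^{-3}$, and reduce the sign of the derivative to that of $(4\b-2\d)-2\d\l t^{2\b}$, whose unique zero is $t^{\star}$. If anything, you are slightly more careful than the paper, which leaves the sign analysis on each side of $t^{\star}$ implicit and does not comment, as you rightly do, that the clean increasing-then-decreasing picture requires $4\b-2\d>0$ rather than just Assumption \HypM{}.
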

\begin{proof}
This map is nonnegative and converges to $0$ in 0 and $+\infty$. Furthermore
\begin{align*}
 \frac{d}{dt}\paren{\frac{t^{4\b-2\d}}{(1+\l t^{2\b})^2}} &= (4\b-2\d)\frac{t^{4\b-2\d-1}}{(1+\l t^{2\b})^2} - 4\b\l t^{2\b-1}\frac{t^{4\b-2\d}}{(1+\l t^{2\b})^3}\\
&= \frac{t^{4\b-2\d-1}}{(1+\l t^{2\b})^3} \croch{(4\b-2\d)(1+\l t^{2\b}) - 4\b\l t^{2\b}} \\
&= \frac{t^{4\b-2\d-1}}{(1+\l t^{2\b})^3} \croch{4\b+4\b\l t^{2\b}-2\d-2\d\l t^{2\b}- 4\b\l t^{2\b}} \\
&= \frac{t^{4\b-2\d-1}}{(1+\l t^{2\b})^3} \croch{(4\b-2\d)-2\d\l t^{2\b}} \pt
\end{align*}
The only parameter $t^{\star}$ that cancels out this equation is
\begin{equation*}
 t^{\star} = \paren{\frac{4\b-2\d}{2\d\l}}^{1/2\b} \pt
\end{equation*}
\end{proof}

We now give a serie of technical results to control $I_1$, $I_2$, $S_1$ and $S_2$, which will be useful in the following sections.

\begin{lemma}\label{lemma.int1}
 \begin{equation*}
  \int_0^{+\infty} \frac{t^{4\b-2\d}}{(1+\l t^{2\b})^2} dt = \frac{\l^{(2\d-1)/2\b}}{2\b\l^2}\int_0^{+\infty} \frac{u^{\frac{1-2\d}{2\b}+1}}{(1+u)^2}du = \frac{\l^{(2\d-1)/2\b}}{2\b\l^2} I_1 \pt
 \end{equation*}
\end{lemma}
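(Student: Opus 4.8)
The plan is to establish this identity by the single change of variables $u = \l t^{2\b}$, which transforms the left-hand integral directly into the claimed multiple of $I_1$.

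First I would set $u = \l t^{2\b}$, so that $t = \paren{u/\l}^{1/2\b}$ and, differentiating, $dt = \frac{1}{2\b}\l^{-1/2\b}u^{(1/2\b)-1}\,du$. Under this substitution the denominator becomes simply $(1+\l t^{2\b})^2 = (1+u)^2$, while the numerator factor is $t^{4\b-2\d} = \l^{-(4\b-2\d)/2\b}\,u^{(4\b-2\d)/2\b}$. Since $t\mapsto \l t^{2\b}$ is an increasing bijection of $\R_+$ onto itself, the bounds of integration remain $0$ and $+\infty$.

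Next I would collect the resulting powers of $\l$ and of $u$. The powers of $\l$ pulled outside the integral sum to $-\frac{4\b-2\d}{2\b}-\frac{1}{2\b}=-\frac{4\b-2\d+1}{2\b}$, and a short rearrangement shows this equals $\frac{2\d-1}{2\b}-2$, which is precisely the exponent in the prefactor $\frac{\l^{(2\d-1)/2\b}}{2\b\l^2}$. Inside the integral the exponent of $u$ becomes $\frac{4\b-2\d}{2\b}+\frac{1}{2\b}-1 = \frac{1-2\d}{2\b}+1$, matching exactly the integrand defining $I_1$. Assembling these factors yields the stated equality.

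There is no genuine difficulty here: the only care required is the bookkeeping of the two exponents, together with the observation that both integrals converge under Assumption~\eqref{hyp.minimax}. Indeed, near $u=0$ the integrand of $I_1$ behaves like $u^{(1-2\d)/2\b+1}$, which is integrable exactly when $2\d<4\b+1$, while near $u=+\infty$ it behaves like $u^{(1-2\d)/2\b-1}$, integrable exactly when $2\d>1$; these are the two halves of~\eqref{hyp.minimax}.
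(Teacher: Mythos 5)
Your proof is correct and follows exactly the paper's route: the paper's entire proof is ``apply the change of variables $u = \l t^{2\b}$'' (citing \citet{bach2012_column_sampling}), and your exponent bookkeeping for both $\l$ and $u$ checks out. Your added verification that~\eqref{hyp.minimax} is precisely what makes $I_1$ converge at both endpoints is a nice touch the paper leaves implicit.
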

\begin{proof}
Apply the change of variables $u = \l t^{2\b}$ see  \citet{bach2012_column_sampling} for more details.
\end{proof}

\begin{lemma}\label{lemma.int2}
  \begin{equation*}
  \int_0^{+\infty} \frac{1}{(1+\l t^{2\b})^2} dt = \frac{\l^{-1/2\b}}{2\b} \int_{0}^{+\infty} \frac{u^{\frac{1-2\b}{2\b}}}{(1+u)^2}du = \frac{\l^{-1/2\b}}{2\b} I_2 \pt
 \end{equation*}
\end{lemma}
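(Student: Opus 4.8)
The statement to prove is
\begin{equation*}
  \int_0^{+\infty} \frac{1}{(1+\l t^{2\b})^2} dt = \frac{\l^{-1/2\b}}{2\b} \int_{0}^{+\infty} \frac{u^{\frac{1-2\b}{2\b}}}{(1+u)^2}du = \frac{\l^{-1/2\b}}{2\b} I_2 \pt
\end{equation*}

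This is the exact analogue of the computation already recorded in Lemma~\ref{lemma.int1}, so the plan is to follow the same recipe. First I would perform the change of variables $u = \l t^{2\b}$, exactly as suggested there. Differentiating gives $du = 2\b \l t^{2\b-1}\, dt$, so that $dt = \frac{1}{2\b\l} t^{1-2\b}\, du$. The limits are preserved: $t=0$ maps to $u=0$ and $t\to+\infty$ maps to $u\to+\infty$, since $\l>0$ and $\b>0$.

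Next I would re-express $t$ in terms of $u$ to eliminate it from $dt$. From $u = \l t^{2\b}$ we get $t = (u/\l)^{1/2\b}$, hence $t^{1-2\b} = (u/\l)^{(1-2\b)/2\b}$. Substituting into the integral and pulling all factors of $\l$ outside yields
\begin{equation*}
  \int_0^{+\infty} \frac{1}{(1+\l t^{2\b})^2}\, dt = \frac{1}{2\b\l}\,\l^{-(1-2\b)/2\b}\int_0^{+\infty}\frac{u^{(1-2\b)/2\b}}{(1+u)^2}\, du \pt
\end{equation*}
The power of $\l$ in front simplifies to $\l^{-1 - (1-2\b)/2\b} = \l^{-1/2\b}$, which produces the claimed prefactor $\l^{-1/2\b}/(2\b)$ and identifies the remaining integral as $I_2$ by its definition. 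Convergence of $I_2$ is guaranteed under Assumption~\eqref{hyp.minimax}, as already noted when the integrals were introduced.

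There is no genuine obstacle here: the only thing to watch is the bookkeeping of the exponents of $\l$, making sure the factor $\l^{-1}$ from $dt$ combines correctly with the factor $\l^{-(1-2\b)/2\b}$ coming from $t^{1-2\b}$ to give precisely $\l^{-1/2\b}$. One should also double-check that the exponent on $u$ in the resulting integrand, $(1-2\b)/2\b = \frac{1}{2\b}-1$, matches the exponent appearing in the definition of $I_2 = I_1(\b,0)$, which it does upon setting $\d=0$ in the formula for $I_1$.
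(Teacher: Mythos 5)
Your proof is correct and is essentially identical to the paper's: the paper's entire proof of this lemma consists of the same change of variables $u = \l t^{2\b}$ (with a pointer to \citet{bach2012_column_sampling} for details), and your exponent bookkeeping $\l^{-1}\cdot\l^{-(1-2\b)/2\b} = \l^{-1/2\b}$ and the identification $(1-2\b)/2\b = \frac{1}{2\b}-1$ are exactly right. One small caveat: your closing sanity check inherits a typo from the paper itself, since setting $\d=0$ in $I_1(\b,\d)$ gives exponent $\frac{1-0}{2\b}+1 = \frac{1}{2\b}+1$, not $\frac{1}{2\b}-1$ (one would in fact need $\d = 2\b$, i.e.\ $I_2 = I_1(\b,2\b)$); this is immaterial for the lemma, because the substituted integrand matches the explicit integral defining $I_2$ directly.
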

\begin{proof}
Apply the change of variables $u = \l t^{2\b}$ see  \citet{bach2012_column_sampling} for more details.
\end{proof}

\begin{lemma}\label{lemma.S2}
 We have the following bounds $S_2$. For every $n \in \NN^{\star}$ and every $\l \in \R_+^{\star}$,
\begin{itemize}
\item 
 \begin{equation*}
 S_2(n,\l) \leq  \frac{\l^{-1/2\b}}{2\b} I_2  \pt
 \end{equation*}
\item 
 \begin{equation*}
 S_2(n,\l) \geq  \int_1^{n+1}  \frac{1}{(1+\l t^{2\b})^2} dt  \pt
 \end{equation*}
\end{itemize}
\end{lemma}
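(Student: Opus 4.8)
The plan is to obtain both bounds by the standard comparison between a monotone sum and its associated integral. The first step is to observe that the map $g : t \mapsto (1+\l t^{2\b})^{-2}$ is positive and nonincreasing on $\R_+$: indeed $t \mapsto \l t^{2\b}$ is nondecreasing for $\l \geq 0$ and $\b>0$, so $1+\l t^{2\b}$ is nondecreasing and its inverse square is nonincreasing. This monotonicity is the only structural fact needed, and it requires no computation beyond noting the sign of the exponent.

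For the upper bound, I would use that a nonincreasing $g$ satisfies $g(i) \leq \int_{i-1}^{i} g(t)\,dt$ for every $i \geq 1$. Summing over $i \in \{1,\dots,n\}$ telescopes the integration domains:
\begin{equation*}
 S_2(n,\l) = \sum_{i=1}^n g(i) \leq \sum_{i=1}^n \int_{i-1}^{i} g(t)\,dt = \int_0^{n} g(t)\,dt \leq \int_0^{+\infty} g(t)\,dt \virg
\end{equation*}
where the last inequality uses $g \geq 0$ to extend the domain to all of $\R_+$. It then suffices to invoke Lemma~\ref{lemma.int2}, which evaluates $\int_0^{+\infty} (1+\l t^{2\b})^{-2}\,dt = \frac{\l^{-1/2\b}}{2\b} I_2$, yielding exactly the claimed bound.

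For the lower bound, I would instead use the complementary inequality $g(i) \geq \int_{i}^{i+1} g(t)\,dt$, again valid because $g$ is nonincreasing. Summing over $i \in \{1,\dots,n\}$ gives
\begin{equation*}
 S_2(n,\l) = \sum_{i=1}^n g(i) \geq \sum_{i=1}^n \int_{i}^{i+1} g(t)\,dt = \int_1^{n+1} g(t)\,dt \virg
\end{equation*}
which is precisely the stated lower bound, with no further evaluation required.

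I do not expect any genuine obstacle here; the argument is entirely routine. The only points deserving a word of care are the justification of monotonicity of $g$ (immediate from $\l,\b \geq 0$) and the harmless extension of the integral from $[0,n]$ to $[0,+\infty)$ in the upper bound, which is legitimate precisely because the integrand is nonnegative. The convergence of the integral on $[0,+\infty)$ is guaranteed under Assumption~\eqref{hyp.minimax} as already recorded when $I_2$ was introduced.
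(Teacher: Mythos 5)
Your proof is correct and follows essentially the same route as the paper: the paper's own proof consists of the same sum--integral comparison $S_2(n,\l) \leq \int_0^n (1+\l t^{2\b})^{-2}\,dt \leq \int_0^{+\infty}(1+\l t^{2\b})^{-2}\,dt$ followed by the change of variables in Lemma~\ref{lemma.int2}, with the lower bound declared ``likewise straightforward.'' You have merely made explicit the monotonicity justification that the paper leaves implicit, which is fine.
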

\begin{proof}
To show the first point we just remark that 
\begin{equation*}
 S_2(n,\l) = \sum_{i=1}^n \frac{1}{\left(1+\l i^{2\b}\right)^2} \leq \int_0^{n} \frac{1}{(1+\l t^{2\b})^2} dt \leq \int_0^{+\infty} \frac{1}{(1+\l t^{2\b})^2} dt \pt
\end{equation*}
The second point is likewise straightforward.
\end{proof}

\begin{lemma} \label{lemma.S1}
 We have the following bounds on $S_1$  : for every $n \in \NN^{\star}$, every $(\b,\d)\in\R_+^2$ such that $4\b>2\d$ and every $\l \in \R_+^{\star}$,
 \begin{equation*}
 S_1(n,\l) \leq   \frac{\l^{(2\d-1)/2\b}}{\b\l^2} I_1 \virg
 \end{equation*}
 Furthermore, let 
 \begin{equation*}
 t^{\star} = \paren{\frac{4\b-2\d}{2\d\l}}^{1/2\b}
\end{equation*}
and $n^{\star} = \lfloor  t^{\star} \rfloor$.  
\begin{itemize}
 \item If $n^{\star} < n-1$
 \begin{equation*}
 S_1(n,\l) \geq   \int_0^{n+1} \frac{t^{4\b-2\d}}{(1+\l t^{2\b})^2} dt   - \int_{n^{\star}}^{n^{\star}+2} \frac{t^{4\b-2\d}}{(1+\l t^{2\b})^2} dt\ptvirg
 \end{equation*}
 \item while if $n^{\star} \geq n$
  \begin{equation*}
   S_1(n,\l) \geq   \int_0^{n} \frac{t^{4\b-2\d}}{(1+\l t^{2\b})^2} dt  \pt
  \end{equation*}
\end{itemize}
\end{lemma}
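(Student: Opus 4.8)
The plan is to compare the sum $S_1(n,\l)$ with integrals of the function $g(t) = \frac{t^{4\b-2\d}}{(1+\l t^{2\b})^2}$, exploiting the shape of $g$ given by Lemma~\ref{lemma.tstar}: since $4\b>2\d$, the point $t^{\star}=\paren{\frac{4\b-2\d}{2\d\l}}^{1/2\b}$ is well defined and $g$ is nonnegative, increasing on $[0,t^{\star}]$ and decreasing on $[t^{\star},+\infty)$. The value of the full integral $\int_0^{+\infty}g(t)\,dt = \frac{\l^{(2\d-1)/2\b}}{2\b\l^2}I_1$ is already available from Lemma~\ref{lemma.int1}, so once the sum is framed by integrals the closed forms follow immediately.

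For the upper bound, I would compare each term $g(i)$ to a unit integral on the side where $g$ dominates it. On the increasing range, whenever $g$ is nondecreasing on $[i,i+1]$ one has $g(i)\le\int_i^{i+1}g$, and these disjoint intervals contribute a total bounded by $\int_0^{+\infty}g$; on the decreasing range, whenever $g$ is nonincreasing on $[i-1,i]$ one has $g(i)\le\int_{i-1}^i g$, again summing to at most $\int_0^{+\infty}g$. Adding the two contributions yields $S_1(n,\l)\le 2\int_0^{+\infty}g = \frac{\l^{(2\d-1)/2\b}}{\b\l^2}I_1$, the leftover term(s) sitting over the interval that straddles $t^{\star}$ being absorbed in the factor-two slack (the peak value $g(t^{\star})$ is itself controlled by the integral in the relevant regime $t^{\star}\ge 1$, i.e.\ $\l$ small).

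For the lower bound, I would reverse the comparison using the same monotonicity. On $[0,t^{\star}]$ one has $g(i)\ge\int_{i-1}^i g$, and on $[t^{\star},+\infty)$ one has $g(i)\ge\int_i^{i+1}g$. Setting $n^{\star}=\lfloor t^{\star}\rfloor$, summing the increasing terms over $i\le n^{\star}$ recovers $\int_0^{n^{\star}}g$, and summing the decreasing terms over $i\ge n^{\star}+1$ recovers $\int_{n^{\star}+1}^{n+1}g$; together these give $\int_0^{n+1}g$ minus the single interval $[n^{\star},n^{\star}+1]$ around the peak, which is at most $\int_{n^{\star}}^{n^{\star}+2}g$ since $g\ge0$. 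This is exactly the claimed bound when $n^{\star}<n-1$. When instead $n^{\star}\ge n$, the peak lies beyond the summation range, $g$ is increasing on all of $[0,n]$, so $g(i)\ge\int_{i-1}^i g$ for every $i\le n$ and $S_1(n,\l)\ge\int_0^n g$ directly.

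The main obstacle is the bookkeeping around the peak $t^{\star}$: deciding, for the one or two indices adjacent to $t^{\star}$, which unit interval to compare against, and verifying that the resulting defect is precisely captured by the correction $\int_{n^{\star}}^{n^{\star}+2}g$ in the lower bound and harmlessly swallowed by the factor of two in the upper bound. Everything else reduces to telescoping disjoint unit integrals together with the substitution $u=\l t^{2\b}$ already carried out in Lemma~\ref{lemma.int1}.
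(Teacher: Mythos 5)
Your proposal follows essentially the same route as the paper's proof: the same unimodality input from Lemma~\ref{lemma.tstar}, the same unit-interval integral comparisons split at $n^{\star} = \lfloor t^{\star} \rfloor$, the same factor-two absorption giving $S_1(n,\l) \leq 2\int_0^{+\infty} \frac{t^{4\b-2\d}}{(1+\l t^{2\b})^2}\,dt$ via Lemma~\ref{lemma.int1}, and the same correction term $\int_{n^{\star}}^{n^{\star}+2}$ (yours is in fact marginally tighter, dropping only $[n^{\star},n^{\star}+1]$ before enlarging to match the statement) together with the same easy monotone case when $n^{\star} \geq n$. Your bookkeeping at the indices straddling the peak is handled at the same level of informality as in the paper itself, so the argument is correct to the paper's own standard.
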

\begin{proof}
 Lemma \ref{lemma.tstar} shows that $t \mapsto \frac{t^{4\b-2\d}}{(1+\l t^{2\b})^2}$ is increasing on $[0,t^{\star}]$ and decreasing on $[t^{\star},+\infty($. Thus we have the following comparisons :
\begin{equation*}
     \int_0^{n^{\star}} \frac{t^{4\b-2\d}}{(1+\l t^{2\b})^2} dt \leq \sum_{i=1}^{n^{\star}}\frac{i^{4\b-2\d}}{(1+\l i^{2\b})^2}  \leq  \int_1^{n^{\star}+1} \frac{t^{4\b-2\d}}{(1+\l t^{2\b})^2} dt
\end{equation*}
and
\begin{equation*}
     \int_{n^{\star}+2}^{n+1} \frac{t^{4\b-2\d}}{(1+\l t^{2\b})^2} dt \leq \sum_{i=n^{\star}+1}^n\frac{i^{4\b-2\d}}{(1+\l i^{2\b})^2}  \leq  \int_{n^{\star}}^n \frac{t^{4\b-2\d}}{(1+\l t^{2\b})^2} dt \pt
\end{equation*}
By adding those two lines we get 
 \begin{align*}
 S_1(n,\l) &=  \sum_{i=1}^n\frac{i^{4\b-2\d}}{(1+\l i^{2\b})^2}  \leq \int_1^{n^{\star}+1} \frac{t^{4\b-2\d}}{(1+\l t^{2\b})^2}dt + \int_{n^{\star}}^n \frac{t^{4\b-2\d}}{(1+\l t^{2\b})^2} dt \\
           &\leq 2 \int_0^{+\infty} \frac{t^{4\b-2\d}}{(1+\l t^{2\b})^2} dt \virg
 \end{align*}
which shows the first point. We also get, if $n^{\star} < n-1$
 \begin{align*}
 S_1(n,\l) & \geq   \int_0^{n^{\star}} \frac{t^{4\b-2\d}}{(1+\l t^{2\b})^2} dt +  \int_{n^{\star}+2}^{n+1} \frac{t^{4\b-2\d}}{(1+\l t^{2\b})^2} dt \\
    & \geq   \int_0^{n+1} \frac{t^{4\b-2\d}}{(1+\l t^{2\b})^2} dt   - \int_{n^{\star}}^{n^{\star}+2} \frac{t^{4\b-2\d}}{(1+\l t^{2\b})^2} dt\pt
 \end{align*}
The last point is evident, since if $n^{\star} \geq n$ the integrand is increasing on $[0,n]$.
\end{proof}

\section{Proof of Property~\ref{prop.maj.risk} \label{app.proof.prop.maj.risk}}
Let $n$ and $p$ be integers, $\s$, $\b$ and $\d$ real numbers such that \eqref{hyp.minimax} hold. We want to study the value and the location of the infimum on $\R_+$ of 
\begin{equation} \label{eq.def.R}
 \l \longmapsto R(n,p,\s^2,\l,\b,\d,C) = C\l^2 \sum_{i=1}^n\frac{i^{4\b-2\d}}{(1+\l i^{2\b})^2}+ \frac{\s^2}{np} \sum_{i=1}^n \frac{1}{\left(1+\l i^{2\b}\right)^2}
\end{equation}

\begin{property}
 For every $\l$ in $\R_+$, we have 
\begin{equation} \label{eq.maj.R}
 R(n,p,\s^2,\l,\b,\d,C) \leq \frac{CI_1}{\b }\l^{(2\d-1)/2\b} + \frac{\s^2I_2}{2\b np}\l^{-1/2\b} \pt
\end{equation}
\end{property}
\begin{proof}
 This is a straightforward application of the majorations of the finite sums by integrals given in Lemmas~\ref{lemma.S2} and \ref{lemma.S1}, together with the change of variables done in Lemmas~\ref{lemma.int1} and \ref{lemma.int2}.
\end{proof}

\begin{lemma}\label{lemma.optimize}
Let $A \in \R_+$, the minimum over $\R_+^{\star}$ of $\l \mapsto \l^{(2\d-1)/2\b}+A\l^{-1/2\b}$ is attained for 
\begin{equation*}
 \l^{\star} = \paren{\frac{A}{2\d-1}}^{\b/\d}
\end{equation*}
and has for value
\begin{equation*}
A^{1-(1/2\d)} \frac{2\d}{(2\d-1)^{1-(1/2\d)}} \pt
\end{equation*}
\end{lemma}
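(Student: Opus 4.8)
The plan is to treat this as a one-variable calculus problem with two positive exponents. Writing $a = (2\d-1)/(2\b)$ and $b = 1/(2\b)$, Assumption~\eqref{hyp.minimax} guarantees $2\d-1>0$, so both $a$ and $b$ are strictly positive. I set $g(\l) = \l^a + A\l^{-b}$ and work under $A>0$, which is the case relevant to the application of the upper bound~\eqref{eq.maj.R} (for $A=0$ the infimum $0$ is not attained on $\R_+^{\star}$).

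First I would differentiate, $g'(\l) = a\l^{a-1} - Ab\l^{-b-1}$, and solve $g'(\l)=0$, which is equivalent to $\l^{a+b} = Ab/a$. The two exponent identities $a+b = \d/\b$ and $b/a = 1/(2\d-1)$ then produce exactly the announced critical point $\l^{\star} = \paren{A/(2\d-1)}^{\b/\d}$.

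Next I would argue that this critical point is the global minimizer. Since $a,b>0$ and $A>0$, we have $g(\l)\to+\infty$ both as $\l\to 0^+$ (driven by the term $A\l^{-b}$) and as $\l\to+\infty$ (driven by $\l^a$); being continuous on $(0,+\infty)$, $g$ therefore attains its infimum at an interior point, which must be a zero of $g'$. As $\l^{\star}$ is the unique solution of $g'(\l)=0$ on $\R_+^{\star}$, it is the global minimizer.

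Finally I would substitute $\l^{\star}$ back into $g$. Using $a\b/\d = (2\d-1)/(2\d)$ and $b\b/\d = 1/(2\d)$ one obtains $(\l^{\star})^a = \paren{A/(2\d-1)}^{1-(1/2\d)}$ and $A(\l^{\star})^{-b} = A^{1-(1/2\d)}(2\d-1)^{1/2\d}$. Factoring out $A^{1-(1/2\d)}(2\d-1)^{-(1-(1/2\d))}$ leaves the bracket $1 + (2\d-1)$, and the identity $1+(2\d-1)=2\d$ collapses it to give the claimed value $A^{1-(1/2\d)}\frac{2\d}{(2\d-1)^{1-(1/2\d)}}$. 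The only genuine obstacle is the exponent bookkeeping in this last step; the observation that makes the two terms combine cleanly is precisely $1+(2\d-1)=2\d$.
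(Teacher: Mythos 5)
Your proof is correct and takes essentially the same route as the paper's: differentiate, use the limits $+\infty$ at $0^+$ and $+\infty$ to locate the global minimizer at the unique critical point $\l^{\star}=\paren{A/(2\d-1)}^{\b/\d}$, and substitute back, with the identity $1+(2\d-1)=2\d$ collapsing the two terms. Your explicit remark that $A>0$ is needed (for $A=0$ the infimum over $\R_+^{\star}$ is not attained) is a small refinement that the paper's proof leaves implicit.
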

\begin{proof}
 This mapping is differentiable and has $+\infty$ for limit in $0$ and in $+\infty$. Then 
\begin{equation*}
 \frac{d}{d\l}\paren{\l^{2\d/(2\d-1)}+A\l^{-1/2\b}} = \frac{1}{\l}\paren{\frac{2\d-1}{2\b}\l^{(2\d-1)/2\b}-\frac{A}{2\b}\l^{-1/2\b}} \pt
\end{equation*}
We see there is only one minimizer $\l^{\star}$ verifying
\begin{equation*}
\begin{array}{lrcl}%
 & \frac{2\d-1}{2\b}(\l^{\star})^{(2\d-1)/2\b} &=& \frac{A}{2\b}(\l^{\star})^{-1/2\b} \\%
\Leftrightarrow &(2\d-1)^{2\b}(\l^{\star})^{2\d-1} &=& A^{2\b}(\l^{\star})^{-1} \\%
\Leftrightarrow &(\l^{\star})^{2\d} &=& \frac{A^{2\b}}{(2\d-1)^{2\b}} \\%
\Leftrightarrow &\l^{\star} &=& \paren{\frac{A}{2\d-1}}^{\b/\d} \pt
\end{array}%
\end{equation*}
Pluging-in the value of $\l^{\star}$ leads to the optimal value
\begin{align*}
 \paren{\frac{A}{2\d-1}}^{(2\d-1)/2\d} + A\paren{\frac{A}{2\d-1}}^{-1	/2\d} &= A^{(2\d-1)/2\d} \paren{(2\d-1)^{(1/2\d)-1}+(2\d-1)^{1/2\d}}\\
&= A^{(2\d-1)/2\d} (2\d-1)^{1/2\d} \paren{\frac{1}{2\d-1}+1}\\
&= A^{(2\d-1)/2\d} (2\d-1)^{1/2\d} \paren{\frac{2\d}{2\d-1}}\\
&= A^{(2\d-1)/2\d} \frac{2\d}{(2\d-1)^{1-(1/2\d)}} \pt
\end{align*}
\end{proof}

\begin{definition}
 To simplify notations, since this quantity depends only on $\b$ and $\d$ and appears throughout the paper, we will use the following notation : 
 \begin{equation}\label{def.kbd}
  \kappa(\b,\d) = I_1(\b,\d)^{1/2\d}I_2(\b)^{1-(1/2\d)}(2\d-1)^{1/2\d}\frac{\d}{\b(2\d-1)} \pt
 \end{equation}
\end{definition}

We now prove Property~\ref{prop.maj.risk}

\begin{proof}
 First $R(n,p,\s^2,0,\b,\d,C) = \frac{\s^2}{p}$, so that $R^{\star}(n,p,\s^2,\b,\d,C) \leq  \frac{\s^2}{p}$. Then, the right-hand side of Equation~\eqref{eq.maj.R} can be written as 
 \begin{equation*}
  \frac{CI_1}{\b}\croch{\l^{(2\d-1)/2\b} + \frac{\s^2I_2}{2npCI_1}\l^{-1/2\b}}\pt
 \end{equation*}
Consequently, Lemma \ref{lemma.optimize} implies that the optimal value of this upper bound with respect to $\l$ is 
\begin{equation*}
  \frac{CI_1}{\b} \paren{\frac{\s^2I_2}{2npCI_1}}^{1-(1/2\d)}\frac{2\d}{(2\d-1)^{1-(1/2\d)}} \virg
\end{equation*}
which is exactly the right-hand side of Equation~\eqref{eq.maj.R.opt}.
\end{proof}

\section{Proof of Property~\ref{prop.param.reg.maj} \label{proof.prop.param.reg.maj}}

In order to perform this analysis we observe that $R$ is composed of two factors :
\begin{itemize}
 \item a bias factor \matheq{C\l^2 \sum_{i=1}^n\frac{i^{4\b-2\d}}{(1+\l i^{2\b})^2}}, which is an increasing function of $\l$;
 \item a variance factor \matheq{\frac{\s^2}{np} \sum_{i=1}^n \frac{1}{\left(1+\l i^{2\b}\right)^2 }}, which is a convex, decreasing function of $\l$. 
\end{itemize}
We show that, if $\l$ is too large, then the bias term exceeds the upper bound on  $R^{\star}(n,p,\s^2,\b,\d,C)$ given in Equation~\eqref{eq.maj.R.opt}.

\begin{proof}
 We see that, using Equation~\eqref{eq.def.R}, for every $\l \in \R_+$, 
 \begin{equation*}
   R(n,p,\s^2,\l,\b,\d,C) \geq C\frac{\l^2}{(1+\l)^2} \pt
 \end{equation*}
The right-hand side of this equation is increasing. Thus, if a real number $\e$ matches this bound with the upper bound of $R^{\star}$, that is, 
\begin{equation*}
 C\frac{\e^2}{(1+\e)^2} =  \frac{1}{np}\times (np)^{1/2\d}C^{(1/2\d)}2^{1/2\d}\kbd \virg
\end{equation*}
 we can state that the infimum of $R$ is attained by a parameter $\l^{\star} \in [0,\e]$. The latter equation is equivalent to 
\begin{equation*}
 \e^2 =  A\paren{\frac{np}{\s^2}}^{(1/2\d)-1}(1+\e)^2 \virg
\end{equation*}
with
\begin{equation*}
 A = C^{(1/2\d)-1}2^{1/2\d}\kbd \pt
\end{equation*}
This leads to 
\begin{equation*}
 \e\paren{1-\sqrt{A}\paren{\frac{np}{\s^2}}^{(1/4\d)-1/2}} = \sqrt{A}\paren{\frac{np}{\s^2}}^{(1/4\d)-2}  \virg
\end{equation*}
so that  if $\sqrt{A} \paren{\frac{np}{\s^2}}^{(1/4\d)-1/2}<1$ that is, if 
 \begin{equation*}
  \frac{np}{\s^2} >  \frac{1}{C} \times 2^{\frac{1}{2\d-1}}\times\kbd^{\frac{2\d}{2\d-1}} \virg
 \end{equation*}
then
\begin{equation}\label{eq.def.epsilon}
 \e = \frac{ \sqrt{A} \paren{\frac{np}{\s^2}}^{(1/4\d)-1/2}}{1-\sqrt{A}\paren{\frac{np}{\s^2}}^{(1/4\d)-1/2}} =  \sqrt{A} \paren{\frac{np}{\s^2}}^{(1/4\d)-1/2}\paren{1+\eta\paren{\frac{np}{\s^2}}} \virg
\end{equation}
where $\eta(x)$ goes to 0 as $x$ goes to $+\infty$.
\end{proof}

\section{On the way to showing Property~\ref{prop.min.risk}}
The proof of Property~\ref{prop.min.risk} uses two results that we give here. 
\subsection{Control of the risk on $\croch{0,n^{-2\b}}$ \label{sec.proof.prop.min.risk1}}
\begin{property}\label{prop.min.risk.l.small}
 For every $n$, $p$, $\s^2$, $C$, $\d$ and $\b$, we have 
  \begin{equation*}
   \inf_{\l\in\croch{0,n^{-2\b}}} \set{R(n,p,\s^2,\l,\b,\d,C)} \geq \frac{\s^2}{4p} \pt
  \end{equation*}
\end{property}

\begin{proof}
 For every $\l\in\croch{0,n^{-2\b}}$ we have 
  \begin{align*}
    R(n,p,\s^2,\l,\b,\d,C) &\geq  \frac{\s^2}{np} \sum_{i=1}^n \frac{1}{\left(1+\l i^{2\b}\right)^2} \\
			   &\geq  \frac{\s^2}{p}\times\frac{1}{n} \sum_{i=1}^n \frac{1}{\left(1+ \paren{\frac{i}{n}}^{2\b}\right)^2} \\
                           &\geq  \frac{\s^2}{4p} \pt
  \end{align*}
\end{proof}

\subsection{Control of the risk on $\croch{n^{-2\b},\e\paren{\frac{np}{\s^2}}}$ \label{sec.proof.prop.min.risk2}}

\begin{property}\label{prop.min.risk.l.large}
 There exists an integer $N$ and a constant $\a\in(0,1)$ such that for every $(n,p,\s^2)$ such that $np/\s^2 \geq N$, every $(\b,\d)\in\R_+^2$ such that $4\b>2\d>1$ and every $\l \in [n^{-2\b},\e\paren{\frac{np}{\s^2}}]$ we have 
 \begin{equation} \label{eq.min.R}
  R(n,p,\s^2,\l,\b,\d,C) \geq \a \paren{\frac{CI_1}{\b }\l^{(2\d-1)/2\b} + \frac{\s^2I_2}{2\b np}\l^{-1/2\b}} \pt
 \end{equation}

\end{property}

\begin{proof}
 We seek to minor the two sums composing $R$, which was definded in Equation~\eqref{eq.def.R}, by their integral counterparts, uniformly on $[n^{-2\b},\e\paren{\frac{np}{\s^2}}]$. The technical details are exposed in Lemmas~\ref{lemma.S2} and \ref{lemma.S1}.

 For the first sum, using Lemma~\ref{lemma.S2}, we have that
\begin{align*}
 \sum_{i=1}^n\frac{1}{(1+\l i^{2\b})^2} &\geq  \int_0^{n+1}  \frac{1}{(1+\l t^{2\b})^2} dt - \int_0^{1}  \frac{1}{(1+\l t^{2\b})^2} dt \\
                                        &\geq \int_0^{+\infty}  \frac{1}{(1+\l t^{2\b})^2} dt - \int_{n+1}^{+\infty}  \frac{1}{(1+\l t^{2\b})^2} dt -\int_0^{1}  \frac{1}{(1+\l t^{2\b})^2} dt\pt \\
\end{align*}
 
First, with the change of variables  $u = \l t^{2\b}$ \citep[as in][]{bach2012_column_sampling},
 \begin{align*}
   \int_{n+1}^{+\infty}\frac{1}{(1+\l t^{2\b})^2} dt &= \int_0^{+\infty}  \frac{1}{(1+\l t^{2\b})^2} dt \frac{  \int_{n+1}^{+\infty}\frac{1}{(1+\l t^{2\b})^2}dt}{ \int_0^{+\infty}  \frac{1}{(1+\l t^{2\b})^2} dt } \\
                                            &= \int_0^{+\infty}  \frac{1}{(1+\l t^{2\b})^2} dt \frac{\int_{\l(n+1)^{2\b}}^{+\infty} \frac{u^{\frac{1}{2\b}-1}}{(1+u)^2}du}{\int_{0}^{+\infty} \frac{u^{\frac{1}{2\b}-1}}{(1+u)^2}du} \\
					    &\leq \int_0^{+\infty}  \frac{1}{(1+\l t^{2\b})^2} dt \frac{\int_{1}^{+\infty} \frac{u^{\frac{1}{2\b}-1}}{(1+u)^2}du}{\int_{0}^{+\infty} \frac{u^{\frac{1}{2\b}-1}}{(1+u)^2}du} \virg
 \end{align*}
 since $\l \geq n^{-2\b}$.

 We also have , with the change of variables  $u = \l t^{2\b}$ \citep[as in][]{bach2012_column_sampling},
 \begin{align*}
  \int_0^{1}  \frac{1}{(1+\l t^{2\b})^2} dt &= \int_0^{+\infty}  \frac{1}{(1+\l t^{2\b})^2} dt \frac{\int_0^{1}  \frac{1}{(1+\l t^{2\b})^2}dt}{ \int_0^{+\infty}  \frac{1}{(1+\l t^{2\b})^2} dt } \\
                                            &= \int_0^{+\infty}  \frac{1}{(1+\l t^{2\b})^2} dt \frac{\int_{0}^{\l} \frac{u^{\frac{1}{2\b}-1}}{(1+u)^2}du}{\int_{0}^{+\infty} \frac{u^{\frac{1}{2\b}-1}}{(1+u)^2}du} \\
					    &\leq \int_0^{+\infty}  \frac{1}{(1+\l t^{2\b})^2} dt \frac{\int_{0}^{\e} \frac{u^{\frac{1}{2\b}-1}}{(1+u)^2}du}{\int_{0}^{+\infty} \frac{u^{\frac{1}{2\b}-1}}{(1+u)^2}du} \pt
 \end{align*}
 Since $\e$, which was defined in Equation~\eqref{eq.def.epsilon}, verifies $\e(x) \longrightarrow 0$ as $x \longrightarrow +\infty$, we get 
 \begin{equation*}
  \frac{\int_{0}^{\e(x)} \frac{u^{\frac{1}{2\b}-1}}{(1+u)^2}du}{\int_{0}^{+\infty} \frac{u^{\frac{1}{2\b}-1}}{(1+u)^2}du} \converge_{x\to+\infty} 0 \pt
 \end{equation*}
 All those arguments imply that there exists an integer $n_1$ and real number $c_1 \in (0,1)$ such that, for every $(n,p,\s^2)$ such that $np/\s^2 \geq n_3$ and for every $\l \in [n^{-2\b},\e\paren{\frac{np}{\s^2}}]$,  
\begin{equation*}
 \sum_{i=1}^n\frac{1}{(1+\l i^{2\b})^2} \geq  c_1 \int_0^{+\infty}  \frac{1}{(1+\l t^{2\b})^2} dt\pt
\end{equation*}

For the second sum we carry a similar analysis, using Lemma~\ref{lemma.S1} instead of Lemma~\ref{lemma.S2}. First, supposing that $4\b > 2\d$, we know that 
\begin{equation*}
   \frac{\left\lfloor\paren{\frac{4\b-2\d}{2\d\l}}^{1/2\b}\right\rfloor}{\paren{\frac{4\b-2\d}{2\d\l}}^{1/2\b}} \converge_{\l\to 0} 1 \pt
 \end{equation*}
Since $\e(np/\s^2)$ goes to $0$ as $ np/\s^2$ goes to $+\infty$. Consequently, let $\zeta >0$ and $n_3$ be an integer such that for every $(n,p,\s^2)$ such that $np/\s^2 \geq n_3$,  and every $\l \in [n^{-2\b},\e\paren{\frac{np}{\s^2}}]$, we have 
\begin{equation*}
   \absj{\frac{\left\lfloor\paren{\frac{4\b-2\d}{2\d\l}}^{1/2\b}\right\rfloor}{\paren{\frac{4\b-2\d}{2\d\l}}^{1/2\b}} -1} < \zeta \et \absj{\frac{\left\lfloor\paren{\frac{4\b-2\d}{2\d\l}}^{1/2\b}\right\rfloor+2}{\paren{\frac{4\b-2\d}{2\d\l}}^{1/2\b}} -1} < \zeta \pt
 \end{equation*}
Consequently, for every $(n,p,\s^2)$ such that $np/\s^2 \geq n_3$  and every $\l \in [n^{-2\b},\e\paren{\frac{np}{\s^2}}]$, we have (with $t^{\star} = \paren{\frac{4\b-2\d}{2\d\l}}^{1/2\b}$ and $n^{\star} = \lfloor  t^{\star} \rfloor$) : 
\begin{equation*}
 n^{\star} \geq (1-\zeta) \paren{\frac{4\b-2\d}{2\d\l}}^{1/2\b} = z_1 ~\textrm{ and }~ n^{\star}+2 \leq (1+\zeta) \paren{\frac{4\b-2\d}{2\d\l}}^{1/2\b} = z_2\pt
\end{equation*}
We can remark that $\l z_1^{2\b}$ and $\l z_2^{2\b}$ do not depend on $\l$. Consequently, for every $(n,p,\s^2)$ such that $np/\s^2 \geq n_3$  and every $\l \in [n^{-2\b},\e\paren{\frac{np}{\s^2}}]$, we get 
\begin{equation*}
 \int_{n^{\star}}^{n^{\star}+2} \frac{t^{4\b-2\d}}{(1+\l t^{2\b})^2} dt \leq \int_{z_1}^{z_2} \frac{t^{4\b-2\d}}{(1+\l t^{2\b})^2} dt \pt
\end{equation*}
We finally see that 
\begin{align*}
 \int_{z_1}^{z_2} \frac{t^{4\b-2\d}}{(1+\l t^{2\b})^2} dt &=  \int_0^{+\infty}  \frac{t^{4\b-2\d}}{(1+\l t^{2\b})^2} dt \frac{\int_{z_1}^{z_2} \frac{t^{4\b-2\d}}{(1+\l t^{2\b})^2} dt}{  \int_0^{+\infty}  \frac{t^{4\b-2\d}}{(1+\l t^{2\b})^2} dt}\\
                                                          &= \int_0^{+\infty}  \frac{t^{4\b-2\d}}{(1+\l t^{2\b})^2} dt \frac{\int_{\l z_1^{2\b}}^{\l z_2^{2\b}} \frac{u^{\frac{1-2\d}{2\b}+1}}{(1+u)^2}du}{\int_0^{+\infty} \frac{u^{\frac{1-2\d}{2\b}+1}}{(1+u)^2}du}\\
                                                          &= c_3 \int_0^{+\infty}  \frac{t^{4\b-2\d}}{(1+\l t^{2\b})^2} dt \virg
\end{align*}
with 
\begin{equation*}
 c_3 =  \frac{\int_{\l z_1^{2\b}}^{\l z_2^{2\b}} \frac{u^{\frac{1-2\d}{2\b}+1}}{(1+u)^2}du}{\int_0^{+\infty} \frac{u^{\frac{1-2\d}{2\b}+1}}{(1+u)^2}du} \in (0,1)
\end{equation*}
being independent of $\l$ and arbitrarily close to 0. Thus, we have that, using Lemma~\ref{lemma.S1}, 
\begin{itemize}
 \item if $n^{\star} \geq n-1$: 
  \begin{align*}
   \sum_{i=1}^n\frac{i^{4\b-2\d}}{(1+\l i^{2\b})^2} &\geq  \int_0^{n} \frac{t^{4\b-2\d}}{(1+\l t^{2\b})^2} dt \\
                                                    &\geq  \int_0^{+\infty} \frac{t^{4\b-2\d}}{(1+\l t^{2\b})^2} dt - \int_n^{+\infty} \frac{t^{4\b-2\d}}{(1+\l t^{2\b})^2} dt \ptvirg
  \end{align*}
 \item if $n^{\star} < n-1$ and $np/\s^2 \geq n_3$:
  \begin{align*}
   &\sum_{i=1}^n\frac{i^{4\b-2\d}}{(1+\l i^{2\b})^2} \\
    \geq~~&  \int_0^{n} \frac{t^{4\b-2\d}}{(1+\l t^{2\b})^2} dt -c_3 \int_0^{+\infty} \frac{t^{4\b-2\d}}{(1+\l t^{2\b})^2} dt \\
                                                    \geq~~&  \int_0^{+\infty} \frac{t^{4\b-2\d}}{(1+\l t^{2\b})^2} dt - \int_n^{+\infty} \frac{t^{4\b-2\d}}{(1+\l t^{2\b})^2} dt -c_3 \int_0^{+\infty} \frac{t^{4\b-2\d}}{(1+\l t^{2\b})^2} dt \pt
  \end{align*}
\end{itemize}
With the change of variables  $u = \l t^{2\b}$ \citep[as in][]{bach2012_column_sampling},
 \begin{align*}
   \int_{n}^{+\infty}\frac{1}{(1+\l t^{2\b})^2} dt &= \int_0^{+\infty}  \frac{t^{4\b-2\d}}{(1+\l t^{2\b})^2} dt \frac{  \int_{n}^{+\infty}\frac{t^{4\b-2\d}}{(1+\l t^{2\b})^2}dt}{ \int_0^{+\infty}  \frac{t^{4\b-2\d}}{(1+\l t^{2\b})^2} dt } \\
                                            &= \int_0^{+\infty}  \frac{t^{4\b-2\d}}{(1+\l t^{2\b})^2} dt \frac{\int_{\l n^{2\b}}^{+\infty} \frac{u^{\frac{1-2\d}{2\b}+1}}{(1+u)^2}du}{\int_{0}^{+\infty} \frac{u^{\frac{1-2\d}{2\b}+1}}{(1+u)^2}du} \\
					    &\leq \int_0^{+\infty}  \frac{t^{4\b-2\d}}{(1+\l t^{2\b})^2} dt \frac{\int_{1}^{+\infty} \frac{u^{\frac{1-2\d}{2\b}+1}}{(1+u)^2}du}{\int_{0}^{+\infty} \frac{u^{\frac{1-2\d}{2\b}+1}}{(1+u)^2}du} \virg
 \end{align*}
 since $\l \geq n^{-2\b}$.
This implies that there exists an integer $n_2$ and real number $c_2 \in (0,1)$ such that, for every $(n,p,\s^2)$ such that $np/\s^2 \geq n_2$ and for every $\l \in [n^{-2\b},\e\paren{\frac{np}{\s^2}}]$,  
\begin{equation*}
 \sum_{i=1}^n\frac{i^{4\b-2\d}}{(1+\l i^{2\b})^2} \geq  c_2  \int_0^{+\infty}  \frac{t^{4\b-2\d}}{(1+\l t^{2\b})^2} dt\pt
\end{equation*}

By taking $N = \max(n_1,n_2)$ and $\a = \min(c_1,c_2)$, we have that for every $(n,p,\s^2)$ such that $np/\s^2 \geq N$ and every $\l \in [n^{-2\b},\e\paren{\frac{np}{\s^2}}]$ 
\begin{equation*}
 R(n,p,\s^2,\l,\b,\d,C) \geq \a \paren{\frac{CI_1}{2\b}\l^{(2\d-1)/2\b} + \frac{\s^2I_2}{2\b np}\l^{-1/2\b}} \pt
\end{equation*}
\end{proof}

\subsection{Proof of Property~\ref{prop.min.risk} \label{sec.proof.prop.min.risk}}

\begin{proof}
 This proof uses two results proved in Sections~\ref{sec.proof.prop.min.risk1} and \ref{sec.proof.prop.min.risk2} of the appendix. Property~\ref{prop.param.reg.maj} shows that $R$ attains its minimum on $\croch{0,\e\paren{\frac{np}{\s^2}}}$, where $\e(x)$ goes to $0$ as $x$ goes to $0$. First, Property~\ref{prop.min.risk.l.small} shows that 
  \begin{equation*}
   \inf_{\l\in\croch{0,n^{-2\b}}} \set{R(n,p,\s^2,\l,\b,\d,C)} \geq \frac{\s^2}{4p} \pt
  \end{equation*}
 Then, using Property~\ref{prop.min.risk.l.large} shows that there exists an integer $N$ and a constant $\a\in(0,1)$ such that for every $(n,p,\s^2)$ such that $\frac{np}{\s^2} \geq N$, every $(\b,\d)\in\R_+^2$ such that $4\b>2\d>1$ and every $\l \in [n^{-2\b},\e\paren{\frac{np}{\s^2}}]$ we have 
 \begin{equation} \label{eq.min.R}
  R(n,p,\s^2,\l,\b,\d,C) \geq \a \paren{\frac{CI_1}{\b }\l^{(2\d-1)/2\b} + \frac{\s^2I_2}{2\b np}\l^{-1/2\b}} \pt
 \end{equation}
 Thus, using the same analysis than for Property~\ref{prop.maj.risk}, we get 
  \begin{equation*}
   \inf_{\l\in\croch{n^{-2\b},\e\paren{\frac{np}{\s^2}}}} \set{R(n,p,\s^2,\l,\b,\d,C)} \geq  \a\paren{\frac{np}{\s^2}}^{1/2\d-1}C^{1/2\d}\kbd\pt
  \end{equation*}
\end{proof}

\subsection{Proof of Property~\ref{prop.param.reg.min} \label{sec.proof.prop.param.reg.min}}

The proof of Property~\ref{prop.maj.risk} clearly shows two regimes :
\begin{itemize}
 \item when $\l^{\star}_R \leq n^{-2\b}$, the multi-task risk is $\asymp\frac{\s^2}{p}$;
 \item when $\l^{\star}_R \geq n^{-2\b}$,  the multi-task risk is $\asymp\paren{\frac{\s^2}{np}}^{1-1/2\d}$.
\end{itemize}

We now show that if $\l$ is too close to zero then the variance term exceeds the upper bound on  $R^{\star}(n,p,\s^2,\b,\d,C)$ given in Equation~\eqref{eq.maj.R.opt}.

\begin{proof}
 Let us denote 
 \begin{equation*}
   m_1 = \inf_{\l\in\croch{0,n^{-2\b}}} \set{R(n,p,\s^2,\l,\b,\d,C)}
 \end{equation*}
 and 
 \begin{equation*}
  m_2 =  \inf_{\l\in\croch{n^{-2\b},\e\paren{\frac{np}{\s^2}}}} \set{R(n,p,\s^2,\l,\b,\d,C)} \pt
 \end{equation*}
If $m_1 < m_2$  then $\l^{\star}_R \leq \frac{1}{n^{2\b}}$, else $\l^{\star}_R \geq \frac{1}{n^{2\b}}$. Under the present assumptions, we can use the proof Property~\ref{prop.min.risk} and state that there exists an integer $N_1$ and a constant $\a\in(0,1)$ such that 
 \begin{equation*}
  \frac{\s^2}{p} \geq m_1 \geq \frac{\s^2}{4p} \virg
 \end{equation*}
 and 
  \begin{equation*}
   2^{1/2\d}\paren{\frac{np}{\s^2}}^{1/2\d-1}C^{1/2\d}\kbd \geq m_2 \geq  \a\paren{\frac{np}{\s^2}}^{1/2\d-1}C^{1/2\d}\kbd\pt
  \end{equation*}
 Both assumptions  $n^{2\d-1}  \times \frac{\s^2}{p} \longrightarrow 0$ and  $n^{2\d-1} \times \frac{\s^2}{p} \longrightarrow +\infty$ ensure that either $m_1>m_2$ or $m_2<m_1$ asymptotically hold.  
\end{proof}

\section{Study of the different multi-task hypotheses}

\begin{lemma}\label{lemma.risk.2Points}
 Under Assupmtion~\eqref{hyp.AV}, Assumption \eqref{2Points} is equivalent to 
 \begin{equation*} 	
  \begin{split}
    \exists (\e_i)_{i\in\NN}\in\set{-1,1}^{\NN},~ \forall i \in \setn,~~~~~~~~~~~~~~~~~~~~~~~~~~~~~~~~~~~~~~~~~~~~~~ \\ \left\{ \begin{array}{rcl} \forall j \in \set{1,\dots,\frac{p}{2}},~ h_i^j&=& \sqrt{n}i^{-\d}(\sqrt{C_1}+\e_i\sqrt{C_2}) \\ \forall j \in \set{\frac{p}{2}+1,\dots,p},~ h_i^j &=& \sqrt{n}i^{-\d}(\sqrt{C_1}-\e_i\sqrt{C_2}) \end{array} \right. \pt
  \end{split}
  \end{equation*}
 The risk of the estimator $\fh^j_{\l}=A_{\l}y^j$ for the $j$th task, which we denote by $R^j(\l)$, verifies 
 \begin{equation*}
  R(n,1,\s^2,\l,\b,\d,\paren{\sqrt{C_1}-\sqrt{C_2}}^2) \leq R^j(\l) 
 \end{equation*}
 and 
 \begin{equation*}
    R^j(\l) \leq  R(n,1,\s^2,\l,\b,\d,\paren{\sqrt{C_1}+\sqrt{C_2}}^2) \pt
 \end{equation*}
\end{lemma}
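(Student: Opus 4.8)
The plan is to treat the two assertions separately: first establish the claimed equivalent description of \eqref{2Points} under \eqref{hyp.AV}, and then use it to sandwich the single-task risk $R^j(\l)$ between two instances of the function $R$.

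For the equivalence, I would start from \eqref{2Points}, which forces $(h_i^j)_j$ to take only two values for each fixed $i$: set $a_i = h_i^1 = \dots = h_i^{p/2}$ and $b_i = h_i^{p/2+1} = \dots = h_i^p$. Inserting these into the definitions of $\mu_i$ and $\varsigma_i^2$ gives $\mu_i/\sqrt{p} = (a_i+b_i)/2$ and $\varsigma_i^2 = (a_i-b_i)^2/4$, hence $\varsigma_i = \sabsj{a_i-b_i}/2$. Assumption \eqref{hyp.AV} then pins down $(a_i+b_i)/2 = \sqrt{n}i^{-\d}\sqrt{C_1}$ and $\sabsj{a_i-b_i}/2 = \sqrt{n}i^{-\d}\sqrt{C_2}$. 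Solving for $a_i$ and $b_i$ and introducing $\e_i\in\set{-1,1}$ for the sign of $a_i-b_i$ yields exactly the displayed form. Conversely, that form immediately gives $h_i^1 = \dots = h_i^{p/2}$ and $h_i^{p/2+1} = \dots = h_i^p$, i.e.\ \eqref{2Points}, and a direct recomputation of $\mu_i$ and $\varsigma_i$ confirms its compatibility with \eqref{hyp.AV}. This part is a routine computation.

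For the risk bounds, I would write out $R^j(\l)$ using the eigendecomposition of $K$ from Section~\ref{sec.risk}, specialized to a single task. Since $\fh^j_{\l} = A_{\l}y^j$ with $A_{\l} = K(K+n\l I_n)^{-1}$, the bias-variance decomposition together with $\g_i = ni^{-2\b}$ from \eqref{hyp.K} gives
\begin{equation*}
 R^j(\l) = \frac{1}{n}\norr{(A_{\l} - I_n)f^j}^2 + \frac{\s^2}{n}\tr(A_{\l}\trsp A_{\l}) = \l^2\sum_{i=1}^n \frac{i^{4\b}\,(h_i^j)^2/n}{(1+\l i^{2\b})^2} + \frac{\s^2}{n}\sum_{i=1}^n \frac{1}{(1+\l i^{2\b})^2} \pt
\end{equation*}
From the first part, $(h_i^j)^2/n = i^{-2\d}\paren{\sqrt{C_1}+\e_i\sqrt{C_2}}^2$ (with $-\e_i$ in place of $\e_i$ when $j>p/2$), and since $\e_i=\pm1$ this quantity always equals one of $i^{-2\d}\paren{\sqrt{C_1}\pm\sqrt{C_2}}^2$.

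The final step exploits that the bias sum consists of nonnegative terms, each monotone in $(h_i^j)^2$. Using the termwise inequalities $\paren{\sqrt{C_1}-\sqrt{C_2}}^2 \leq \paren{\sqrt{C_1}+\e_i\sqrt{C_2}}^2 \leq \paren{\sqrt{C_1}+\sqrt{C_2}}^2$, I would replace $(h_i^j)^2/n$ by the constant $i^{-2\d}\paren{\sqrt{C_1}\pm\sqrt{C_2}}^2$ in every term while leaving the variance term untouched; comparison with definition~\eqref{eq.def.R} at $p=1$ then yields
\begin{equation*}
 R(n,1,\s^2,\l,\b,\d,\paren{\sqrt{C_1}-\sqrt{C_2}}^2) \leq R^j(\l) \leq R(n,1,\s^2,\l,\b,\d,\paren{\sqrt{C_1}+\sqrt{C_2}}^2) \pt
\end{equation*}
The only genuine subtlety, and the reason $R^j$ is bracketed rather than equal to a single $R$, is the per-coordinate sign $\e_i$: it prevents $(h_i^j)^2$ from being a fixed multiple of $ni^{-2\d}$, so one cannot write $R^j(\l)$ as one instance of $R$ and must instead bound it termwise between the two extreme choices of sign.
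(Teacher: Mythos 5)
Your proof is correct and takes essentially the same route as the paper: it first solves the two-valued system for $h_i^1$ and $h_i^p$ in terms of $\mu_i/\sqrt{p}$ and $\varsigma_i$ to obtain the $\e_i$-parametrization, then writes $R^j(\l)$ via the bias--variance decomposition with $\g_i = ni^{-2\b}$ and bounds the bias sum termwise using $\paren{\sqrt{C_1}-\sqrt{C_2}}^2 \leq \paren{\sqrt{C_1}+\e_i\sqrt{C_2}}^2 \leq \paren{\sqrt{C_1}+\sqrt{C_2}}^2$, leaving the variance term untouched, exactly as in the paper's proof. Like the paper, you implicitly take the positive root $\mu_i/\sqrt{p} = \sqrt{C_1 n}\,i^{-\d}$ permitted by \eqref{hyp.AV}, which is harmless here since only $(h_i^j)^2$ enters the risk.
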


\begin{proof}
 We have that, for every $i\in\setn$
 \begin{align*}
  &\left\{ \begin{array}{rcl} \frac{\mu_i}{\sqrt{p}} &=& \frac{1}{2} h_i^1 + \frac{1}{2}h_i^p \\ \varsigma_i^2 &=& \frac{1}{2} \paren{h_i^1- \frac{\mu_i}{\sqrt{p}}}^2 + \frac{1}{2} \paren{h_i^p-\frac{\mu_i}{\sqrt{p}}}^2 \end{array}\right. \\
 \Leftrightarrow &\left\{\begin{array}{rcl} h_i^p &=& 2\frac{\mu_i}{\sqrt{p}} - h_i^1 \\ \varsigma_i^2 &=& \frac{1}{2} \paren{h_i^1-\frac{\mu_i}{\sqrt{p}}}^2 + \frac{1}{2} \paren{2\frac{\mu_i}{\sqrt{p}} - h_i^1-\frac{\mu_i}{\sqrt{p}}}^2 \end{array}\right. \\
 \Leftrightarrow &\left\{\begin{array}{rcl} h_i^p &=& 2\mu_i - h_i^1 \\ \varsigma_i^2 &=&  \paren{h_i^1-\mu_i}^2  \end{array}\right. \\
\end{align*}
This is equivalent to  $h_i^1 = \frac{\mu_i}{\sqrt{p}} +  \varsigma_i$ and $h_i^p = \frac{\mu_i}{\sqrt{p}} -  \varsigma_i$. Thus, the first point is proved. For the second point, let $j\in\setp$. There exists $(\e_i)_{i\in\NN}\in\set{-1,1}^{\NN}$ such that $(h_i^j)^2 = ni^{-2\d} \paren{\sqrt{C_1}+\e_i\sqrt{C_2}}^2$.
The risk of $\fh^j_{\l}$ then is 
\begin{equation*}
 \l^2 \sum_{i=1}^n\frac{i^{4\b-2\d}\paren{\sqrt{C_1}+\e_i\sqrt{C_2}}^2}{(1+\l i^{2\b})^2}+ \frac{\s^2}{n} \sum_{i=1}^n \frac{1}{\left(1+\l i^{2\b}\right)^2} \pt
\end{equation*}
We conclude by seeing that, for every $\e\in\set{-1,1}$, we have $\paren{\sqrt{C_1}-\sqrt{C_2}}^2 \leq \paren{\sqrt{C_1}+\e\sqrt{C_2}}^2\leq \paren{\sqrt{C_1}+\sqrt{C_2}}^2$
\end{proof}

\begin{lemma}\label{lemma.risk.1Out}
 Under Assupmtion~\eqref{hyp.AV}, Assumption \eqref{1Out} is equivalent to 
 \begin{equation*} 
   \begin{split}
      \exists (\e_i){i\in\NN}\in\set{-1,1}^{\NN},~ \forall i \in \setn, ~~~~~~~~~~~~~~~~~~~~~~~~~~~~~~~~~~~~~~~~~~~~~~\\ \left\{ \begin{array}{rcl} \forall j \in \set{1,\dots,p-1},~ h_i^j&=& \sqrt{n}i^{-\d}\paren{\sqrt{C_1} + \e_i\sqrt{\frac{C_2}{p-1}}} \\  h_i^p &=& \sqrt{n}i^{-\d}(\sqrt{C_1} -\e_i\sqrt{(p-1)C_2}) \end{array} \right. \pt
    \end{split}
  \end{equation*}
 If $j\in\set{1,\dots,p-1}$, the risk of the estimator $\fh^j_{\l}=A_{\l}y^j$ for the $j$th task, which we denote by $R^j(\l)$, verifies 
 \begin{equation*}
  R(n,1,\s^2,\l,\b,\d,\paren{\sqrt{C_1}-\sqrt{\frac{C_2}{p-1}}}^2) \leq R^j(\l) 
 \end{equation*}
  and
 \begin{equation*}
   R^j(\l) \leq  R(n,1,\s^2,\l,\b,\d,\paren{\sqrt{C_1}+\sqrt{\frac{C_2}{p-1}}}^2) \virg
 \end{equation*}
 while the risk of the estimator $\fh^p_{\l}=A_{\l}y^p$ for the $p$th task, which is denoted by $R^p(\l)$, verifies 
 \begin{equation*}
  R(n,1,\s^2,\l,\b,\d,\paren{\sqrt{C_1}-\sqrt{(p-1)C_2}}^2) \leq R^p(\l)
 \end{equation*}
  and
 \begin{equation*}
  R^p(\l) \leq  R(n,1,\s^2,\l,\b,\d,\paren{\sqrt{C_1}+\sqrt{(p-1)C_2}}^2) \pt
 \end{equation*}
\end{lemma}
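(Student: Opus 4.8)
The plan is to follow exactly the template of the proof of Lemma~\ref{lemma.risk.2Points}, adapting it to the asymmetric ``one cluster plus one outlier'' partition. First I would establish the claimed equivalence by inverting, for each fixed $i$, the map sending $(h_i^1,\dots,h_i^p)$ to $(\mu_i,\varsigma_i)$ under Assumption~\eqref{1Out}. Writing $a \egaldef h_i^1 = \dots = h_i^{p-1}$ and $b \egaldef h_i^p$, Assumption~\eqref{1Out} collapses the two defining sums to
\begin{equation*}
 \frac{\mu_i}{\sqrt{p}} = \frac{(p-1)a + b}{p} \et \varsigma_i^2 = \frac{p-1}{p}\paren{a - \frac{\mu_i}{\sqrt{p}}}^2 + \frac{1}{p}\paren{b-\frac{\mu_i}{\sqrt{p}}}^2 \virg
\end{equation*}
a two-by-two system in $(a,b)$.

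Next I would solve this system. Substituting $a - \mu_i/\sqrt{p} = (a-b)/p$ and $b - \mu_i/\sqrt{p} = (p-1)(b-a)/p$ into the expression for the variance, the two contributions combine into the single identity $\varsigma_i^2 = \frac{p-1}{p^2}(a-b)^2$, so that $\absj{a-b} = \frac{p}{\sqrt{p-1}}\varsigma_i$. Introducing the sign $\e_i \egaldef \mathrm{sign}(a-b) \in \set{-1,1}$ then yields $a = \frac{\mu_i}{\sqrt{p}} + \frac{\e_i}{\sqrt{p-1}}\varsigma_i$ and $b = \frac{\mu_i}{\sqrt{p}} - \sqrt{p-1}\,\e_i\varsigma_i$. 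Inserting the explicit values $\mu_i/\sqrt{p} = \sqrt{n}i^{-\d}\sqrt{C_1}$ and $\varsigma_i = \sqrt{n}i^{-\d}\sqrt{C_2}$ supplied by Assumption~\eqref{hyp.AV} gives precisely the asserted form of $h_i^1$ and $h_i^p$; the converse (that this form implies both \eqref{1Out} and \eqref{hyp.AV}) is immediate by direct substitution, which completes the equivalence.

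For the risk bounds I would expand $R^j(\l)$ on the eigenbasis of $K$ using the single-task bias-variance decomposition of Section~\ref{sec.risk} with $p=1$ and noise $\s^2$. Under Assumption~\eqref{hyp.K} the variance term equals $\frac{\s^2}{n}\sum_{i=1}^n (1+\l i^{2\b})^{-2}$, which does not involve the signal and is therefore common to all three functions appearing in the sandwich. The bias term is $\l^2 \sum_{i=1}^n \frac{i^{4\b-2\d}\,(h_i^j)^2/(ni^{-2\d})}{(1+\l i^{2\b})^2}$, in which for $1 \leq j \leq p-1$ the factor $(h_i^j)^2/(ni^{-2\d}) = \paren{\sqrt{C_1}+\e_i\sqrt{\frac{C_2}{p-1}}}^2$. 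Since $\e_i \in \set{-1,1}$, the elementary inequality $\paren{\sqrt{C_1}-\sqrt{\frac{C_2}{p-1}}}^2 \leq \paren{\sqrt{C_1}+\e_i\sqrt{\frac{C_2}{p-1}}}^2 \leq \paren{\sqrt{C_1}+\sqrt{\frac{C_2}{p-1}}}^2$ holds for every $i$, so bounding the bias summand by summand and re-adding the common variance yields the two-sided bound for $R^j(\l)$. The bound for $R^p(\l)$ follows identically, replacing $\e_i$ by $-\e_i$ (again a sign in $\set{-1,1}$) and $\sqrt{C_2/(p-1)}$ by $\sqrt{(p-1)C_2}$.

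The only genuinely new work compared with Lemma~\ref{lemma.risk.2Points} is the inversion of the asymmetric linear system in the first two paragraphs, where the unequal weights $p-1$ and $1$ make the identity $\varsigma_i^2 = \frac{p-1}{p^2}(a-b)^2$ less transparent than its symmetric counterpart; once this identity is in hand, the remainder is the same term-by-term comparison argument as before. I expect this algebraic inversion to be the main, though still routine, obstacle.
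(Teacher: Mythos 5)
Your proposal is correct and follows essentially the same route as the paper: the paper's own proof inverts the same two-equation system (solving $h_i^p = p\frac{\mu_i}{\sqrt{p}}-(p-1)h_i^1$ and reducing the variance identity to $\varsigma_i^2=(p-1)\paren{h_i^1-\frac{\mu_i}{\sqrt{p}}}^2$, which is algebraically the same as your $\varsigma_i^2=\frac{p-1}{p^2}(a-b)^2$), and the risk sandwich is the same term-by-term comparison of the bias coefficients $\paren{\sqrt{C_1}\pm\e_i\sqrt{\cdot}}^2$ used in the paper's proof of Lemma~\ref{lemma.risk.2Points}, which the paper leaves implicit here. Your verification of the converse by direct substitution likewise matches Lemma~\ref{lemma.hyp.21Out}.
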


\begin{proof}
  For the first part, we  have that, for every $i\in\setn$
 \begin{align*}
  &\left\{ \begin{array}{rcl} \frac{\mu_i}{\sqrt{p}} &=& \frac{p-1}{p} h_i^1 + \frac{1}{p}h_i^p \\ \varsigma_i^2 &=& \frac{p-1}{p} \paren{h_i^1-\frac{\mu_i}{\sqrt{p}}}^2 + \frac{1}{p} \paren{h_i^p-\frac{\mu_i}{\sqrt{p}}}^2 \end{array}\right. \\
 \Leftrightarrow &\left\{\begin{array}{rcl} h_i^p &=& p\frac{\mu_i}{\sqrt{p}} - (p-1)h_i^1 \\ \varsigma_i^2 &=& \frac{p-1}{p} \paren{h_i^1-\frac{\mu_i}{\sqrt{p}}}^2 + \frac{1}{p} \paren{p\frac{\mu_i}{\sqrt{p}} - (p-1)h_i^1-\frac{\mu_i}{\sqrt{p}}}^2 \end{array}\right. \\
 \Leftrightarrow &\left\{\begin{array}{rcl} h_i^p &=& p\frac{\mu_i}{\sqrt{p}} - (p-1)h_i^1 \\ \varsigma_i^2 &=& \frac{p-1}{p} \paren{h_i^1-\frac{\mu_i}{\sqrt{p}}}^2 + \frac{(p-1)^2}{p} \paren{h_i^1-\frac{\mu_i}{\sqrt{p}}}^2 \end{array}\right. \\
 \Leftrightarrow &\left\{\begin{array}{rcl} h_i^p &=& p\frac{\mu_i}{\sqrt{p}} - (p-1)h_i^1 \\ \varsigma_i^2 &=& (p-1) \paren{h_i^1-\frac{\mu_i}{\sqrt{p}}}^2  \end{array}\right. \\
\end{align*}
This is equivalent to saying that there exists   $(\e_i)_{i\in\NN}\in\set{-1,1}^{\NN}$ such that 
\begin{align*}
 &\left\{\begin{array}{rcl} h_i^p &=& p\frac{\mu_i}{\sqrt{p}} - (p-1)h_i^1 \\ h_i^1 &=& \frac{\mu_i}{\sqrt{p}} + \frac{\e_i}{\sqrt{p-1}}\varsigma_i  \end{array}\right. \\
 \Leftrightarrow &\left\{\begin{array}{rcl}  h_i^1 &=& \frac{\mu_i}{\sqrt{p}} + \frac{\e_i}{\sqrt{p-1}}\varsigma_i  \\ h_i^p &=& \frac{\mu_i}{\sqrt{p}} -\e_i\sqrt{p-1}\varsigma_i \end{array}\right.
\end{align*}
\end{proof}

\begin{lemma}\label{lemma.hyp.2Points}
 Assumption \eqref{hyp.2Points} implies Assumption \eqref{hyp.AV}.
\end{lemma}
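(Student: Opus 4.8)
The plan is to verify \eqref{hyp.AV} by direct substitution of the explicit coordinates supplied by \eqref{hyp.2Points}. First I would record the elementary fact (already noted before the statement) that \eqref{hyp.2Points} forces \eqref{2Points}, so that for each $i\in\setn$ the $p$ coordinates $(h_i^j)_{j=1}^p$ take only two values, each on a group of size $p/2$: the value $\sqrt{n}i^{-\d}(\sqrt{C_1}+\sqrt{C_2})$ for $1\leq j\leq p/2$, and the value $\sqrt{n}i^{-\d}(\sqrt{C_1}-\sqrt{C_2})$ for $p/2+1\leq j\leq p$. This reduces everything to a finite sum over two balanced groups.

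Next I would compute the mean coordinate $\mu_i/\sqrt{p}=p^{-1}\sum_{j=1}^p h_i^j$. Because the two groups have the same cardinality $p/2$, the contributions carrying $\sqrt{C_2}$ cancel and one is left with $\mu_i/\sqrt{p}=\sqrt{n}i^{-\d}\sqrt{C_1}$, whence $\mu_i^2/p = C_1 n i^{-2\d}$, which is exactly the first line of \eqref{hyp.AV}.

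Finally I would substitute this value into $\varsigma_i^2 = p^{-1}\sum_{j=1}^p (h_i^j-\mu_i/\sqrt{p})^2$. Each deviation equals $\pm\sqrt{n}i^{-\d}\sqrt{C_2}$ according to the group, so every squared deviation equals $C_2 n i^{-2\d}$; averaging over the $p$ tasks then gives $\varsigma_i^2 = C_2 n i^{-2\d}$, the second line of \eqref{hyp.AV}. The computation carries no genuine difficulty, and I expect no real obstacle: the only point to keep track of is that the equal size $p/2$ of the two groups is precisely what makes the $\sqrt{C_2}$ terms cancel in the mean and what makes all squared deviations collapse to a single value, so the argument would rely essentially on the two groups being balanced.
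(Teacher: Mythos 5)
Your proposal is correct and follows essentially the same route as the paper's proof: direct substitution of the explicit coordinates into the definitions of $\mu_i$ and $\varsigma_i^2$, with the balanced group sizes $p/2$ causing the $\sqrt{C_2}$ terms to cancel in the mean and every squared deviation to equal $C_2 n i^{-2\d}$. No gap to report.
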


\begin{proof}
 For every $i\in\setn$, we suppose we have 
 \begin{equation*}
  \left\{ \begin{array}{rcl} h_i^1 &=& \sqrt{n}i^{-\d}(\sqrt{C_1}+\sqrt{C_2}) \\ h_i^p &=& \sqrt{n}i^{-\d}(\sqrt{C_1}-\sqrt{C_2}) \end{array} \right. \pt
 \end{equation*}
 Thus, 
 \begin{equation*}
  \mu_i = \frac{1}{\sqrt{p}}\sum_{j=1}^p h_i^j = \frac{\sqrt{p}}{2}\paren{h_i^1+h_i^p} = \sqrt{p}\times \sqrt{n}i^{-\d}\sqrt{C_1} \virg
 \end{equation*}
 so that $\mu_i^2 = pC_1 n i^{-2\d}$. Furthermore, 
 \begin{equation*}
  \varsigma_i^2 = \frac{1}{p}\sum_{j=1}^p\paren{h_i^j - \frac{\mu_i}{\sqrt{p}}}^2 = \frac{1}{p}\sum_{j=1}^p\paren{\sqrt{n}i^{-\d}\sqrt{C_2}}^2 = C_2ni^{-2\d} \pt
 \end{equation*}

\end{proof}

\begin{lemma}\label{lemma.hyp.21Out}
 Assumption \eqref{hyp.1Out} implies Assumption \eqref{hyp.AV}.
\end{lemma}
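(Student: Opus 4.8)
The plan is to mimic the proof of Lemma~\ref{lemma.hyp.2Points}, carrying out a direct computation of $\mu_i$ and $\varsigma_i^2$ from their definitions once the values $h_i^j$ are fixed by Assumption~\eqref{hyp.1Out}. Under \eqref{hyp.1Out} we have $h_i^1 = \dots = h_i^{p-1} = \sqrt{n}i^{-\d}\paren{\sqrt{C_1}+\frac{1}{\sqrt{p-1}}\sqrt{C_2}}$ and $h_i^p = \sqrt{n}i^{-\d}\paren{\sqrt{C_1}-\sqrt{p-1}\sqrt{C_2}}$, so everything reduces to summing $p-1$ identical cluster terms plus one outlier term.

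First I would compute the mean. Summing over $j$ gives $\sum_{j=1}^p h_i^j = (p-1)h_i^1 + h_i^p$, and the coefficients in \eqref{hyp.1Out} are chosen precisely so that the $\sqrt{C_2}$-contributions cancel, since $(p-1)\frac{1}{\sqrt{p-1}}\sqrt{C_2} - \sqrt{p-1}\sqrt{C_2} = 0$. Hence $\sum_{j=1}^p h_i^j = p\sqrt{C_1}\sqrt{n}i^{-\d}$, so that $\mu_i = \sqrt{p}\sqrt{C_1}\sqrt{n}i^{-\d}$, which yields $\frac{\mu_i^2}{p} = C_1 n i^{-2\d}$, the first line of \eqref{hyp.AV}.

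Next I would compute the variance using the centered form $\varsigma_i^2 = \frac{1}{p}\sum_{j=1}^p \paren{h_i^j - \frac{\mu_i}{\sqrt{p}}}^2$. Since $\frac{\mu_i}{\sqrt{p}} = \sqrt{C_1}\sqrt{n}i^{-\d}$, each of the $p-1$ inner-cluster terms satisfies $h_i^j - \frac{\mu_i}{\sqrt{p}} = \frac{\sqrt{C_2}}{\sqrt{p-1}}\sqrt{n}i^{-\d}$, contributing $\frac{C_2}{p-1}n i^{-2\d}$ apiece, while the outlier contributes $(p-1)C_2 n i^{-2\d}$. Adding these gives $(p-1)\cdot\frac{C_2}{p-1}n i^{-2\d} + (p-1)C_2 n i^{-2\d} = p C_2 n i^{-2\d}$, so $\varsigma_i^2 = C_2 n i^{-2\d}$, the second line of \eqref{hyp.AV}.

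There is no real obstacle here: the statement is a bookkeeping verification, and the only point requiring care is checking that the specific weights $\frac{1}{\sqrt{p-1}}$ and $\sqrt{p-1}$ produce exactly the cancellation in the mean and exactly the reweighting in the variance. These weights were reverse-engineered in Lemma~\ref{lemma.risk.1Out} from the constraints $h_i^1 = \frac{\mu_i}{\sqrt{p}}+\frac{\e_i}{\sqrt{p-1}}\varsigma_i$ and $h_i^p = \frac{\mu_i}{\sqrt{p}}-\e_i\sqrt{p-1}\varsigma_i$, so the computation is guaranteed to close.
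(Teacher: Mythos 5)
Your proof is correct and follows exactly the paper's own argument: a direct computation of $\mu_i$ from $\sum_{j=1}^p h_i^j = (p-1)h_i^1 + h_i^p$ (with the $\sqrt{C_2}$ terms cancelling) and of $\varsigma_i^2$ from the centered sum, yielding $\mu_i^2/p = C_1 n i^{-2\d}$ and $\varsigma_i^2 = C_2 n i^{-2\d}$ as in Assumption \HAV. The arithmetic in both steps checks out, including the final division by $p$ in the variance computation.
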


\begin{proof}
  For every $i\in\setn$, we suppose we have 
 \begin{equation*}
   \left \{ \begin{array}{rcl} h_i^1 &=& \sqrt{n}i^{-\d}\paren{\sqrt{C_1} + \frac{1}{\sqrt{p-1}}\sqrt{C_2}} \\
   h_i^p &=&  \sqrt{n}i^{-\d}\paren{\sqrt{C_1} -\sqrt{p-1}\sqrt{C_2}} \end{array} \right.\pt
 \end{equation*}
 Thus, 
 \begin{equation*}
  \mu_i = \frac{1}{\sqrt{p}} \sum_{j=1}^p h_i^j =  \frac{1}{\sqrt{p}}\paren{(p-1)h_i^1+h_i^p} = \sqrt{p}\times \sqrt{n}i^{-\d}\sqrt{C_1} \virg
 \end{equation*}
 so that $\mu_i^2 = pC_1 n i^{-2\d}$. Furthermore, 
  \begin{align*}
  \varsigma_i^2 &= \frac{1}{p}\sum_{j=1}^p\paren{h_i^j - \frac{\mu_i}{\sqrt{p}}}^2 \\
  &= \frac{1}{p}\croch{(p-1)\paren{\sqrt{n}i^{-\d}\frac{\sqrt{C_2}}{\sqrt{p-1}}}^2+\paren{\sqrt{p-1}\sqrt{n}i^{-\d}\sqrt{C_2}}^2} = C_2ni^{-2\d} \pt
 \end{align*}
\end{proof}

\end{document}